\documentclass[11pt]{amsart}

%=========
% Packages
%=========

\usepackage{
	amsmath,
	amsfonts,
	amssymb,
	amsthm,
	amscd,
	comment,
	enumitem,
	etoolbox,
	gensymb,
	mathtools,
	mathdots,
	stmaryrd,
}
\usepackage[usenames,dvipsnames]{xcolor}
\usepackage{todonotes}
\usepackage[all]{xy}
\usepackage[hyphens]{url}
\usepackage[hyphenbreaks]{breakurl}
\usepackage{xparse}

%==============
% Font packages
%==============

\usepackage[utf8]{inputenc}  
\usepackage{bbm}
\usepackage[colorlinks=true, linkcolor=blue, citecolor=blue, urlcolor=blue, breaklinks=true]{hyperref}

%================
% Margin Settings
%================

\leftmargin=0in
\topmargin=0pt
\headheight=0pt
\oddsidemargin=0in
\evensidemargin=0in
\textheight=8.75in
\textwidth=6.5in
\parindent=0.5cm
\headsep=0.25in
\widowpenalty10000
\clubpenalty10000

%============
% Referencing
%============

\usepackage[capitalize]{cleveref} 
\crefname{defin}{Definition}{Definitions}
\crefname{eg}{Example}{Examples}
\crefname{egs}{Example}{Examples}
\crefname{lem}{Lemma}{Lemmas}
\crefname{theo}{Theorem}{Theorems}
\crefname{equation}{}{}
\crefname{enumi}{}{}

%=================
% Math Definitions
%=================

\newcommand{\N}{\mathbb{N}}

\newcommand{\Z}{\mathbb{Z}}
\newcommand{\kk}{\Bbbk}
\newcommand{\one}{\mathbbm{1}}
\newcommand{\CC}{\mathcal{C}}
\newcommand{\DD}{\mathcal{D}}
\newcommand{\B}{\mathcal{B}}

\newcommand{\OB}{\mathcal{OB}}

\newcommand{\isom}{\cong}
\newcommand{\g}{\mathfrak{g}}
\newcommand{\gl}{\mathfrak{gl}}
\newcommand{\so}{\mathfrak{so}}
\renewcommand{\sp}{\mathfrak{sp}}
\newcommand{\osp}{\mathfrak{osp}}
\newcommand{\ev}{\textup{ev}}
\newcommand{\ngle}[1]{\langle #1 \rangle}
\newcommand{\dashmod}{\textup{-mod}}
\newcommand{\kkvec}{\textup{Vec}_\kk}

\DeclareMathOperator{\End}{End}
\DeclareMathOperator{\Hom}{Hom}
\DeclareMathOperator{\ob}{ob}

\DeclareMathOperator{\id}{id}
\DeclareMathOperator{\im}{im}
\DeclareMathOperator{\Curr}{Curr}
\DeclareMathOperator{\Kar}{Kar}

\DeclareMathOperator{\sgn}{sgn}

\newcommand{\bm}{\begin{bmatrix}}
\newcommand{\nm}{\end{bmatrix}}
\newcommand{\summ}{\sum\limits}

\newcommand{\go}{\mathord{\mathsf{I}}}
\newcommand{\goup}{\mathord{\uparrow}}
\newcommand{\godown}{\mathord{\downarrow}}

%=====
% TikZ
%=====

\usepackage{tikz}
\usetikzlibrary{arrows.meta}
\usetikzlibrary{decorations.markings}
\usetikzlibrary{calc}

%-------
% Styles
%-------

\tikzset{anchorbase/.style={>=To,baseline={([yshift=-0.5ex]current bounding box.center)}}}
\tikzset{ % Syntax: \begin{tikzpicture}[centerzero={0,0.2}]
	centerzero/.style={>=To,baseline={([yshift=-0.5ex](#1))}},
	centerzero/.default={0,0}
}
\tikzset{wipe/.style={white,line width=3pt}}

%--------------
% Diagrammatics
%--------------

%Thick lines for diagrams. gmod = ``green module'', etc.
\tikzset{gmod/.style={ForestGreen, thick}}
\tikzset{rmod/.style={red, thick}}
\tikzset{blmod/.style={blue, thick}}
\tikzset{bmod/.style={black, thick}}
\tikzset{gmod>/.style={->, thick, ForestGreen}}
\tikzset{<gmod/.style={<-, thick, ForestGreen}}

%Decorations for strands
\newcommand\triup[3]{\draw[fill=white, draw=black]  ($#1 + (0, 0.125) $) -- ++(0.1445, -0.25) -- ++(-0.289, 0) -- cycle; 
%The following \hspace's are to ensure proper spacing for the triangle label (regardless of which side it's on)
\path #1 node[anchor=#2] {\hspace{2px}$\scriptstyle #3$\hspace*{1px}};}
\newcommand\tridown[3]{\draw[fill=white, draw=black] ($#1 + (0, -0.125) $) -- ++(-0.1445, 0.25) -- ++(0.289, 0) -- cycle; \path #1 node[anchor=#2] {{\hspace{2px}$\scriptstyle #3$\hspace*{2px}}};}
\newcommand{\coupon}[2]{ % \coupon{position}{label}
	\draw (#1) node[inner sep=2pt,draw,fill=white,rounded corners] {$\scriptstyle{#2}$}
}
\newcommand\dotlabel[1]{$\scriptstyle{#1}$}
%adot = white dot, bdot = black dot, similar for the others
\newcommand\adot[1]{\filldraw[fill=white, draw=black] #1 circle (1.5pt)}
\newcommand\bdot[1]{\filldraw[fill=black, draw=black] #1 circle (1.5pt)}

\newcommand\bsquare[1]{\filldraw[fill=black, draw=black] ($#1 +(-0.075, -0.075)$) -- ($#1 +(0.075, -0.075)$) -- ($#1 + (0.075, 0.075)$) -- ($#1 + (-0.075, 0.075)$)}
\newcommand\atoken[4][0px]{%  \atoken[vertical spacing]{position}{anchor}{label}
	\filldraw[fill=white, draw=black] (#2) circle (1.5pt) node[anchor=#3] {\raisebox{#1}{\dotlabel{#4}}}
}
\newcommand\btoken[4][0px]{%  \btoken[vertical spacing]{position}{anchor}{label}
	\filldraw[black] (#2) circle (1.5pt) node[anchor=#3] {\raisebox{#1}{\dotlabel{#4}}}
}

\newcommand\bsquaretoken[4][0px]{%  \bsquaretoken[vertical spacing]{position}{anchor}{label}
	\filldraw[fill=black, draw=black] ($(#2) +(-0.075, -0.075)$) -- ($(#2) +(0.075, -0.075)$) -- ($(#2) + (0.075, 0.075)$) -- ($(#2) + (-0.075, 0.075)$) -- ($(#2) +(-0.075, -0.075)$) node[anchor=#3] {\hspace{2px}\raisebox{#1}{\dotlabel{#4}}\hspace*{0px}}
}

%Common diagram shapes
\newcommand{\dcap}{
	\begin{tikzpicture}[anchorbase]
	\draw[bmod] (0,0) -- (0,0.125) arc (180:0:0.25) -- (0.5,0);
	\end{tikzpicture}	
}
\newcommand{\dcup}{
	\begin{tikzpicture}[anchorbase]
	\draw[bmod] (0,0.125) -- (0,0) arc (180:360:0.25) -- (0.5,0.125);
	\end{tikzpicture}	
}
\newcommand{\dmult}{
	\begin{tikzpicture}[anchorbase]
	\draw[bmod] (0, 0) -- (0.25, 0.25) -- (0.25, 0.5);
	\draw[bmod] (0.5, 0) -- (0.25, 0.25);
	\end{tikzpicture}	
}

\newcommand{\dmultred}{
	\begin{tikzpicture}[anchorbase]
	\draw[rmod] (0, 0) -- (0.25, 0.25) -- (0.25, 0.5);
	\draw[rmod] (0.5, 0) -- (0.25, 0.25);
	\end{tikzpicture}	
}
\newcommand{\dmodulemult}{
	\begin{tikzpicture}[anchorbase]
	\draw[bmod] (0, 0) -- (0.25, 0.25);
	\draw[gmod] (0.5, 0) -- (0.25, 0.25) -- (0.25, 0.5);
	\end{tikzpicture}	
}

\newcommand{\damapmult}[2][0px]{
	\begin{tikzpicture}[anchorbase]
	\draw[bmod] (0, 0) -- (0.25, 0.25);
	\draw[gmod] (0.5, 0) -- (0.25, 0.25) -- (0.25, 0.5);
	\btoken[#1]{0.25, 0.25}{west}{#2};
	\end{tikzpicture}	
}
\newcommand{\damapmultred}[2][0px]{
	\begin{tikzpicture}[anchorbase]
	\draw[bmod] (0, 0) -- (0.25, 0.25);
	\draw[rmod] (0.5, 0) -- (0.25, 0.25) -- (0.25, 0.5);
	\btoken[#1]{0.25, 0.25}{west}{#2};
	\end{tikzpicture}	
}
\newcommand{\damapmultblue}[2][0px]{
	\begin{tikzpicture}[anchorbase]
	\draw[bmod] (0, 0) -- (0.25, 0.25);
	\draw[blmod] (0.5, 0) -- (0.25, 0.25) -- (0.25, 0.5);
	\btoken[#1]{0.25, 0.25}{west}{#2};
	\end{tikzpicture}	
}
\newcommand{\deamapmult}[3][0px]{
	\begin{tikzpicture}[anchorbase]
	\draw[bmod] (0, 0) -- (0.25, 0.25);
	\draw[gmod] (0.5, 0) -- (0.25, 0.25) -- (0.25, 0.5);
	\btoken[#1]{0.25, 0.25}{west}{#2};
	\node[anchor=north] at (0, 0) {$\scriptstyle #3$};
	%Phantom top label to get correct vertical positioning.
	\node[anchor=south] at (0.25, 0.5) {$\vphantom{\scriptstyle #3}$};
	\end{tikzpicture}	
}
\newcommand{\dmoduleendo}{
	\begin{tikzpicture}[anchorbase]
	\draw[gmod] (0, 0) -- (0, 0.5);
	\bdot{(0, 0.25)};
	\end{tikzpicture}	
}
\newcommand{\dcross}{
	\begin{tikzpicture}[anchorbase]
	\draw[bmod] (0, 0) -- (0.5, 0.5);
	\draw[bmod] (0.5, 0) -- (0, 0.5);
	\end{tikzpicture}	
}
\newcommand{\dcrossup}{
	\begin{tikzpicture}[anchorbase]
	\draw[->, thick] (0, 0) -- (0.5, 0.5);
	\draw[->, thick] (0.5, 0) -- (0, 0.5);
	\end{tikzpicture}	
}

\newcommand{\dcrossright}{
\begin{tikzpicture}[anchorbase]
\draw[->, thick] (0, 0) -- (0.5, 0.5);
\draw[<-, thick] (0.5, 0) -- (0, 0.5);
\end{tikzpicture}	
}
\newcommand{\dcrossleft}{
	\begin{tikzpicture}[anchorbase]
	\draw[<-, thick] (0, 0) -- (0.5, 0.5);
	\draw[->, thick] (0.5, 0) -- (0, 0.5);
	\end{tikzpicture}	
}
\newcommand{\dcapupdown}{
	\begin{tikzpicture}[anchorbase]
	\draw[->, thick] (0,0) -- (0,0.125) arc (180:0:0.25) -- (0.5,0);
	\end{tikzpicture}	
}
\newcommand{\dcapdownup}{
	\begin{tikzpicture}[anchorbase]
	\draw[<-, thick] (0,0) -- (0,0.125) arc (180:0:0.25) -- (0.5,0);
	\end{tikzpicture}	
}
\newcommand{\dcupupdown}{
	\begin{tikzpicture}[anchorbase]
	\draw[<-, thick] (0,0.125) -- (0,0) arc (180:360:0.25) -- (0.5,0.125);
	\end{tikzpicture}	
}
\newcommand{\dcupdownup}{
	\begin{tikzpicture}[anchorbase]
	\draw[->, thick] (0,0.125) -- (0,0) arc (180:360:0.25) -- (0.5,0.125);
	\end{tikzpicture}	
}

%=====================
% Theorem Environments
%=====================

\newtheorem{theo}{Theorem}[section]
\newtheorem{prop}[theo]{Proposition}
\newtheorem{lem}[theo]{Lemma}

\newtheorem{conj}[theo]{Conjecture}

\theoremstyle{definition}
\newtheorem{defin}[theo]{Definition}
\newtheorem{rem}[theo]{Remark}
\newtheorem{eg}[theo]{Example}
\newtheorem{egs}[theo]{Examples}

\numberwithin{equation}{section}
\allowdisplaybreaks

\setenumerate[1]{label=(\alph*)}

\setcounter{tocdepth}{1}

\usepackage{environ}
\newcommand{\comsize}{3 in}
\newcommand{\commentary}[1]{\begin{minipage}{\comsize} #1 \end{minipage}}

\NewEnviron{commeqn}[1][2.5 in]{\renewcommand{\comsize}{#1} \begin{align*} \BODY \end{align*}}
\graphicspath{{/}}

%========
% Toggles
%========

\newtoggle{comments}
\newtoggle{details}
\newtoggle{detailsnote}

%\toggletrue{comments}   % To include comments (default is false)
%\toggletrue{details}   % To include details (default is false)
%\toggletrue{detailsnote}   % For including note about details toggle (default is false)

\iftoggle{comments}{%
	\newcommand{\scomments}[1]{
		\ \\
		{\color{Violet}
			\textbf{SSS:} #1
		}
		\ \\
	}
}{%
	\newcommand{\scomments}[1]{}
}

\iftoggle{details}{%
	\newcommand{\details}[1]{
		\ \\
		{\color{OliveGreen}
			\textbf{Details:} #1
		}
		\\
	}
}{%
	\newcommand{\details}[1]{}
}

%=============
%Coloured text
%=============

\newcommand{\red}[1]{{\mathcolor{red}{#1}}}
\newcommand{\blue}[1]{{\mathcolor{blue}{#1}}}
\newcommand{\green}[1]{{\mathcolor{ForestGreen}{#1}}}

%===============
\begin{document}
%===============

\title{Towards interpolating categories for equivariant map algebras}
\author{Saima Samchuck-Schnarch}

\subjclass{18M30, 18M05, 17B10}

\keywords{String diagram, monoidal category, Lie algebra, equivariant map algebra, current algebra, loop algebra, interpolating category}

\maketitle
\thispagestyle{empty}

\begin{abstract}
	Using the language of string diagrams, we define categorical generalizations of modules for map algebras $\g \otimes A$ and equivariant map algebras $(\g \otimes A)^\Gamma$, where $\g$ is a Lie algebra, $A$ is a commutative associative algebra, and $\Gamma$ is an abelian group acting on $\g$ and $A$. After establishing some properties of these modules, we present several examples of how our definitions can applied in various diagrammatic categories. In particular, we use the oriented Brauer category $\OB$ to construct a candidate interpolating category for the categories of $\gl_n \otimes \kk[t]$-modules.
\end{abstract}

\section{Introduction}

Given a Lie algebra $\g$ and a commutative associative algebra $A$, $\g \otimes A$ naturally becomes a Lie algebra when equipped with the Lie bracket extended from $[x \otimes a, y \otimes b] = [x, y] \otimes ab$. Such Lie algebras, sometimes called \emph{map algebras}, have been studied in some depth for general $A$ (see e.g.\ \cite{feiginLoktev} and \cite{chariMapAlgebras}), and in more detail for certain choices of $A$. Writing $\kk$ for the ground field, taking $A = \kk[t]$ yields \emph{current algebras}, and taking $A = \kk[t, t^{-1}]$ yields \emph{loop algebras}. Both of these families of Lie algebras have applications to theoretical physics, and the representation theory of specific types of current and loop algebras (for instance, where $\g$ is a simple Lie algebra or $\g = \gl_n$) has been studied extensively in the literature. More generally, taking $A = \kk[t_1, t_2, \dotsc, t_k]$ or $A = \kk[t_1, t_2, \dotsc, t_k, t_1^{-1}, t_2^{-1}, \dotsc, t_k^{-1}]$ yield \emph{multicurrent} and \emph{multiloop algebras}, respectively, with connections to extended affine Lie algebras; see the exposition \cite{extendedAffineLieAlgebras} for an overview of this topic. More recently, \emph{equivariant map algebras}, a twisted version of the algebras $\g \otimes A$ that are defined with respect to an action of a group $\Gamma$ on $\g$ and on $A$, were studied in \cite{equivariantMapAlgebras}. These algebras and their modules encompass a wide range of previously-studied examples, including twisted (multi)current and (multi)loop algebras (see e.g.\ \cite{lauMultiloop}), the Onsager algebra and its generalizations, and the tetrahedron algebra. Phrasing theorems and constructions in terms of equivariant map algebras can streamline and unify proofs and results that previously appeared disparate; see e.g.\ the classification of irreducible finite-dimensional representations for equivariant map algebras in \cite{equivariantMapAlgebras}.

In recent years, string diagrammatic methods have proved to be a useful tool for investigating interpolating categories, and more generally for studying the representation theory of various Lie algebras and groups. For instance, the oriented Brauer category $\OB$ interpolates the categories of $\gl_n$-modules, and the unoriented Brauer category $\B$ interpolates both the categories of $\so_n$-modules and $\sp_{2n}$ modules (and more generally, the category of modules for the orthosymplectic Lie superalgebra $\mathfrak{osp}_{m \mid 2n}$). Working in such diagrammatic categories allows one to employ intuitive topological arguments, and often leads to natural generalizations, e.g.\ the Frobenius Brauer supercategories $\OB(A)$ and $\B(A)$ studied in \cite{orientedFrobeniusBrauer} and \cite{diagrammaticsForRealSupergroups}, respectively.

In this paper, we define and study categorical generalizations of equivariant map algebras and their modules, expressed in the language of string diagrams. On one hand, applying the content of this paper to the category of vector spaces provides a new graphical calculus for working with equivariant map algebras. This makes visualizations easier and allows for the aforementioned sorts of topological arguments. On the other hand, our definitions and constructions can be applied to other categories, and they are particularly natural in the context of diagrammatic categories like $\OB$ and $\B$. For example, for each $n \in \N$ one can consider $(L, \kk[t])$-modules in $\OB(n)$, where $L$ is a certain Lie algebra in the oriented Brauer category, and ``$(L, \kk[t])$-modules'' refers to our categorical generalization of modules for a current algebra $L \otimes \kk[t]$. These modules are mapped via the incarnation functor $I_n \colon \OB(n) \to \gl_n\dashmod$ to $\gl_n \otimes \kk[t]$-modules. Thus $(L, \kk[t])$-modules in $\OB$ in some sense capture the behaviour of $\gl_n \otimes \kk[t]$-modules for all $n$ at once. Moreover, one can consider $(L, \kk[t])$-modules in $\OB(\delta)$ even when the dimension parameter $\delta$ is not an integer. Such modules would correspond to $\gl_\delta \otimes \kk[t]$-modules, but of course there is no general linear Lie algebra of degree $\delta \notin \Z$ (for negative integers $n$ one can work with the general linear Lie superalgebras $\gl_{a|b}$, with $b > a$). Hence, in much the same way that $\OB$ allows us to study $\gl_\delta$-modules for $\delta \notin \Z$ despite the fact that $\gl_\delta$ itself does not exist, the definitions in this paper make possible the study of modules for map algebras and equivariant map algebras with non-integer dimension parameters.

We will recall some basic notions related to string diagrams in Section \ref{s:preliminaries}, define our core objects of study and prove some of their properties in Section \ref{s:lieAlgebraObjects}, and then examine some applications to diagrammatic categories in Section \ref{s:diagrammaticCategories}, including a candidate interpolating category $\Curr(\OB)$ for the categories of $\gl_n \otimes \kk[t]$-modules.

We conclude the introduction by listing a few potential directions for future research related to the content of this paper.
\begin{itemize}
	\item Conjecture \ref{conj:currentInterpolation} remains to be investigated further -- see the discussion after Lemma \ref{lem:incarnationKernel} for details on what would need to be shown in order to prove the conjecture. More generally, one could use diagrammatic categories like $\OB$ and $\B$ to produce candidate interpolating categories for the categories of modules for other types of (equivariant) map algebras, e.g.\ multicurrent algebras or twisted loop algebras.
	\item The definition of $(L, A)$-modules in this paper combines a Lie algebra $L$ internal to a category $\CC$ with a $\kk$-algebra $A$ that (in general) is external to $\CC$. This raises a natural question: could this sort of internal-external definition be used to produce categorical generalizations of other algebraic constructions? For instance, if $A$ is a semigroup in some category $\CC$ and $B$ is an associative $\kk$-algebra, one could define $(A, B)$-modules in $\CC$ (intuitively corresponding to $A \otimes B$-modules) using a collection of action morphisms indexed by the elements of $B$, similar to what is done in Definition \ref{def:aMapModule}. Such definitions could potentially be used to produce interpolating categories for various types of modules.
	\item There is a presentation for current algebras $\g \otimes \kk[t]$ known as \emph{Drinfeld's $J$ presentation}, generated by elements $x$ and $J(x)$, with $x$ ranging over $\g$. The isomorphism with ${\g \otimes \kk[t]}$ is given by $x \mapsto x \otimes 1$, $J(x) \mapsto x \otimes t$. This $J$ presentation is based on Drinfeld's similar presentation for Yangians introduced in \cite{drinfeldYangians}; for information on $J$ presentations in English, see e.g.\ \cite[Thm.~12.1.1]{chariPressley} and \cite[\S3.1]{drinfeldJTwistedYangians}. It would be interesting to establish a similar $J$ presentation for current modules in general symmetric monoidal $\kk$-linear categories. The author has investigated such presentations, but existing arguments for showing that Drinfeld's $J$ presentation yields a Lie algebra isomorphic to $\g \otimes \kk[t]$ do not straightforwardly generalize to this new case.
\end{itemize}

\section*{Acknowledgements}

This research was supported by a CGS-D scholarship from the Natural Sciences and Engineering Research Council of Canada (NSERC).

\section{Preliminaries} \label{s:preliminaries}

\subsection{Conventions}

Throughout this paper we work over a field $\kk$ of characteristic 0. We write $\kkvec$ for the category of $\kk$-vector spaces, $\one$ for the unit object of a monoidal category, and $\id_X$ for the identity morphism of an object $X$. If $\Gamma$ is a group, we write $\hat{\Gamma}$ for its character group. Except for Lie algebras, all $\kk$-algebras in this paper are assumed to be associative, but not necessarily unital.

\subsection{Diagrammatics} \label{ss:diagrammatics}
The notions we will discuss in this paper are most naturally expressed via string diagrams. We will recall a few key definitions and properties here. A more detailed introduction to string diagrams with numerous examples can be found in \cite{stringDiagrams}. For a comprehensive treatment of the topic, see e.g.\ \cite[\S2]{monoidalCategoriesAndTopologicalFieldTheory}.

We represent morphisms in monoidal categories via string diagrams, with composition corresponding to vertical stacking and tensor products corresponding to horizontal juxtaposition. Identity morphisms are drawn as plain strands. To reduce visual clutter, we will typically use colour to indicate domains and codomains in string diagrams, as opposed to explicit object labels. To illustrate, suppose that $X$, $\green{Y}$, $\red{Z}$, and $\blue{A}$ are objects, and that $f \colon X \to \green{Y}$, $g \colon \green{Y} \to \red{Z}$, and $h \colon \blue{A} \to \red{Z}$ are morphisms. Then $\id_{X}, f, g \circ f$, and $f \otimes h$ would be respectively drawn as follows:
$$\begin{tikzpicture}[anchorbase]
\draw[bmod] (0, 0) -- (0, 1);
\end{tikzpicture}\ ,
\quad \quad
\begin{tikzpicture}[anchorbase]
\draw[bmod] (0, 0) -- (0, 0.5);
\draw[gmod] (0, 0.5) -- (0, 1);
\coupon{0, 0.5}{f};
\end{tikzpicture},
\quad \quad
\begin{tikzpicture}[anchorbase]
\draw[bmod] (0, 0) -- (0, 0.5);
\draw[gmod] (0, 0.5) -- (0, 1);
\draw[rmod] (0, 1) -- (0, 1.5);
\coupon{0, 0.5}{f};
\coupon{0, 1}{g};
\end{tikzpicture},
\quad \quad
\begin{tikzpicture}[anchorbase]
\draw[bmod] (0, 0) -- (0, 0.5);
\draw[gmod] (0, 0.5) -- (0, 1);
\draw[blmod] (0.75, 0) -- (0.75, 0.5);
\draw[rmod] (0.75, 0.5) -- (0.75, 1);
\coupon{0, 0.5}{f};
\coupon{0.75, 0.5}{h};
\end{tikzpicture}
\ \overset{*}{=}\
\begin{tikzpicture}[anchorbase]
\draw[bmod] (0, 0) -- (0, 0.5);
\draw[gmod] (0, 0.5) -- (0, 1.5);
\draw[blmod] (0.75, 0) -- (0.75, 1);
\draw[rmod] (0.75, 1) -- (0.75, 1.5);
\coupon{0, 0.5}{f};
\coupon{0.75, 1}{h};
\end{tikzpicture}
\ \overset{**}{=}\
\begin{tikzpicture}[anchorbase]
\draw[bmod] (0, 0) -- (0, 1);
\draw[gmod] (0, 1) -- (0, 1.5);
\draw[blmod] (0.75, 0) -- (0.75, 0.5);
\draw[rmod] (0.75, 0.5) -- (0.75, 1.5);
\coupon{0, 1}{f};
\coupon{0.75, 0.5}{h};
\end{tikzpicture}\ .$$
The equalities $*$ and $**$ comprise the \emph{interchange law} for monoidal categories, which tells us that we may freely slide morphisms up and down along strands. We typically avoid traditional morphism names $f, g, h$, etc., and instead introduce morphisms as decorated strands/diagrams. For instance, $\begin{tikzpicture}[anchorbase]
\draw[bmod] (0, 0) -- (0, 0.25);
\draw[gmod] (0, 0.25) -- (0, 0.5);
\adot{(0, 0.25)};
\end{tikzpicture}$ and $\dmodulemult$ could respectively represent a morphism $X \to \green{Y}$ and a morphism $X \otimes \green{Y} \to \green{Y}$.

If $X$ and $\green{Y}$ are objects in a symmetric monoidal category, we draw the symmetric braiding morphism $\sigma_{X, \green{Y}} \colon X \otimes \green{Y} \to \green{Y} \otimes X$ as $\begin{tikzpicture}[anchorbase]
\draw[bmod] (0, 0) -- (0.5, 0.5);
\draw[gmod] (0.5, 0) -- (0, 0.5);
\end{tikzpicture}$. These crossings satisfy the following key properties, for all morphisms $\begin{tikzpicture}[anchorbase]
\draw[bmod] (0, 0) -- (0, 0.25);
\draw[gmod] (0, 0.25) -- (0, 0.5);
\adot{(0, 0.25)};
\end{tikzpicture}$ and $\begin{tikzpicture}[anchorbase]
\draw[rmod] (0, 0) -- (0, 0.25);
\draw[blmod] (0, 0.25) -- (0, 0.5);
\bdot{(0, 0.25)};
\end{tikzpicture}$:
\begin{equation}\label{eq:crossingIdentities} \tag{CROSS}
\begin{tikzpicture}[anchorbase]
\draw[bmod] (0, 0) -- (0, 0.25) to[out=90, in=270] (0.25, 0.5) to[out=90, in=270] (0, 0.75) -- (0, 1);
\draw[gmod] (0.25, 0) -- (0.25, 0.25) to[out=90, in=270] (0, 0.5) to[out=90, in=270] (0.25, 0.75) -- (0.25, 1);
\end{tikzpicture}
\ =\
\begin{tikzpicture}[anchorbase]
\draw[bmod] (0, 0) -- (0, 1);
\draw[gmod] (0.25, 0) -- (0.25, 1);
\end{tikzpicture}\ ,
\quad \quad
\begin{tikzpicture}[anchorbase]
\draw[bmod] (0, 0) -- (0, 0.25) to[out=90, in=270] (0.25, 0.5) to[out=90, in=270] (0.5, 0.75) -- (0.5, 1.25);
\draw[gmod] (0.25, 0) -- (0.25, 0.25) to[out=90, in=270] (0, 0.5) -- (0, 0.75) to[out=90, in=270] (0.25, 1) -- (0.25, 1.25);
\draw[rmod] (0.5, 0) -- (0.5, 0.5) to[out=90, in=270] (0.25, 0.75) to[out=90, in=270] (0, 1) -- (0, 1.25);
\end{tikzpicture}
\ =\
\begin{tikzpicture}[anchorbase, xscale=-1]
\draw[rmod] (0, 0) -- (0, 0.25) to[out=90, in=270] (0.25, 0.5) to[out=90, in=270] (0.5, 0.75) -- (0.5, 1.25);
\draw[gmod] (0.25, 0) -- (0.25, 0.25) to[out=90, in=270] (0, 0.5) -- (0, 0.75) to[out=90, in=270] (0.25, 1) -- (0.25, 1.25);
\draw[bmod] (0.5, 0) -- (0.5, 0.5) to[out=90, in=270] (0.25, 0.75) to[out=90, in=270] (0, 1) -- (0, 1.25);
\end{tikzpicture}\ ,
\quad \quad
\begin{tikzpicture}[anchorbase]
\draw[bmod] (0, 0) -- (0, 0.25);
\draw[gmod] (0, 0.25) -- (0, 0.5) to[out=90, in=270] (0.25, 0.75) -- (0.25, 1);
\draw[rmod] (0.25, 0) -- (0.25, 0.25);
\draw[blmod] (0.25, 0.25) -- (0.25, 0.5) to[out=90, in=270] (0, 0.75) -- (0, 1);
\adot{(0, 0.25)};
\bdot{(0.25, 0.25)};
\end{tikzpicture}
\ =\
\begin{tikzpicture}[anchorbase, yscale=-1]
\draw[blmod] (0, 0) -- (0, 0.25);
\draw[rmod] (0, 0.25) -- (0, 0.5) to[out=90, in=270] (0.25, 0.75) -- (0.25, 1);
\draw[gmod] (0.25, 0) -- (0.25, 0.25);
\draw[bmod] (0.25, 0.25) -- (0.25, 0.5) to[out=90, in=270] (0, 0.75) -- (0, 1);
\bdot{(0, 0.25)};
\adot{(0.25, 0.25)};
\end{tikzpicture}\ .
\end{equation}
Given an object $X$ with (right) dual $X^*$, we will typically draw $X$ as an upwards-oriented strand, $\goup$, and $X^*$ as a downwards-oriented strand, $\godown$. The evaluation and coevaluation morphisms for the dual pair $(X, X^*)$ would then be drawn as $\dcapupdown \colon \goup \otimes \godown \to \one$ and $\dcupdownup \colon \one \to \godown \otimes \goup$ (note that $\one$ is represented in string diagrams by the lack of a strand). Such caps and cups satisfy the \emph{zigzag identities}:
\begin{equation}\label{eq:zigzag} \tag{ZIGZAG}
\begin{tikzpicture}[anchorbase]
\draw[->, black, thick] (0, 0) -- (0, 0.75) arc(180:0:0.25) -- (0.5, 0.5) arc(180:360:0.25) -- (1, 1.25);
\end{tikzpicture}
\ =\
\begin{tikzpicture}[anchorbase]
\draw[->, black, thick] (0, 0) -- (0, 1.25);
\end{tikzpicture}
\ , \quad\quad
\begin{tikzpicture}[anchorbase]
\draw[<-, black, thick] (1, 0) -- (1, 0.75) arc(0:180:0.25) -- (0.5, 0.5) arc(360:180:0.25) -- (0, 1.25);
\end{tikzpicture}
\ =\
\begin{tikzpicture}[anchorbase]
\draw[<-, black, thick] (0, 0) -- (0, 1.25);
\end{tikzpicture}\ .
\end{equation}
In a symmetric monoidal category, right duals can be made into left duals (and vice versa) in a canonical way, with the cup-cap pairs being related by the following \emph{delooping identities}:
\begin{equation}\label{eq:delooping}\tag{DELOOP}
\begin{tikzpicture}[anchorbase]
\draw[<-, thick] (0, 0) -- (0, 0.25) arc(180:0:0.25) -- (0.5, 0);
\end{tikzpicture} 
\ =\ 
\begin{tikzpicture}[anchorbase]
\draw[<-, thick] (0, 0) -- (0, 0.25) to[out=90, in=270] (0.5, 0.75) arc(0:180:0.25) to[out=270, in=90] (0.5, 0.25) -- (0.5, 0);
\end{tikzpicture}\ , 
\quad \quad 
\begin{tikzpicture}[anchorbase, xscale=-1]
\draw[<-, thick] (0, 0) -- (0, 0.25) arc(180:0:0.25) -- (0.5, 0);
\end{tikzpicture} 
\ =\ 
\begin{tikzpicture}[anchorbase, xscale=-1]
\draw[<-, thick] (0, 0) -- (0, 0.25) to[out=90, in=270] (0.5, 0.75) arc(0:180:0.25) to[out=270, in=90] (0.5, 0.25) -- (0.5, 0);
\end{tikzpicture}\ , 
\quad \quad
\begin{tikzpicture}[anchorbase, yscale=-1]
\draw[<-, thick] (0, 0) -- (0, 0.25) arc(180:0:0.25) -- (0.5, 0);
\end{tikzpicture} 
\ =\ 
\begin{tikzpicture}[anchorbase, yscale=-1]
\draw[<-, thick] (0, 0) -- (0, 0.25) to[out=90, in=270] (0.5, 0.75) arc(0:180:0.25) to[out=270, in=90] (0.5, 0.25) -- (0.5, 0);
\end{tikzpicture}\ , 
\quad \quad
\begin{tikzpicture}[anchorbase, xscale=-1, yscale=-1]
\draw[<-, thick] (0, 0) -- (0, 0.25) arc(180:0:0.25) -- (0.5, 0);
\end{tikzpicture} 
\ =\ 
\begin{tikzpicture}[anchorbase, xscale=-1, yscale=-1]
\draw[<-, thick] (0, 0) -- (0, 0.25) to[out=90, in=270] (0.5, 0.75) arc(0:180:0.25) to[out=270, in=90] (0.5, 0.25) -- (0.5, 0);
\end{tikzpicture}\ .
\end{equation}
Throughout the paper we assume that all cups and caps for dual pairs satisfy \eqref{eq:delooping}.

When working in a $\kk$-linear monoidal category that is also additive, we adopt the following diagrammatic shorthands. Let $I$ be some finite indexing set, and suppose $X = \bigoplus\limits_{i \in I} \green{X_i}$ for some objects $\green{X_i}$. For each $i \in I$, we draw the projection $X \to \green{X_i}$ as $\begin{tikzpicture}[anchorbase]
\draw[bmod] (0, 0) -- (0, 0.25);
\draw[gmod] (0, 0.25) -- (0, 0.5);
\triup{(0, 0.25)}{west}{i};
\end{tikzpicture}$ and the inclusion $\green{X_i} \to X$ as $\begin{tikzpicture}[anchorbase]
\draw[gmod] (0, 0) -- (0, 0.25);
\draw[bmod] (0, 0.25) -- (0, 0.5);
\tridown{(0, 0.25)}{west}{i};
\end{tikzpicture}$. The axioms of a biproduct can then be pictured as follows, for $i, j \in I$:
\begin{equation}\label{eq:biproduct1}
\begin{tikzpicture}[anchorbase]
\draw[gmod] (0, 0) -- (0, 0.25);
\draw[bmod] (0, 0.25) -- (0, 0.75);
\draw[gmod] (0, 0.75) -- (0, 1);
\tridown{(0, 0.25)}{west}{i};
\triup{(0, 0.75)}{west}{j};
\end{tikzpicture} = \delta_{ij} = \begin{cases} \id_{\green{X_i}} &\text{if } i = j, \\ 
0 &\text{if } i \neq j, \end{cases}
\end{equation}
\begin{equation} \label{eq:biproduct2}
\summ_{i \in I}
\begin{tikzpicture}[anchorbase]
\draw[bmod] (0, 0) -- (0, 0.25);
\draw[gmod] (0, 0.25) -- (0, 0.75);
\draw[bmod] (0, 0.75) -- (0, 1);
\triup{(0, 0.25)}{west}{i};
\tridown{(0, 0.75)}{west}{i};
\end{tikzpicture} = \id_{X}.
\end{equation}

\section[Lie algebras, (L, A)-modules, and equivariant (Gamma, L, A)-modules]{Lie algebras, $(L, A)$-modules, and equivariant $(\Gamma, L, A)$-modules} \label{s:lieAlgebraObjects}

Throughout this section we work in a symmetric monoidal $\kk$-linear category $\CC$.

\subsection{Core definitions and examples}

In this subsection, we introduce the main objects of study in this paper: ungraded and graded Lie algebras, and various types of modules for them. We will present the definitions in order of increasing generality.

\begin{defin} \label{def:lieObject}
	A \emph{Lie algebra} (in $\CC$) is an object $L$ equipped with a morphism $\dmult \colon L \otimes L \to L$, called the \emph{Lie bracket} for $L$, satisfying the following identities:
	\begin{equation} \label{eq:skew} \tag{SKEW}
	\begin{tikzpicture}[anchorbase]
	\draw[bmod] (0, 0) -- (0.5, 0.5) -- (0.25, 0.75) -- (0.25, 1);
	\draw[bmod] (0.5, 0) -- (0, 0.5) -- (0.25, 0.75);
	\end{tikzpicture}
	\quad = \quad
	- \dmult,
	\end{equation}
	\begin{equation} \label{eq:jacobi} \tag{JACOBI}
	\begin{tikzpicture}[anchorbase]
	\draw[bmod] (0.25, 0) -- (0.5, 0.25);
	\draw[bmod] (0, 0) -- (0, 0.25) -- (0.25, 0.5);
	\draw[bmod] (0.75, 0) -- (0.5, 0.25) -- (0.25, 0.5) -- (0.25, 0.75);
	\end{tikzpicture}
	\ =\
	\begin{tikzpicture}[anchorbase]
	\draw[bmod] (0.5, 0) -- (0.25, 0.25);
	\draw[bmod] (0, 0) -- (0.25, 0.25) -- (0.5, 0.5);
	\draw[bmod] (0.75, 0) -- (0.75, 0.25) -- (0.5, 0.5) -- (0.5, 0.75);
	\end{tikzpicture}
	\ +\
	\begin{tikzpicture}[anchorbase]
	\draw[bmod] (0, -0.25) -- (0.25, 0) -- (0.5, 0.25);
	\draw[bmod] (0.25, -0.25) -- (0, 0) -- (0, 0.25) -- (0.25, 0.5);
	\draw[bmod] (0.75, -0.25) -- (0.75, 0) -- (0.5, 0.25) -- (0.25, 0.5) -- (0.25, 0.75);
	\end{tikzpicture}\ .
	\end{equation}
	If $L$ and $\red{M}$ are two Lie algebras, a morphism from $L$ to $\red{M}$ is a $\CC$-morphism $\begin{tikzpicture}[anchorbase]
	\draw[bmod] (0, 0) -- (0, 0.25);
	\draw[rmod] (0, 0.25) -- (0, 0.5);
	\bdot{(0, 0.25)};
	\end{tikzpicture}$ that satisfies:
	\begin{equation} \label{eq:lieObjectMorphism}
	\begin{tikzpicture}[anchorbase]
	\draw[bmod] (0, 0) -- (0, 0.25);
	\draw[bmod] (0.5, 0) -- (0.5, 0.25);
	\draw[rmod] (0, 0.25) -- (0.25, 0.5) -- (0.25, 0.75);
	\draw[rmod] (0.5, 0.25) -- (0.25, 0.5);
	\bdot{(0, 0.25)};
	\bdot{(0.5, 0.25)};
	\end{tikzpicture}
	\ =\
	\begin{tikzpicture}[anchorbase]
	\draw[bmod] (0, 0) -- (0.25, 0.25) -- (0.25, 0.5);
	\draw[bmod] (0.5, 0) -- (0.25, 0.25);
	\draw[rmod] (0.25, 0.5) -- (0.25, 0.75);
	\bdot{(0.25, 0.5)};
	\end{tikzpicture}\ .
	\end{equation}
	Note that in the above we simply said that $L$ and $\red{M}$ were Lie algebras, leaving the morphisms $\dmult$ and $\dmultred$ implicit. In most cases throughout the paper, we will reference Lie algebras and their modules by just their underlying $\CC$-object.
\end{defin}

\begin{rem} \label{rem:lieAlgebraObject}
	Some authors reserve the term ``Lie algebra'' for the case $\CC = \kkvec$ (see Example \ref{ex:standardExamples}), and would in general refer to the objects of Definition \ref{def:lieObject} as ``Lie algebra objects''. For the sake of brevity, we use the term ``Lie algebra'' no matter the underlying category.
\end{rem}

\begin{defin} \label{def:lieObjectModule}
	Let $L$ be a Lie algebra. An \emph{$L$-module} is an object $\green{V}$ equipped with a morphism $\dmodulemult \colon L \otimes \green{V} \to \green{V}$ satisfying the following identity:
	\begin{equation} \label{eq:lmod} \tag{LMOD}
	\begin{tikzpicture}[anchorbase]
	\draw[bmod] (0.25, 0) -- (0.5, 0.25);
	\draw[bmod] (0, 0) -- (0, 0.25) -- (0.25, 0.5);
	\draw[gmod] (0.75, 0) -- (0.5, 0.25) -- (0.25, 0.5) -- (0.25, 0.75);
	\end{tikzpicture}
	\ =\
	\begin{tikzpicture}[anchorbase]
	\draw[bmod] (0.5, 0) -- (0.25, 0.25);
	\draw[bmod] (0, 0) -- (0.25, 0.25) -- (0.5, 0.5);
	\draw[gmod] (0.75, 0) -- (0.75, 0.25) -- (0.5, 0.5) -- (0.5, 0.75);
	\end{tikzpicture}
	\ +\
	\begin{tikzpicture}[anchorbase]
	\draw[bmod] (0, -0.25) -- (0.25, 0) -- (0.5, 0.25);
	\draw[bmod] (0.25, -0.25) -- (0, 0) -- (0, 0.25) -- (0.25, 0.5);
	\draw[gmod] (0.75, -0.25) -- (0.75, 0) -- (0.5, 0.25) -- (0.25, 0.5) -- (0.25, 0.75);
	\end{tikzpicture}\ .
	\end{equation}
	A morphism between $L$-modules $\green{V} \to \red{W}$ is a $\CC$-morphism $\begin{tikzpicture}[anchorbase]
	\draw[gmod] (0, 0) -- (0, 0.25);
	\draw[rmod] (0, 0.25) -- (0, 0.5);
	\adot{(0, 0.25)};
	\end{tikzpicture}$ satisfying
	\begin{equation} \label{eq:lmodMorphism}
	\begin{tikzpicture}[anchorbase]
	\draw[bmod] (0, 0) -- (0, 0.25) -- (0.25, 0.5);
	\draw[gmod] (0.5, 0) -- (0.5, 0.25);
	\draw[rmod] (0.5, 0.25) -- (0.25, 0.5) -- (0.25, 0.75);
	\adot{(0.5, 0.25)};
	\end{tikzpicture}
	\ =\
	\begin{tikzpicture}[anchorbase]
	\draw[bmod] (0, 0) -- (0.25, 0.25);
	\draw[gmod] (0.5, 0) -- (0.25, 0.25) -- (0.25, 0.5);
	\draw[rmod] (0.25, 0.5) -- (0.25, 0.75);
	\adot{(0.25, 0.5)};
	\end{tikzpicture}\ .
	\end{equation}
	For a given Lie algebra $L$, the collection of $L$-modules and morphisms between them in $\CC$ forms a category that we denote $L\dashmod$.
\end{defin}

\begin{eg} \label{ex:adjointModule}
	Let $\left(L, \dmult\right)$ be a Lie algebra. Then $\left(L, \dmult\right)$ is an $L$-module known as the \emph{adjoint module} for $L$. Note that \eqref{eq:lmod} for the adjoint module is the same identity as \eqref{eq:jacobi}.
\end{eg}

\begin{egs} \label{ex:standardExamples}
	A Lie algebra in $\kkvec$ is simply an ordinary Lie algebra over $\kk$. A Lie algebra in the category of super $\kk$-vector spaces (with morphisms being parity-preserving linear maps) is a Lie superalgebra. For any commutative monoid $M$, a Lie algebra in the category of $M$-graded vector spaces (with morphisms being grade-preserving linear maps) is an $M$-graded Lie algebra over $\kk$. In these three examples, an $L$-module is a representation of a Lie algebra, Lie superalgebra, or graded Lie algebra, respectively, and morphisms of $L$-modules are $L$-equivariant linear maps (that are parity/grade preserving, in the latter two cases).
	
	Given a $\kk$-linear category $\DD$ (not necessarily monoidal), the $\kk$-linear endofunctor category $\End(\DD)$ is strict monoidal but typically not symmetric. In some cases, one can equip a subcategory of $\End(\DD)$ with the structure of a symmetric monoidal category. A \emph{Lie monad over $\DD$} is a Lie algebra in some subcategory of $\End(\DD)$ that is equipped with a symmetric monoidal structure. This definition parallels the fact that an ordinary monad over $\DD$ is a monoid in $\End(\DD)$. Lie monads were previously studied with a slightly more general definition in \cite{LieMonads}.
	
	A Lie algebra in the category of abelian groups is known as a Lie ring.
\end{egs}

From this point until Example \ref{ex:evaluationModule2}, let $L$ be a Lie algebra and $A$ a commutative $\kk$-algebra.

\begin{defin} \label{def:aMapModule}
	An \emph{$(L, A)$-module} is an object $\green{V}$ equipped with a collection of \emph{action morphisms}, $\left\{\damapmult{a} : a \in A\right\},$ satisfying the following two relations for all $a, b \in A$ and $\lambda, \mu \in \kk$:
	\begin{equation} \label{eq:amaplinearity}
	\damapmult{\lambda a + \mu b} \ =\ \lambda \damapmult{a} + \mu \damapmult{b},
	\end{equation}
	\begin{equation} \label{eq:amapcompatibility}
	\begin{tikzpicture}[anchorbase]
	\draw[bmod] (0.25, 0) -- (0.5, 0.25);
	\draw[bmod] (0, 0) -- (0, 0.25) -- (0.25, 0.5);
	\draw[gmod] (0.75, 0) -- (0.5, 0.25) -- (0.25, 0.5) -- (0.25, 0.75);
	\btoken{0.5, 0.25}{west}{b};
	\btoken{0.25, 0.5}{west}{a};
	\end{tikzpicture}
	\ =\
	\begin{tikzpicture}[anchorbase]
	\draw[bmod] (0.5, 0) -- (0.25, 0.25);
	\draw[bmod] (0, 0) -- (0.25, 0.25) -- (0.5, 0.5);
	\draw[gmod] (0.75, 0) -- (0.75, 0.25) -- (0.5, 0.5) -- (0.5, 0.75);
	\btoken[2px]{0.5, 0.5}{west}{ab};
	\end{tikzpicture}
	\ +\
	\begin{tikzpicture}[anchorbase]
	\draw[bmod] (0, -0.25) -- (0.25, 0) -- (0.5, 0.25);
	\draw[bmod] (0.25, -0.25) -- (0, 0) -- (0, 0.25) -- (0.25, 0.5);
	\draw[gmod] (0.75, -0.25) -- (0.75, 0) -- (0.5, 0.25) -- (0.25, 0.5) -- (0.25, 0.75);
	\btoken{0.5, 0.25}{west}{a};
	\btoken[2px]{0.25, 0.5}{west}{b};
	\end{tikzpicture}\ .
	\end{equation}
	A morphism of $(L, A)$-modules $\green{V} \to \red{W}$ is a $\CC$-morphism $\begin{tikzpicture}[anchorbase]
	\draw[gmod] (0, 0) -- (0, 0.25);
	\draw[rmod] (0, 0.25) -- (0, 0.5);
	\adot{(0, 0.25)};
	\end{tikzpicture}$ satisfying
	\begin{equation} \label{eq:amapMorphism}
	\begin{tikzpicture}[anchorbase]
	\draw[bmod] (0, -0.25) -- (0, 0) -- (0.25, 0.25);
	\draw[gmod] (0.5, -0.25) -- (0.5, 0);
	\draw[rmod] (0.5, 0) -- (0.25, 0.25) -- (0.25, 0.5);
	\btoken{0.25, 0.25}{west}{a};
	\adot{(0.5, 0)};
	\end{tikzpicture}
	\ =\
	\begin{tikzpicture}[anchorbase]
	\draw[bmod] (0, 0) -- (0.25, 0.25);
	\draw[gmod] (0.5, 0) -- (0.25, 0.25) -- (0.25, 0.5);
	\draw[rmod] (0.25, 0.5) -- (0.25, 0.75);
	\btoken{0.25, 0.25}{west}{a};
	\adot{(0.25, 0.5)};
	\end{tikzpicture}
	\end{equation}
	for each $a \in A$. We write $(L, A)\dashmod$ for the category of $(L, A)$-modules. The linearity identity \eqref{eq:amaplinearity} is usually obvious, so we will not explicitly mention it when checking that constructions introduced later in this paper satisfy the definition of an $(L, A)$-module. Moreover, in light of \eqref{eq:amaplinearity}, it suffices to specify action morphisms for an $(L, A)$-module with $a$ ranging over a basis of $A$, and similarly it suffices to impose the identities \eqref{eq:amapcompatibility} and \eqref{eq:amapMorphism} with $a$ and $b$ ranging over a basis of $A$. The same comments apply to the equivariant $(\Gamma, L, A)$-modules we will introduce in Definition \ref{def:equivariantModule}.
\end{defin}

Working in $\CC = \kkvec$, $(L, A)$-modules are equivalent to $L \otimes A$-modules, where $L \otimes A$ is equipped with the Lie bracket extended from $[x \otimes a, y \otimes b] = [x, y] \otimes ab$. Given an $(L, A)$-module $\green{V}$, the corresponding $L \otimes A$-action on $\green{V}$ is given by $(x \otimes a) \cdot \green{v} = \damapmult{a}(x \otimes \green{v})$ for $x \in L$, $a \in A$, and $\green{v} \in \green{V}$. Conversely, if $\green{V}$ is an $L \otimes A$-module, then the corresponding $(L, A)$-module has action morphisms given by $\damapmult{a}(x \otimes \green{v}) = (x \otimes a) \cdot \green{v}$. Definition \ref{def:aMapModule} allows us to study the analogues of these modules in categories where we cannot directly construct an object to play the role of $L \otimes A$. For instance, in Section \ref{s:diagrammaticCategories} we will discuss $(L, \kk[t])$-modules in the oriented Brauer category.

\begin{lem} \label{lem:amapGivesLmodule}
	Suppose $e \in A$ is idempotent and that $\green{V}$ is an $(L, A)$-module. Then $\left(\green{V}, \damapmult{e}\right)$ is an $L$-module. As a special case, if $A$ is unital, then $\left(\green{V}, \damapmult{1}\right)$ is an $L$-module that we call the \emph{underlying $L$-module} of $\green{V}$.
\end{lem}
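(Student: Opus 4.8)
The plan is to equip $\green{V}$ with the structure morphism $\dmodulemult := \damapmult{e} \colon L \otimes \green{V} \to \green{V}$ and check that the single defining axiom of an $L$-module, namely \eqref{eq:lmod}, is satisfied; nothing else needs to be verified, since morphisms and the categorical structure are inherited directly.

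First I would observe that \eqref{eq:lmod} and the compatibility relation \eqref{eq:amapcompatibility} are drawn with the very same underlying string diagrams — the same trivalent vertices in the same positions, with the same implicit symmetric braiding in the final term on the right-hand side — the only difference being that \eqref{eq:amapcompatibility} carries algebra-element tokens on its action vertices. Concretely, reading \eqref{eq:lmod} with $\dmodulemult$ taken to be $\damapmult{e}$, its left-hand side becomes the left-hand side of \eqref{eq:amapcompatibility} with $a = b = e$; its second right-hand term becomes the second right-hand term of \eqref{eq:amapcompatibility} with $a = b = e$; and its first right-hand term becomes the first right-hand term of \eqref{eq:amapcompatibility} but with the token $ee$ on the action vertex.

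Next I would invoke idempotency of $e$: since $ee = e$ in $A$, we have $\damapmult{ee} = \damapmult{e}$, so that token $ee$ is simply $e$. Thus specializing \eqref{eq:amapcompatibility} to $a = b = e$ yields \eqref{eq:lmod} for $\dmodulemult = \damapmult{e}$ verbatim, so $\left(\green{V}, \damapmult{e}\right)$ is an $L$-module. For the stated special case, if $A$ is unital then $1 \in A$ satisfies $1 \cdot 1 = 1$ and is hence idempotent, and applying the above with $e = 1$ produces the underlying $L$-module.

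Since the argument is just a specialization of \eqref{eq:amapcompatibility} combined with the identity $e^2 = e$, there is no real obstacle. The only point requiring a moment's care is to confirm that the two relations genuinely share the same diagram (braiding included), so that substituting $a = b = e$ is a legitimate diagram-level equality rather than a mere numerical coincidence; this is immediate from inspecting the two pictures side by side.
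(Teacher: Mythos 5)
Your proposal is correct and follows essentially the same route as the paper's proof: specialize \eqref{eq:amapcompatibility} to $a=b=e$ and use $e^2=e$ (via linearity, $\damapmult{ee}=\damapmult{e}$) to recover \eqref{eq:lmod} verbatim. The paper's version is just a more compressed rendering of the same argument.
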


\begin{proof}
	Using \eqref{eq:amapcompatibility} and the assumption $e^2 = e$, we have $\begin{tikzpicture}[anchorbase]
	\draw[bmod] (0.25, 0) -- (0.5, 0.25);
	\draw[bmod] (0, 0) -- (0, 0.25) -- (0.25, 0.5);
	\draw[gmod] (0.75, 0) -- (0.5, 0.25) -- (0.25, 0.5) -- (0.25, 0.75);
	\btoken{0.5, 0.25}{west}{e};
	\btoken{0.25, 0.5}{west}{e};
	\end{tikzpicture}
	\ =\
	\begin{tikzpicture}[anchorbase]
	\draw[bmod] (0.5, 0) -- (0.25, 0.25);
	\draw[bmod] (0, 0) -- (0.25, 0.25) -- (0.5, 0.5);
	\draw[gmod] (0.75, 0) -- (0.75, 0.25) -- (0.5, 0.5) -- (0.5, 0.75);
	\btoken[2px]{0.5, 0.5}{west}{e};
	\end{tikzpicture}
	\ +\
	\begin{tikzpicture}[anchorbase]
	\draw[bmod] (0, -0.25) -- (0.25, 0) -- (0.5, 0.25);
	\draw[bmod] (0.25, -0.25) -- (0, 0) -- (0, 0.25) -- (0.25, 0.5);
	\draw[gmod] (0.75, -0.25) -- (0.75, 0) -- (0.5, 0.25) -- (0.25, 0.5) -- (0.25, 0.75);
	\btoken{0.5, 0.25}{west}{e};
	\btoken[2px]{0.25, 0.5}{west}{e};
	\end{tikzpicture}\ ,$ which shows that \eqref{eq:lmod} is satisfied.
\end{proof}

\begin{eg} \label{ex:kMapLModule}
	Consider $\kk$ as a one-dimensional $\kk$-algebra. Then $(L, \kk)$-modules are equivalent to $L$-modules. Given an $(L, \kk)$-module $\green{V}$, the associated $L$-module is $\left(\green{V}, \damapmult{1}\right)$, as in Lemma \ref{lem:amapGivesLmodule}. Conversely, if $\green{V}$ is an $L$-module, then setting $\damapmult{a} = a\ \dmodulemult$ for each $a \in \kk$ yields the associated $(L, \kk)$-module.
\end{eg}

\begin{eg} \label{ex:trivialModule}
	Let $\green{V} \in \ob(\CC)$, and for each $a \in A$, define $\damapmult{a}$ to be the zero map. With these action morphisms, $\green{V}$ is an $(L, A)$-module. This is known as the \emph{trivial $(L, A)$-module structure} on $\green{V}$. If $\green{V}, \red{W} \in \ob(\CC)$ and we equip both with the trivial $(L, A)$-module structure, then a morphism of $(L, A)$-modules $\green{V} \to \red{W}$ is simply a $\CC$-morphism $\green{V} \to \red{W}$. Hence the full subcategory of $(L, A)\dashmod$ consisting of trivial $(L, A)$-modules is isomorphic to $\CC$.
\end{eg}

\begin{eg}[Evaluation modules] \label{ex:evaluationModule}
	Let $\green{V}$ be an $L$-module, and $\mathfrak{m} \subseteq A$ a maximal ideal of $A$ such that $A/\mathfrak{m} \isom \kk$ (as $\kk$-algebras). Let $\ev_\mathfrak{m} \colon A \to \kk$ denote the composition of the canonical projection $A \to A/\mathfrak{m}$ and the isomorphism $A/\mathfrak{m} \isom \kk$. We call $\ev_\mathfrak{m}$ \emph{evaluation at $\mathfrak{m}$}. This name references the case where $A = \kk[t]$ and $\mathfrak{m} = \ngle{t - k}$ for some $k \in \kk$; then $\ev_{\mathfrak{m}}$ is given by evaluation at $k$, i.e.\ $f(t) \mapsto f(k)$.
	
	For each $a \in A$, define $\damapmult{a} = \ev_\mathfrak{m}(a) \dmodulemult$. With these action morphisms, $\green{V}$ is an $(L, A)$-module. We call such modules \emph{evaluation modules}. To see that \eqref{eq:amapcompatibility} holds, let $a, b \in A$. Then:
	\begin{multline*}
	\begin{tikzpicture}[anchorbase]
	\draw[bmod] (0.25, 0) -- (0.5, 0.25);
	\draw[bmod] (0, 0) -- (0, 0.25) -- (0.25, 0.5);
	\draw[gmod] (0.75, 0) -- (0.5, 0.25) -- (0.25, 0.5) -- (0.25, 0.75);
	\btoken{0.5, 0.25}{west}{b};
	\btoken{0.25, 0.5}{west}{a};
	\end{tikzpicture} = \ev_\mathfrak{m}(a)\ev_\mathfrak{m}(b)\ \begin{tikzpicture}[anchorbase]
	\draw[bmod] (0.25, 0) -- (0.5, 0.25);
	\draw[bmod] (0, 0) -- (0, 0.25) -- (0.25, 0.5);
	\draw[gmod] (0.75, 0) -- (0.5, 0.25) -- (0.25, 0.5) -- (0.25, 0.75);
	\end{tikzpicture} \overset{\eqref{eq:lmod}}{=}\ev_\mathfrak{m}(a)\ev_\mathfrak{m}(b)\ \begin{tikzpicture}[anchorbase]
	\draw[bmod] (0.5, 0) -- (0.25, 0.25);
	\draw[bmod] (0, 0) -- (0.25, 0.25) -- (0.5, 0.5);
	\draw[gmod] (0.75, 0) -- (0.75, 0.25) -- (0.5, 0.5) -- (0.5, 0.75);
	\end{tikzpicture}
	\ +\
	\ev_\mathfrak{m}(a)\ev_\mathfrak{m}(b)\ \begin{tikzpicture}[anchorbase]
	\draw[bmod] (0, -0.25) -- (0.25, 0) -- (0.5, 0.25);
	\draw[bmod] (0.25, -0.25) -- (0, 0) -- (0, 0.25) -- (0.25, 0.5);
	\draw[gmod] (0.75, -0.25) -- (0.75, 0) -- (0.5, 0.25) -- (0.25, 0.5) -- (0.25, 0.75);
	\end{tikzpicture} \\
	=
	\ev_\mathfrak{m}(ab)\ \begin{tikzpicture}[anchorbase]
	\draw[bmod] (0.5, 0) -- (0.25, 0.25);
	\draw[bmod] (0, 0) -- (0.25, 0.25) -- (0.5, 0.5);
	\draw[gmod] (0.75, 0) -- (0.75, 0.25) -- (0.5, 0.5) -- (0.5, 0.75);
	\end{tikzpicture}
	\ +\
	\ev_\mathfrak{m}(a)\ev_\mathfrak{m}(b)\ \begin{tikzpicture}[anchorbase]
	\draw[bmod] (0, -0.25) -- (0.25, 0) -- (0.5, 0.25);
	\draw[bmod] (0.25, -0.25) -- (0, 0) -- (0, 0.25) -- (0.25, 0.5);
	\draw[gmod] (0.75, -0.25) -- (0.75, 0) -- (0.5, 0.25) -- (0.25, 0.5) -- (0.25, 0.75);
	\end{tikzpicture} 
	= \begin{tikzpicture}[anchorbase]
	\draw[bmod] (0.5, 0) -- (0.25, 0.25);
	\draw[bmod] (0, 0) -- (0.25, 0.25) -- (0.5, 0.5);
	\draw[gmod] (0.75, 0) -- (0.75, 0.25) -- (0.5, 0.5) -- (0.5, 0.75);
	\btoken[2px]{0.5, 0.5}{west}{ab};
	\end{tikzpicture}
	\ +\
	\begin{tikzpicture}[anchorbase]
	\draw[bmod] (0, -0.25) -- (0.25, 0) -- (0.5, 0.25);
	\draw[bmod] (0.25, -0.25) -- (0, 0) -- (0, 0.25) -- (0.25, 0.5);
	\draw[gmod] (0.75, -0.25) -- (0.75, 0) -- (0.5, 0.25) -- (0.25, 0.5) -- (0.25, 0.75);
	\btoken{0.5, 0.25}{west}{a};
	\btoken[2px]{0.25, 0.5}{west}{b};
	\end{tikzpicture},
	\end{multline*}
	as desired.
\end{eg}

\begin{defin}
	An $(L, \kk[t])$-module is called a \emph{current $L$-module}. An $(L, \kk[t, t^{-1}])$-module is called a \emph{loop $L$-module}.
\end{defin}

\begin{eg}[Induced modules] \label{ex:evaluationModule2}
	Let $\green{V}$ be an $L$-module and $\phi \colon A \to \End_{L\dashmod}(\green{V})$ a homomorphism of $\kk$-algebras. For each $a \in A$, set $\begin{tikzpicture}[anchorbase]
	\draw[gmod] (0, 0) -- (0, 0.5);
	\btoken{0, 0.25}{west}{a};
	\end{tikzpicture} = \begin{tikzpicture}[anchorbase]
	\draw[gmod] (0, 0) -- (0, 1);
	\coupon{0, 0.5}{\phi(a)};
	\end{tikzpicture}$ and define $\damapmult{a} = \begin{tikzpicture}[anchorbase]
	\draw[bmod] (0, 0) -- (0.25, 0.25);
	\draw[gmod] (0.5, 0) -- (0.25, 0.25) -- (0.25, 0.75);
	\btoken{0.25, 0.5}{west}{a};
	\end{tikzpicture}$. Equipped with these action morphisms, $\green{V}$ becomes an $(L, A)$-module that we call the \emph{$(L, A)$-module induced by $\phi$}. To see that \eqref{eq:amapcompatibility} holds, let $a, b \in A$. Since $\phi$ is an algebra homomorphism and $A$ is commutative, $\begin{tikzpicture}[anchorbase]
	\draw[gmod] (0, 0) -- (0, 0.75);
	\btoken{0, 0.25}{west}{b};
	\btoken{0, 0.5}{west}{a};
	\end{tikzpicture} = \begin{tikzpicture}[anchorbase]
	\draw[gmod] (0, 0) -- (0, 0.5);
	\btoken{0, 0.25}{west}{ab};
	\end{tikzpicture} = \begin{tikzpicture}[anchorbase]
	\draw[gmod] (0, 0) -- (0, 0.75);
	\btoken{0, 0.25}{west}{a};
	\btoken{0, 0.5}{west}{b};
	\end{tikzpicture}$. We compute:
	\begin{multline*}
	\begin{tikzpicture}[anchorbase]
	\draw[bmod] (0.25, 0) -- (0.5, 0.25);
	\draw[bmod] (0, 0) -- (0, 0.25) -- (0.25, 0.5);
	\draw[gmod] (0.75, 0) -- (0.5, 0.25) -- (0.25, 0.5) -- (0.25, 0.75);
	\btoken{0.5, 0.25}{west}{b};
	\btoken{0.25, 0.5}{west}{a};
	\end{tikzpicture} 
	=
	\begin{tikzpicture}[anchorbase]
	\draw[bmod] (0.25, 0) -- (0.5, 0.25);
	\draw[bmod] (0, 0) -- (0, 0.25) -- (0.25, 0.5);
	\draw[gmod] (0.75, 0) -- (0.5, 0.25) -- (0.25, 0.5) -- (0.25, 1);
	\btoken{0.375, 0.375}{west}{b};
	\btoken{0.25, 0.75}{west}{a};
	\end{tikzpicture}
	\overset{\eqref{eq:lmodMorphism}}{=}
	\begin{tikzpicture}[anchorbase]
	\draw[bmod] (0.25, 0) -- (0.5, 0.25);
	\draw[bmod] (0, 0) -- (0, 0.25) -- (0.25, 0.5);
	\draw[gmod] (0.75, 0) -- (0.5, 0.25) -- (0.25, 0.5) -- (0.25, 1);
	\btoken{0.25, 0.75}{west}{ab};
	\end{tikzpicture}
	\overset{\eqref{eq:lmod}}{=}
	\begin{tikzpicture}[anchorbase]
	\draw[bmod] (0.5, 0) -- (0.25, 0.25);
	\draw[bmod] (0, 0) -- (0.25, 0.25) -- (0.5, 0.5);
	\draw[gmod] (0.75, 0) -- (0.75, 0.25) -- (0.5, 0.5) -- (0.5, 1);
	\btoken{0.5, 0.75}{west}{ab};
	\end{tikzpicture}
	\ +\
	\begin{tikzpicture}[anchorbase]
	\draw[bmod] (0, -0.25) -- (0.25, 0) -- (0.5, 0.25);
	\draw[bmod] (0.25, -0.25) -- (0, 0) -- (0, 0.25) -- (0.25, 0.5);
	\draw[gmod] (0.75, -0.25) -- (0.75, 0) -- (0.5, 0.25) -- (0.25, 0.5) -- (0.25, 1);
	\btoken{0.25, 0.75}{west}{ab};
	\end{tikzpicture}
	\overset{\eqref{eq:lmodMorphism}}{=}
	\begin{tikzpicture}[anchorbase]
	\draw[bmod] (0.5, 0) -- (0.25, 0.25);
	\draw[bmod] (0, 0) -- (0.25, 0.25) -- (0.5, 0.5);
	\draw[gmod] (0.75, 0) -- (0.75, 0.25) -- (0.5, 0.5) -- (0.5, 1);
	\btoken{0.5, 0.75}{west}{ab};
	\end{tikzpicture}
	\ +\
	\begin{tikzpicture}[anchorbase]
	\draw[bmod] (0, -0.25) -- (0.25, 0) -- (0.5, 0.25);
	\draw[bmod] (0.25, -0.25) -- (0, 0) -- (0, 0.25) -- (0.25, 0.5);
	\draw[gmod] (0.75, -0.25) -- (0.75, 0) -- (0.5, 0.25) -- (0.25, 0.5) -- (0.25, 1);
	\btoken{0.25, 0.75}{west}{b};
	\btoken{0.375, 0.375}{west}{a};
	\end{tikzpicture}
	=
	\begin{tikzpicture}[anchorbase]
	\draw[bmod] (0.5, 0) -- (0.25, 0.25);
	\draw[bmod] (0, 0) -- (0.25, 0.25) -- (0.5, 0.5);
	\draw[gmod] (0.75, 0) -- (0.75, 0.25) -- (0.5, 0.5) -- (0.5, 0.75);
	\btoken[2px]{0.5, 0.5}{west}{ab};
	\end{tikzpicture}
	\ +\
	\begin{tikzpicture}[anchorbase]
	\draw[bmod] (0, -0.25) -- (0.25, 0) -- (0.5, 0.25);
	\draw[bmod] (0.25, -0.25) -- (0, 0) -- (0, 0.25) -- (0.25, 0.5);
	\draw[gmod] (0.75, -0.25) -- (0.75, 0) -- (0.5, 0.25) -- (0.25, 0.5) -- (0.25, 0.75);
	\btoken{0.5, 0.25}{west}{a};
	\btoken{0.25, 0.5}{west}{b};
	\end{tikzpicture}\ ,
	\end{multline*}
	as desired.
	
	In the case $A = \kk[t]$, a homomorphism $\phi \colon \kk[t] \to \End_{L\dashmod}(\green{V})$ is determined by specifying the image of $t$. For $\dmoduleendo \in \End_{L\dashmod}(\green{V})$, we define the \emph{current $L$-module induced by $\dmoduleendo$} to be the the $(L, \kk[t])$-module induced by the homomorphism $\phi$ satisfying $\phi(t) = \dmoduleendo$. If $\dmoduleendo = k\ \begin{tikzpicture}[anchorbase]
	\draw[gmod] (0, 0) -- (0, 0.5);
	\end{tikzpicture}$ for some $k \in \kk$, we call the current module induced by $\dmoduleendo$ the \emph{evaluation (current) module of $\green{V}$ at $k$}. One can similarly define the \emph{loop $L$-module induced by $\dmoduleendo$} if $\dmoduleendo$ is invertible, and the \emph{evaluation (loop) module of $\green{V}$ at $k$} when $k \neq 0$. Current and loop evaluation modules are special cases of the evaluation $(L, A)$-modules described in Example \ref{ex:evaluationModule}. The former is the case $A = \kk[t], \mathfrak{m} = \ngle{t - k}$, and the latter is the case $A = \kk[t, t^{-1}], \mathfrak{m} = \ngle{t - k}$.
	
	Looking concretely at the category $\CC = \kkvec$, the current $L$-module induced by an endomorphism $f \in \End_{L\dashmod}(\green{V})$ has $L \otimes \kk[t]$-action given by $x \otimes t^n \cdot \green{v} = f^n(x \cdot \green{v})$ for all $x \in L, \green{v} \in \green{V}$, and $n \in \N$.
\end{eg}

\begin{defin} \label{def:gradedLieAlgebraObject}
	Let $\Gamma$ be an abelian group. A \emph{$\Gamma$-graded Lie algebra} (in $\CC$) is a collection of objects $\left\{L_g  : g \in \Gamma\right\}$ together with \emph{graded Lie bracket morphisms} $\left\{\begin{tikzpicture}[anchorbase]
	\draw[bmod] (0, 0) -- (0.25, 0.25) -- (0.25, 0.5);
	\draw[bmod] (0.5, 0) -- (0.25, 0.25);
	\node[anchor=north] at (0, 0) {$\scriptstyle g$};
	\node[anchor=north] at (0.5, 0) {$\scriptstyle h$};
	\node[anchor=south] at (0.25, 0.5) {$\scriptstyle g + h$};
	\end{tikzpicture} : g, h \in \Gamma \right\}$ that satisfy the following identities for all $g, h, i \in \Gamma$. Note that here and throughout the paper, we label graded strands with $g$, $h$, etc.\ rather than $L_g, L_h$, etc.\ to avoid visual clutter.
	\begin{equation} \label{eq:gradedSkewSymmetry} \tag{G-SKEW}
	\begin{tikzpicture}[anchorbase]
	\draw[bmod] (0, 0) -- (0.5, 0.5) -- (0.25, 0.75) -- (0.25, 1);
	\draw[bmod] (0.5, 0) -- (0, 0.5) -- (0.25, 0.75);
	\node[anchor=north] at (0, 0) {$\scriptstyle g$};
	\node[anchor=north] at (0.5, 0) {$\scriptstyle h$};
	\node[anchor=south] at (0.25, 1) {$\scriptstyle g + h$};
	\end{tikzpicture}
	\quad = \quad
	- \begin{tikzpicture}[anchorbase]
	\draw[bmod] (0, 0) -- (0.25, 0.25) -- (0.25, 0.5);
	\draw[bmod] (0.5, 0) -- (0.25, 0.25);
	\node[anchor=north] at (0, 0) {$\scriptstyle g$};
	\node[anchor=north] at (0.5, 0) {$\scriptstyle h$};
	\node[anchor=south] at (0.25, 0.5) {$\scriptstyle g + h$};
	\end{tikzpicture},
	\end{equation}
	\begin{equation} \label{eq:gradedJacobi} \tag{G-JACOBI}
	\begin{tikzpicture}[anchorbase]
	\draw[bmod] (0.25, 0) -- (0.5, 0.25);
	\draw[bmod] (0, 0) -- (0, 0.25) -- (0.25, 0.5);
	\draw[bmod] (0.75, 0) -- (0.5, 0.25) -- (0.25, 0.5) -- (0.25, 0.75);
	\node[anchor=north] at (0, 0) {$\scriptstyle g$};
	\node[anchor=north] at (0.25, 0) {$\scriptstyle h$};
	\node[anchor=north] at (0.75, 0) {$\scriptstyle i$};
	\node[anchor=south] at (0.25, 0.75) {$\scriptstyle g + h + i$};
	\end{tikzpicture}
	\ =\
	\begin{tikzpicture}[anchorbase]
	\draw[bmod] (0.5, 0) -- (0.25, 0.25);
	\draw[bmod] (0, 0) -- (0.25, 0.25) -- (0.5, 0.5);
	\draw[bmod] (0.75, 0) -- (0.75, 0.25) -- (0.5, 0.5) -- (0.5, 0.75);
	\node[anchor=north] at (0, 0) {$\scriptstyle g$};
	\node[anchor=north] at (0.5, 0) {$\scriptstyle h$};
	\node[anchor=north] at (0.75, 0) {$\scriptstyle i$};
	\node[anchor=south] at (0.5, 0.75) {$\scriptstyle g + h + i$};
	\end{tikzpicture}
	\ +\
	\begin{tikzpicture}[anchorbase]
	\draw[bmod] (0, -0.25) -- (0.25, 0) -- (0.5, 0.25);
	\draw[bmod] (0.25, -0.25) -- (0, 0) -- (0, 0.25) -- (0.25, 0.5);
	\draw[bmod] (0.75, -0.25) -- (0.75, 0) -- (0.5, 0.25) -- (0.25, 0.5) -- (0.25, 0.75);
	\node[anchor=north] at (0, -0.25) {$\scriptstyle g$};
	\node[anchor=north] at (0.25, -0.25) {$\scriptstyle h$};
	\node[anchor=north] at (0.75, -0.25) {$\scriptstyle i$};
	\node[anchor=south] at (0.25, 0.75) {$\scriptstyle g + h + i$};
	\end{tikzpicture}\ .
	\end{equation}
	If $L$ and $\red{M}$ are two $\Gamma$-graded Lie algebras in $\CC$ (for the same group $\Gamma$), a morphism from $L$ to $\red{M}$ is a collection of $\CC$-morphisms $\left\{\begin{tikzpicture}[anchorbase]
	\draw[bmod] (0, 0) -- (0, 0.25);
	\draw[rmod] (0, 0.25) -- (0, 0.5);
	\bdot{(0, 0.25)};
	\node at (0, -0.25) {$\scriptstyle g$};
	\node at (0, 0.75) {$\scriptstyle g$};
	\end{tikzpicture} : g \in \Gamma \right\}$ satisfying the following identity for all $g, h \in \Gamma$:
	\begin{equation} \label{eq:gradedLieObjectMorphism}
	\begin{tikzpicture}[anchorbase]
	\draw[bmod] (0, 0) -- (0, 0.25);
	\draw[bmod] (0.5, 0) -- (0.5, 0.25);
	\draw[rmod] (0, 0.25) -- (0.25, 0.5) -- (0.25, 0.75);
	\draw[rmod] (0.5, 0.25) -- (0.25, 0.5);
	\bdot{(0, 0.25)};
	\bdot{(0.5, 0.25)};
	\node[anchor=north] at (0, 0) {$\scriptstyle g$};
	\node[anchor=north] at (0.5, 0) {$\scriptstyle h$};
	\node[anchor=south] at (0.25, 0.75) {$\scriptstyle g + h$};
	\end{tikzpicture}
	\ =\
	\begin{tikzpicture}[anchorbase]
	\draw[bmod] (0, 0) -- (0.25, 0.25) -- (0.25, 0.5);
	\draw[bmod] (0.5, 0) -- (0.25, 0.25);
	\draw[rmod] (0.25, 0.5) -- (0.25, 0.75);
	\bdot{(0.25, 0.5)};
	\node[anchor=north] at (0, 0) {$\scriptstyle g$};
	\node[anchor=north] at (0.5, 0) {$\scriptstyle h$};
	\node[anchor=south] at (0.25, 0.75) {$\scriptstyle g + h$};
	\end{tikzpicture}\ .
	\end{equation}
\end{defin}

The following is an alternative definition for graded Lie algebras that can be used in the context of additive categories and finite abelian groups. This additive formulation more directly corresponds to the usual definition of graded Lie algebras in $\kkvec$.

\begin{defin} \label{def:additiveGradedLieAlgebraObject}
	Suppose $\CC$ is additive (in addition to being symmetric monoidal and $\kk$-linear), and let $\Gamma$ be a finite abelian group. An \emph{additive $\Gamma$-graded Lie algebra} is a Lie algebra (as specified in Definition \ref{def:lieObject}) $L$ such that $L = \bigoplus\limits_{g \in \Gamma} \green{L_g}$ for some objects $\green{L_g} \in \ob(\CC)$, and $\begin{tikzpicture}[anchorbase]
	\draw[gmod] (0, 0) -- (0, 0.25);
	\draw[bmod] (0, 0.25) -- (0.25, 0.5);
	\draw[gmod] (0.5, 0) -- (0.5, 0.25);
	\draw[bmod] (0.5, 0.25) -- (0.25, 0.5) -- (0.25, 0.75);
	\draw[gmod] (0.25, 0.75) -- (0.25, 1);
	\triup{(0, 0.25)}{east}{g};
	\triup{(0.5, 0.25)}{west}{h};
	\tridown{(0.25, 0.75)}{west}{i};
	\end{tikzpicture} = 0$ whenever $i \neq g + h$ (recall that these triangles denote inclusion and projection maps; see Section \ref{ss:diagrammatics} for details).
\end{defin}

If $\CC$ is additive and $\Gamma$ is a finite abelian group, given a $\Gamma$-graded Lie algebra $\{\green{L_g} : g \in \Gamma \}$ in the sense of Definition \ref{def:gradedLieAlgebraObject}, the corresponding additive $\Gamma$-graded Lie algebra is $L := \bigoplus\limits_{g \in \Gamma} \green{L_g}$, with Lie bracket defined by $\dmult = \summ_{g, h \in \Gamma}
\begin{tikzpicture}[anchorbase]
\draw[bmod] (0, 0) -- (0, 0.25);
\draw[gmod] (0, 0.25) -- (0.25, 0.5);
\draw[bmod] (0.5, 0) -- (0.5, 0.25);
\draw[gmod] (0.5, 0.25) -- (0.25, 0.5) -- (0.25, 0.75);
\draw[bmod] (0.25, 0.75) -- (0.25, 1);
\tridown{(0, 0.25)}{east}{g};
\tridown{(0.5, 0.25)}{west}{h};
\triup{(0.25, 0.75)}{west}{g + h};
\end{tikzpicture}$. Conversely, if $L = \bigoplus\limits_{g \in \Gamma} \green{L_g}$ is an additive $\Gamma$-graded Lie algebra, the corresponding (non-additive) $\Gamma$-graded Lie algebra is $\{\green{L_g} : g \in \Gamma \}$, with graded Lie brackets given by $\begin{tikzpicture}[anchorbase]
\draw[gmod] (0, 0) -- (0.25, 0.25) -- (0.25, 0.5);
\draw[gmod] (0.5, 0) -- (0.25, 0.25);
\node[anchor=north] at (0, 0) {$\scriptstyle g$};
\node[anchor=north] at (0.5, 0) {$\scriptstyle h$};
\node[anchor=south] at (0.25, 0.5) {$\scriptstyle g + h$};
\end{tikzpicture} = \begin{tikzpicture}[anchorbase]
\draw[gmod] (0, 0) -- (0, 0.25);
\draw[bmod] (0, 0.25) -- (0.25, 0.5);
\draw[gmod] (0.5, 0) -- (0.5, 0.25);
\draw[bmod] (0.5, 0.25) -- (0.25, 0.5) -- (0.25, 0.75);
\draw[gmod] (0.25, 0.75) -- (0.25, 1);
\triup{(0, 0.25)}{east}{g};
\triup{(0.5, 0.25)}{west}{h};
\tridown{(0.25, 0.75)}{west}{g + h};
\end{tikzpicture}$ for $g, h \in \Gamma$.

The following definition introduces modules for graded Lie algebras. Note that these modules are not themselves graded; the idea is that we decompose the action of $L$ on $\green{V}$ into several action morphisms, according to the $\Gamma$-grading on $L$. This decomposition will be used to construct equivariant evaluation modules in Proposition \ref{prop:equivariantEvaluationModule}. Equivariant evaluation modules are a categorical generalization of modules for equivariant map algebras $(\g \otimes A)^\Gamma$, which we will discuss in more detail shortly.

\begin{defin} \label{def:gradedLModule}
	Let $\Gamma$ be an abelian group and $L$ a $\Gamma$-graded Lie algebra. An \emph{$L$-module} is an object $\green{V}$ equipped with a family of morphisms $\left\{\begin{tikzpicture}[anchorbase]
	\draw[bmod] (0, 0) -- (0.25, 0.25);
	\draw[gmod] (0.5, 0) -- (0.25, 0.25) -- (0.25, 0.5);
	\node[anchor=north] at (0, 0) {$\scriptstyle g$};
	%Phantom top label to get correct vertical positioning.
	\node[anchor=south] at (0.25, 0.5) {$\vphantom{\scriptstyle a}$};
	\end{tikzpicture} : g \in \Gamma \right\}$ satisfying the following identity for all $g, h \in \Gamma$:
	\begin{equation} \label{eq:gradedLMod}
	\begin{tikzpicture}[anchorbase]
	\draw[bmod] (0.25, 0) -- (0.5, 0.25);
	\draw[bmod] (0, 0) -- (0, 0.25) -- (0.25, 0.5);
	\draw[gmod] (0.75, 0) -- (0.5, 0.25) -- (0.25, 0.5) -- (0.25, 0.75);
	\node[anchor=north] at (0, 0) {$\scriptstyle g$};
	\node[anchor=north] at (0.25, 0) {$\scriptstyle h$};
	%Phantom top label to get correct vertical positioning.
	\node[anchor=south] at (0.25, 0.75) {$\vphantom{\scriptstyle a}$};
	\end{tikzpicture}
	\ =\
	\begin{tikzpicture}[anchorbase]
	\draw[bmod] (0.5, 0) -- (0.25, 0.25);
	\draw[bmod] (0, 0) -- (0.25, 0.25) -- (0.5, 0.5);
	\draw[gmod] (0.75, 0) -- (0.75, 0.25) -- (0.5, 0.5) -- (0.5, 0.75);
	\node[anchor=north] at (0, 0) {$\scriptstyle g$};
	\node[anchor=north] at (0.5, 0) {$\scriptstyle h$};
	%Phantom top label to get correct vertical positioning.
	\node[anchor=south] at (0.5, 0.75) {$\vphantom{\scriptstyle a}$};
	\end{tikzpicture}
	\ +\
	\begin{tikzpicture}[anchorbase]
	\draw[bmod] (0, -0.25) -- (0.25, 0) -- (0.5, 0.25);
	\draw[bmod] (0.25, -0.25) -- (0, 0) -- (0, 0.25) -- (0.25, 0.5);
	\draw[gmod] (0.75, -0.25) -- (0.75, 0) -- (0.5, 0.25) -- (0.25, 0.5) -- (0.25, 0.75);
	\node[anchor=north] at (0, -0.25) {$\scriptstyle g$};
	\node[anchor=north] at (0.25, -0.25) {$\scriptstyle h$};
	%Phantom top label to get correct vertical positioning.
	\node[anchor=south] at (0.25, 0.75) {$\vphantom{\scriptstyle a}$};
	\end{tikzpicture}\ .
	\end{equation}
\end{defin}

\begin{eg} \label{ex:gradedLieObject}
	Let $\g$ be a Lie algebra in $\kkvec$ and $\Gamma$ a finite abelian group acting on $\g$ via automorphisms. (For instance, if $\alpha \colon \g \to \g$ is an automorphism of order $n \in \N$, then one could take $\Gamma = \Z_n$, with a generator of $\Z_n$ acting as $\alpha$.) We get the decomposition $\g = \bigoplus\limits_{\chi \in \hat{\Gamma}} \g_{\chi}$, where $\g_{\chi}$ denotes the $\chi$-isotypic subspace of $\g$. Since $\Gamma$ acts via automorphisms, we have $[\g_{\chi}, \g_{\chi'}] \subseteq \g_{\chi \chi'}$ for all characters $\chi, \chi' \in \hat{\Gamma}$. Defining the graded Lie bracket morphisms $\begin{tikzpicture}[anchorbase]
	\draw[bmod] (0, 0) -- (0.25, 0.25) -- (0.25, 0.5);
	\draw[bmod] (0.5, 0) -- (0.25, 0.25);
	\node[anchor=north] at (0, 0) {$\scriptstyle \chi$};
	\node[anchor=north] at (0.5, 0) {$\scriptstyle \chi'$};
	\node[anchor=south] at (0.25, 0.5) {$\scriptstyle \chi\chi'$};
	\end{tikzpicture} \colon \g_{\chi} \otimes \g_{\chi'} \to \g_{\chi \chi'}$ to be the restrictions of the Lie bracket for $\g$ with the indicated domain, we thus have that $\g$ is a $\hat{\Gamma}$-graded Lie algebra in $\kkvec$. A module for this graded Lie algebra is simply a module $\green{V}$ for $\g$ considered as an ungraded Lie algebra, with the module action decomposed into the morphisms $\begin{tikzpicture}[anchorbase]
	\draw[bmod] (0, 0) -- (0.25, 0.25);
	\draw[gmod] (0.5, 0) -- (0.25, 0.25) -- (0.25, 0.5);
	\node[anchor=north] at (0, 0) {$\scriptstyle \chi$};
	%Phantom top label to get correct vertical positioning.
	\node[anchor=south] at (0.25, 0.5) {$\vphantom{\scriptstyle a}$};
	\end{tikzpicture} \colon \g_\chi \otimes \green{V} \to \green{V}$ obtained by restricting the overall action $\g \otimes \green{V} \to \green{V}$.
\end{eg}

Throughout the rest of Section \ref{s:lieAlgebraObjects}, let $\Gamma$ be an abelian group, $L$ a $\hat{\Gamma}$-graded Lie algebra, and $A$ a commutative $\kk$-algebra. For use in examples, let $\g$ be a Lie algebra in $\kkvec$. Suppose that $\Gamma$ acts on $A$ and $\g$ via automorphisms, inducing the decompositions $A = \bigoplus\limits_{\chi \in \hat{\Gamma}} A_\chi$ and $\g = \bigoplus\limits_{\chi \in \hat{\Gamma}} \g_\chi$, where $A_\chi$ denotes the $\chi$-isotypic subspace of $A$, and similarly for $\g$.

\begin{defin} \label{def:equivariantModule}
	An \emph{equivariant $(\Gamma, L, A)$-module} is an object $\green{V}$ equipped with a family of \emph{action morphisms}, $\left\{ \deamapmult{a}{f} : f \in \hat{\Gamma}, a \in A_{f^{-1}} \right\}$, satisfying the following identity for all $f \in \hat{\Gamma}$, $a, b \in A_{f^{-1}}$, and $\lambda, \mu \in \kk$:
	\begin{equation} \label{eq:equivariantAMapLinearity}
	\deamapmult{\lambda a + \mu b}{f} \ =\ \lambda \deamapmult{a}{f} + \mu \deamapmult{b}{f},
	\end{equation}
	and the following identity for all $f, g \in \hat{\Gamma}$, $a \in A_{f^{-1}}$, and $b \in A_{g^{-1}}$:
	\begin{equation} \label{eq:equivariantAMapCompatibility}
	\begin{tikzpicture}[anchorbase]
	\draw[bmod] (0.25, 0) -- (0.5, 0.25);
	\draw[bmod] (0, 0) -- (0, 0.25) -- (0.25, 0.5);
	\draw[gmod] (0.75, 0) -- (0.5, 0.25) -- (0.25, 0.5) -- (0.25, 0.75);
	\btoken{0.5, 0.25}{west}{b};
	\btoken{0.25, 0.5}{west}{a};
	\node[anchor=north] at (0, 0) {$\scriptstyle f$};
	\node[anchor=north] at (0.25, 0) {$\scriptstyle g$};
	%Phantom top label to get correct vertical positioning.
	\node[anchor=south] at (0.25, 0.75) {$\vphantom{\scriptstyle f}$};
	\end{tikzpicture}
	\ =\
	\begin{tikzpicture}[anchorbase]
	\draw[bmod] (0.5, 0) -- (0.25, 0.25);
	\draw[bmod] (0, 0) -- (0.25, 0.25) -- (0.5, 0.5);
	\draw[gmod] (0.75, 0) -- (0.75, 0.25) -- (0.5, 0.5) -- (0.5, 0.75);
	\btoken[2px]{0.5, 0.5}{west}{ab};
	\node[anchor=north] at (0, 0) {$\scriptstyle f$};
	\node[anchor=north] at (0.5, 0) {$\scriptstyle g$};
	%Phantom top label to get correct vertical positioning.
	\node[anchor=south] at (0.5, 0.75) {$\vphantom{\scriptstyle f}$};
	\end{tikzpicture}
	\ +\
	\begin{tikzpicture}[anchorbase]
	\draw[bmod] (0, -0.25) -- (0.25, 0) -- (0.5, 0.25);
	\draw[bmod] (0.25, -0.25) -- (0, 0) -- (0, 0.25) -- (0.25, 0.5);
	\draw[gmod] (0.75, -0.25) -- (0.75, 0) -- (0.5, 0.25) -- (0.25, 0.5) -- (0.25, 0.75);
	\btoken{0.5, 0.25}{west}{a};
	\btoken[2px]{0.25, 0.5}{west}{b};
	\node[anchor=north] at (0, -0.25) {$\scriptstyle f$};
	\node[anchor=north] at (0.25, -0.25) {$\scriptstyle g$};
	%Phantom top label to get correct vertical positioning.
	\node[anchor=south] at (0.25, 0.75) {$\vphantom{\scriptstyle f}$};
	\end{tikzpicture}\ .
	\end{equation}
	If $\green{V}$ and $\red{W}$ are equivariant $(\Gamma, L, A)$-modules (with respect to the same group $\Gamma$ and the same action of $\Gamma$ on $A$), a morphism from $\green{V} \to \red{W}$ is a $\CC$-morphism $\begin{tikzpicture}[anchorbase]
	\draw[gmod] (0, 0) -- (0, 0.25);
	\draw[rmod] (0, 0.25) -- (0, 0.5);
	\adot{(0, 0.25)};
	\end{tikzpicture}$ satisfying the following identity for all $f \in \hat{\Gamma}$ and $a \in A_{f^{-1}}$:
	\begin{equation} \label{eq:equivariantAMapMorphism}
	\begin{tikzpicture}[anchorbase]
	\draw[bmod] (0, -0.25) -- (0, 0) -- (0.25, 0.25);
	\draw[gmod] (0.5, -0.25) -- (0.5, 0);
	\draw[rmod] (0.5, 0) -- (0.25, 0.25) -- (0.25, 0.5);
	\btoken{0.25, 0.25}{west}{a};
	\adot{(0.5, 0)};
	\node[anchor=north] at (0, -0.25) {$\scriptstyle f$};
	%Phantom top label to get correct vertical positioning.
	\node[anchor=south] at (0.25, 0.5) {$\vphantom{\scriptstyle f}$};
	\end{tikzpicture}
	\ =\
	\begin{tikzpicture}[anchorbase]
	\draw[bmod] (0, 0) -- (0.25, 0.25);
	\draw[gmod] (0.5, 0) -- (0.25, 0.25) -- (0.25, 0.5);
	\draw[rmod] (0.25, 0.5) -- (0.25, 0.75);
	\btoken{0.25, 0.25}{west}{a};
	\adot{(0.25, 0.5)};
	\node[anchor=north] at (0, 0) {$\scriptstyle f$};
	%Phantom top label to get correct vertical positioning.
	\node[anchor=south] at (0.25, 0.75) {$\vphantom{\scriptstyle f}$};
	\end{tikzpicture}\ .
	\end{equation}
	We write $(\Gamma, L, A)\dashmod$ for the category of $(\Gamma, L, A)$-modules. Note that this module category is defined with respect to a fixed choice of $\Gamma$-action on $A$; we omit the action from our notation for the sake of brevity.
\end{defin}

\begin{eg} \label{ex:equivariantMapAlgebra}

Let $(\g \otimes A)^\Gamma = \{\alpha \in \g \otimes A \mid g \cdot \alpha = \alpha \text{ for all } g \in \Gamma \}$ be the Lie subalgebra of $\Gamma$-fixed points of $\g \otimes A$, where $\Gamma$ acts diagonally on simple tensors: $g \cdot (x \otimes a) = (g \cdot x) \otimes (g \cdot a)$. Such a Lie algebra $(\g \otimes A)^\Gamma$ is known as an \emph{equivariant map algebra}. It is straightforward to see that $(\g \otimes A)^\Gamma = \bigoplus\limits_{f \in \hat{\Gamma}}  \g_f \otimes A_{f^{-1}}$.
\details{We have $\g \otimes A = \bigoplus\limits_{f \in \hat{\Gamma}} \bigoplus\limits_{g \in \hat{\Gamma}} \g_f \otimes A_{g^{-1}}$. For all $x \otimes a \in \g_f \otimes A_{g^{-1}}$ and $h \in \Gamma$ we have $h \cdot (x \otimes a) = f(h)g^{-1}(h)x \otimes a = (fg^{-1}(h)x \otimes a$, so each subspace $\g_f \otimes A_{g^{-1}}$ is invariant under the action of $\Gamma$. Hence $(\g \otimes A)^\Gamma$ is spanned by the elements of each $\g_f \otimes A_{g^{-1}}$ that are fixed by $\Gamma$. By the previous calculation, a simple tensor $x \otimes a \in \g_f \otimes A_{g^{-1}}$ is fixed by $\Gamma$ if and only if $fg^{-1} = \id$, i.e.\ $f = g$. Extending by linearity yields the desired result.}
	
Modules for $(\g \otimes A)^\Gamma$ are equivalent to equivariant $(\Gamma, \g, A)$-modules. Given a module $\green{V}$ for $(\g \otimes A)^\Gamma$, the associated equivariant $(\Gamma, \g, A)$-module is defined as follows: for each $f \in \hat{\Gamma}$, $x \in \g_f$, $\green{v} \in \green{V}$, and $a \in A_{f^{-1}}$, set $\deamapmult{a}{f}(x \otimes \green{v}) = (x \otimes a) \cdot \green{v}$, where $\cdot$ denotes the action of $(\g \otimes A)^\Gamma$ on $\green{V}$; note that we have $x \otimes a \in (\g_f \otimes A_{f^{-1}}) \subseteq (\g \otimes A)^\Gamma$, so these action morphisms are well-defined. To see that \eqref{eq:equivariantAMapCompatibility} holds, let $f, g \in \hat{\Gamma}, a \in A_{f^{-1}}, b \in A_{g^{-1}}$, $x \in \g_{f}, y \in \g_{g}$, and $\green{v} \in \green{V}$. Then:
	\begin{multline*}
	\begin{tikzpicture}[anchorbase]
	\draw[bmod] (0.25, 0) -- (0.5, 0.25);
	\draw[bmod] (0, 0) -- (0, 0.25) -- (0.25, 0.5);
	\draw[gmod] (0.75, 0) -- (0.5, 0.25) -- (0.25, 0.5) -- (0.25, 0.75);
	\btoken{0.5, 0.25}{west}{b};
	\btoken{0.25, 0.5}{west}{a};
	\node[anchor=north] at (0, 0) {$\scriptstyle f$};
	\node[anchor=north] at (0.25, 0) {$\scriptstyle g$};
	%Phantom top label to get correct vertical positioning.
	\node[anchor=south] at (0.25, 0.75) {$\vphantom{\scriptstyle f}$};
	\end{tikzpicture}(x \otimes y \otimes \green{v}) 
	= 
	(x \otimes a) \cdot ((y \otimes b) \cdot \green{v})
	= 
	[x \otimes a, y \otimes b] \cdot \green{v} + (y \otimes b) \cdot ((x \otimes a) \cdot \green{v})\\
	=
	[x, y] \otimes ab \cdot \green{v} + (y \otimes b) \cdot ((x \otimes a) \cdot \green{v})
	= 
	\begin{tikzpicture}[anchorbase]
	\draw[bmod] (0.5, 0) -- (0.25, 0.25);
	\draw[bmod] (0, 0) -- (0.25, 0.25) -- (0.5, 0.5);
	\draw[gmod] (0.75, 0) -- (0.75, 0.25) -- (0.5, 0.5) -- (0.5, 0.75);
	\btoken[2px]{0.5, 0.5}{west}{ab};
	\node[anchor=north] at (0, 0) {$\scriptstyle f$};
	\node[anchor=north] at (0.5, 0) {$\scriptstyle g$};
	%Phantom top label to get correct vertical positioning.
	\node[anchor=south] at (0.5, 0.75) {$\vphantom{\scriptstyle f}$};
	\end{tikzpicture}(x \otimes y \otimes \green{v})
	\ +\
	\begin{tikzpicture}[anchorbase]
	\draw[bmod] (0, -0.25) -- (0.25, 0) -- (0.5, 0.25);
	\draw[bmod] (0.25, -0.25) -- (0, 0) -- (0, 0.25) -- (0.25, 0.5);
	\draw[gmod] (0.75, -0.25) -- (0.75, 0) -- (0.5, 0.25) -- (0.25, 0.5) -- (0.25, 0.75);
	\btoken{0.5, 0.25}{west}{a};
	\btoken[2px]{0.25, 0.5}{west}{b};
	\node[anchor=north] at (0, -0.25) {$\scriptstyle f$};
	\node[anchor=north] at (0.25, -0.25) {$\scriptstyle g$};
	%Phantom top label to get correct vertical positioning.
	\node[anchor=south] at (0.25, 0.75) {$\vphantom{\scriptstyle f}$};
	\end{tikzpicture}(x \otimes y \otimes \green{v}),
	\end{multline*}
as desired.

\end{eg}

\begin{rem} \label{rem:generalizationsExplained}
	It is straightforward to see that ungraded Lie algebras are equivalent to $\{1\}$-graded Lie algebras, and that $(L, A)$-modules are equivalent to equivariant $(\{1\}, \tilde{L}, A)$-modules, where $\tilde{L}$ denotes the $\{1\}$-graded Lie algebra corresponding to an ungraded Lie algebra $L$.
	\details{Given an ungraded Lie algebra $L$, the associated $\{1\}$-graded Lie algebra consists of the singleton object set $\{L_1\} = \{L\}$ and the singleton graded Lie bracket set $\left\{\begin{tikzpicture}[anchorbase]
		\draw[bmod] (0, 0) -- (0.25, 0.25) -- (0.25, 0.5);
		\draw[bmod] (0.5, 0) -- (0.25, 0.25);
		\node[anchor=north] at (0, 0) {$\scriptstyle 1$};
		\node[anchor=north] at (0.5, 0) {$\scriptstyle 1$};
		\node[anchor=south] at (0.25, 0.5) {$\scriptstyle 1$};
		\end{tikzpicture}\right\} = \left\{\dmult \right\}$. If $\green{V}$ is an $(L, A)$-module, the associated equivariant $(\{1\}, \tilde{L}, A)$-module is $\green{V}$ equipped with the action morphisms $\deamapmult{a}{1} = \damapmult{a}$.}
Due to these equivalences and what was outlined in Example \ref{ex:kMapLModule}, any result or construction for equivariant $(\Gamma, L, A)$-modules can be easily specialized to the cases of $(L, A)$-modules and $L$-modules. For instance, the definition of dual modules in Lemma \ref{lem:dualModule} is only explicitly stated for $(\Gamma, L, A)$-modules, but it applies to $(L, A)$-modules and $L$-modules as well.
\end{rem}

\begin{defin} \label{def:idealInvariantSubgroup}
	Let $\mathfrak{m} \subseteq A$ be an ideal. We write $\Gamma_\mathfrak{m} = \{g \in \Gamma \mid g \cdot \mathfrak{m} = \mathfrak{m} \}$ for the subgroup of elements that leave $\mathfrak{m}$ invariant.
\end{defin}

\begin{lem} \label{lem:twistedCharacterEvaluation}
	Let $\mathfrak{m} \subseteq A$ be a maximal ideal such that $A/\mathfrak{m} \isom \kk$. Suppose that $f \in \hat{\Gamma}$ is a character that is nontrivial when restricted to $\Gamma_\mathfrak{m}$. Then the canonical projection $A_f \to A/\mathfrak{m}$ is the zero map.
\end{lem}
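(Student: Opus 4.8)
The plan is to phrase everything in terms of the evaluation homomorphism $\ev_{\mathfrak{m}} \colon A \to A/\mathfrak{m} \isom \kk$, the composite of the canonical projection with the given isomorphism, and to exploit the fact that the subgroup $\Gamma_{\mathfrak{m}}$ acts trivially on the one-dimensional quotient $A/\mathfrak{m}$.

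First I would establish that trivial action. For $g \in \Gamma_{\mathfrak{m}}$ we have $g \cdot \mathfrak{m} = \mathfrak{m}$ by definition of $\Gamma_{\mathfrak{m}}$, so the $\kk$-algebra automorphism $g \colon A \to A$ descends to a $\kk$-algebra automorphism of $A/\mathfrak{m}$. Transporting along $A/\mathfrak{m} \isom \kk$ yields a $\kk$-algebra automorphism of $\kk$, which is necessarily the identity, being $\kk$-linear and fixing $1$. Hence $\ev_{\mathfrak{m}}(g \cdot a) = \ev_{\mathfrak{m}}(a)$ for all $a \in A$ and all $g \in \Gamma_{\mathfrak{m}}$.

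Next I would fix an arbitrary $a \in A_f$. By definition of the $f$-isotypic component, $g \cdot a = f(g)\, a$ for every $g \in \Gamma$, and in particular for every $g \in \Gamma_{\mathfrak{m}}$. Combining this with the previous step gives $\ev_{\mathfrak{m}}(a) = \ev_{\mathfrak{m}}(g \cdot a) = f(g)\, \ev_{\mathfrak{m}}(a)$, hence $(f(g) - 1)\, \ev_{\mathfrak{m}}(a) = 0$ in $\kk$ for all $g \in \Gamma_{\mathfrak{m}}$. Since $f$ restricts to a nontrivial character of $\Gamma_{\mathfrak{m}}$, I can pick $g \in \Gamma_{\mathfrak{m}}$ with $f(g) \neq 1$; because $\kk$ is a field this forces $\ev_{\mathfrak{m}}(a) = 0$. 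As $a \in A_f$ was arbitrary, the canonical projection $A_f \to A/\mathfrak{m}$ vanishes.

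I do not anticipate a real obstacle: the whole argument is a couple of lines once the evaluation map is in place. The only step deserving any attention is the first one, namely that $\Gamma_{\mathfrak{m}}$ acts trivially on $A/\mathfrak{m}$, and it reduces at once to the triviality of the group of $\kk$-algebra automorphisms of $\kk$.
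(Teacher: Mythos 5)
Your proof is correct and takes essentially the same approach as the paper: both arguments show $\Gamma_\mathfrak{m}$ acts trivially on $A/\mathfrak{m} \isom \kk$ (the paper via ``acts as a scalar fixing $1 + \mathfrak{m}$,'' you via ``$\kk$-algebra automorphisms of $\kk$ are trivial''), then combine this with the isotypic relation $g \cdot a = f(g)a$ and the existence of $g \in \Gamma_\mathfrak{m}$ with $f(g) \neq 1$ to conclude.
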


\begin{proof}
	By assumption on $f$, there exists some $g \in \Gamma_\mathfrak{m}$ such that $f(g) \neq 1$. Note that the action of $\Gamma$ on $A$ induces an action of $\Gamma_\mathfrak{m}$ on $A/\mathfrak{m}$ (namely, given by $g \cdot (a + \mathfrak{m}) = (g \cdot a) + \mathfrak{m}$). Since $A/\mathfrak{m} \isom \kk$ is one-dimensional, $g$ acts as a scalar on this quotient. Since $\Gamma$ acts on $A$ via automorphisms, $g \cdot 1 = 1$. We have $1 \notin \mathfrak{m}$ since $\mathfrak{m}$ is maximal, and hence $g$ fixes the nonzero element $1 + \mathfrak{m}$ of $A/\mathfrak{m}$. Hence $g$ acts as the identity on $A/\mathfrak{m}$. For all $a \in A_f$ we thus have:
	\begin{equation*}
	a + \mathfrak{m} = g \cdot (a + \mathfrak{m}) = (g \cdot a) + \mathfrak{m} = f(g)a + \mathfrak{m},
	\end{equation*}
	where we use the fact that $a \in A_f$ in the last step. Thus $(1 - f(g))a \in \mathfrak{m}$. Since $f(g) \neq 1$, we find that $a \in \mathfrak{m}$. Hence $A_f \subseteq \mathfrak{m}$, and the projection $A_f \to A/\mathfrak{m}$ is zero.
\end{proof}

\begin{prop} \label{prop:equivariantEvaluationModule}
	Let $\mathfrak{m} \subseteq A$ be a maximal ideal such that $A/\mathfrak{m} \isom \kk$, and write $\ev_\mathfrak{m} \colon A \to \kk$ for the associated evaluation map. Let $(\hat{\Gamma})^\mathfrak{m} \subseteq \hat{\Gamma}$ denote the subgroup consisting of characters that are trivial on $\Gamma_\mathfrak{m}$. Let $L_\mathfrak{m}$ denote the $(\hat{\Gamma})^\mathfrak{m}$-graded Lie subalgebra of $L$. Explicitly, the underlying set of objects for $L_\mathfrak{m}$ is the subset $\{L_g : g \in (\hat{\Gamma})^{\mathfrak{m}}\}$ of the object set for $L$, and its set of graded Lie brackets is $\left\{\begin{tikzpicture}[anchorbase]
	\draw[bmod] (0, 0) -- (0.25, 0.25) -- (0.25, 0.5);
	\draw[bmod] (0.5, 0) -- (0.25, 0.25);
	\node[anchor=north] at (0, 0) {$\scriptstyle g$};
	\node[anchor=north] at (0.5, 0) {$\scriptstyle h$};
	\node[anchor=south] at (0.25, 0.5) {$\scriptstyle g + h$};
	\end{tikzpicture} : g, h \in (\hat{\Gamma})^\mathfrak{m} \right\}$, i.e.\ the subset of brackets for $L$ that are labelled by elements of $(\hat{\Gamma})^\mathfrak{m}$. Suppose that $\green{V}$ is a module for $L_\mathfrak{m}$. For each $f \in (\hat{\Gamma})^\mathfrak{m}$ and $a \in A_{f^{-1}}$, define $\deamapmult{a}{f} = \ev_\mathfrak{m}(a)\begin{tikzpicture}[anchorbase]
	\draw[bmod] (0, 0) -- (0.25, 0.25);
	\draw[gmod] (0.5, 0) -- (0.25, 0.25) -- (0.25, 0.5);
	\node[anchor=north] at (0, 0) {$\scriptstyle f$};
	%Phantom top label to get correct vertical positioning.
	\node[anchor=south] at (0.25, 0.5) {$\vphantom{\scriptstyle f}$};
	\end{tikzpicture}$, and for each $f \in \hat{\Gamma} \setminus (\hat{\Gamma})^{\mathfrak{m}}$ and $a \in A_{f^{-1}}$, define $\deamapmult{a}{f} = 0$. With these action morphisms, $\green{V}$ is an equivariant $(\Gamma, L, A)$-module. Such a module is called an \emph{equivariant evaluation module (at $\mathfrak{m}$)}.
\end{prop}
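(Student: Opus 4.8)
The plan is to verify the two conditions in Definition~\ref{def:equivariantModule} for the proposed action morphisms, organizing the argument around whether the characters that appear lie in the subgroup $(\hat{\Gamma})^{\mathfrak{m}}$. The linearity relation \eqref{eq:equivariantAMapLinearity} is immediate: for $f \in (\hat{\Gamma})^{\mathfrak{m}}$ it holds because $\ev_\mathfrak{m}$ is $\kk$-linear and the underlying $L_\mathfrak{m}$-module action morphism labelled $f$ is fixed, and for $f \notin (\hat{\Gamma})^{\mathfrak{m}}$ both sides are zero by definition. So the real task is the compatibility relation \eqref{eq:equivariantAMapCompatibility}, which I would prove by cases on $f, g \in \hat{\Gamma}$ (with $a \in A_{f^{-1}}$, $b \in A_{g^{-1}}$).

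In the case $f, g \in (\hat{\Gamma})^{\mathfrak{m}}$, also $fg \in (\hat{\Gamma})^{\mathfrak{m}}$, so all three action morphisms occurring in \eqref{eq:equivariantAMapCompatibility} are scalar multiples of the graded $L_\mathfrak{m}$-module action morphisms. Pulling the scalars $\ev_\mathfrak{m}(a)$, $\ev_\mathfrak{m}(b)$, and $\ev_\mathfrak{m}(ab)$ out of the diagrams and invoking that $\ev_\mathfrak{m}$ is an algebra homomorphism, so $\ev_\mathfrak{m}(ab) = \ev_\mathfrak{m}(a)\ev_\mathfrak{m}(b)$, the claimed identity collapses to \eqref{eq:gradedLMod} applied to $\green{V}$ as an $L_\mathfrak{m}$-module. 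This step is the graded counterpart of the computation carried out for evaluation modules in Example~\ref{ex:evaluationModule}, and I would present it in the same style.

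In the remaining case $f \notin (\hat{\Gamma})^{\mathfrak{m}}$ or $g \notin (\hat{\Gamma})^{\mathfrak{m}}$, I would show that every term of \eqref{eq:equivariantAMapCompatibility} vanishes, so that the identity reads $0 = 0$. The left-hand side and the second term on the right are both built by composing $\deamapmult{a}{f}$ with $\deamapmult{b}{g}$, and at least one of these two morphisms is zero by definition, so those expressions are zero. For the remaining term, $\deamapmult{ab}{fg}$ postcomposed with the graded bracket, note $ab \in A_{f^{-1}}A_{g^{-1}} \subseteq A_{(fg)^{-1}}$; if, say, $f \notin (\hat{\Gamma})^{\mathfrak{m}}$, then $f^{-1}$ is likewise nontrivial on $\Gamma_\mathfrak{m}$, so Lemma~\ref{lem:twistedCharacterEvaluation} gives $A_{f^{-1}} \subseteq \mathfrak{m}$, hence $ab \in \mathfrak{m}$ since $\mathfrak{m}$ is an ideal, hence $\ev_\mathfrak{m}(ab) = 0$; thus $\deamapmult{ab}{fg} = 0$ whether or not $fg \in (\hat{\Gamma})^{\mathfrak{m}}$, and symmetrically if $g \notin (\hat{\Gamma})^{\mathfrak{m}}$.

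I do not anticipate a genuine obstacle: the essential input is Lemma~\ref{lem:twistedCharacterEvaluation}, and everything else is bookkeeping. The two points I would be careful about are (i) well-definedness of the action morphisms --- for $f \in (\hat{\Gamma})^{\mathfrak{m}}$ the needed $L_\mathfrak{m}$-module action morphism exists exactly because $(\hat{\Gamma})^{\mathfrak{m}}$ is the grading group of $L_\mathfrak{m}$ and $\green{V}$ is an $L_\mathfrak{m}$-module --- and (ii) the repeated use of the fact that $(\hat{\Gamma})^{\mathfrak{m}}$ is a subgroup of $\hat{\Gamma}$, which is what makes the case division clean (for instance it forces $g \notin (\hat{\Gamma})^{\mathfrak{m}}$ whenever $f \notin (\hat{\Gamma})^{\mathfrak{m}}$ but $fg \in (\hat{\Gamma})^{\mathfrak{m}}$).
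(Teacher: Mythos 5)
Your proposal is correct and follows essentially the same route as the paper's proof: the main case $f,g \in (\hat{\Gamma})^{\mathfrak{m}}$ reduces to \eqref{eq:gradedLMod} via multiplicativity of $\ev_\mathfrak{m}$, and the degenerate cases rest on Lemma~\ref{lem:twistedCharacterEvaluation}. Your handling of the degenerate cases is slightly more streamlined --- you merge the paper's two sub-cases by observing that one of $a,b$ lies in $\mathfrak{m}$ and then using the ideal property to get $ab \in \mathfrak{m}$, whereas the paper splits on whether $fg \in (\hat{\Gamma})^{\mathfrak{m}}$ and uses $\ev_\mathfrak{m}(a)\ev_\mathfrak{m}(b)=\ev_\mathfrak{m}(ab)$ --- but these are trivially equivalent observations.
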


\begin{proof}
	To see that \eqref{eq:equivariantAMapCompatibility} holds, let $f, g \in \hat{\Gamma}$ and $a \in A_{f^{-1}}, b \in A_{g^{-1}}$. If $f, g \in (\hat{\Gamma})^\mathfrak{m}$, note that $fg \in (\hat{\Gamma})^\mathfrak{m}$ as well, and:
	\begin{multline*}
	\begin{tikzpicture}[anchorbase]
	\draw[bmod] (0.25, 0) -- (0.5, 0.25);
	\draw[bmod] (0, 0) -- (0, 0.25) -- (0.25, 0.5);
	\draw[gmod] (0.75, 0) -- (0.5, 0.25) -- (0.25, 0.5) -- (0.25, 0.75);
	\btoken{0.5, 0.25}{west}{b};
	\btoken{0.25, 0.5}{west}{a};
	\node[anchor=north] at (0, 0) {$\scriptstyle f$};
	\node[anchor=north] at (0.25, 0) {$\scriptstyle g$};
	%Phantom top label to get correct vertical positioning.
	\node[anchor=south] at (0.25, 0.75) {$\vphantom{\scriptstyle f}$};
	\end{tikzpicture} 
	= 
	\ev_\mathfrak{m}(a)\ev_\mathfrak{m}(b)\begin{tikzpicture}[anchorbase]
	\draw[bmod] (0.25, 0) -- (0.5, 0.25);
	\draw[bmod] (0, 0) -- (0, 0.25) -- (0.25, 0.5);
	\draw[gmod] (0.75, 0) -- (0.5, 0.25) -- (0.25, 0.5) -- (0.25, 0.75);
	\node[anchor=north] at (0, 0) {$\scriptstyle f$};
	\node[anchor=north] at (0.25, 0) {$\scriptstyle g$};
	%Phantom top label to get correct vertical positioning.
	\node[anchor=south] at (0.25, 0.75) {$\vphantom{\scriptstyle f}$};
	\end{tikzpicture}
	\overset{\eqref{eq:gradedLMod}}{=} 
	\ev_\mathfrak{m}(ab)\begin{tikzpicture}[anchorbase]
	\draw[bmod] (0.5, 0) -- (0.25, 0.25);
	\draw[bmod] (0, 0) -- (0.25, 0.25) -- (0.5, 0.5);
	\draw[gmod] (0.75, 0) -- (0.75, 0.25) -- (0.5, 0.5) -- (0.5, 0.75);
	\node[anchor=north] at (0, 0) {$\scriptstyle f$};
	\node[anchor=north] at (0.5, 0) {$\scriptstyle g$};
	%Phantom top label to get correct vertical positioning.
	\node[anchor=south] at (0.5, 0.75) {$\vphantom{\scriptstyle f}$};
	\end{tikzpicture}
	\ +\
	\ev_\mathfrak{m}(a)\ev_\mathfrak{m}(b) \begin{tikzpicture}[anchorbase]
	\draw[bmod] (0, -0.25) -- (0.25, 0) -- (0.5, 0.25);
	\draw[bmod] (0.25, -0.25) -- (0, 0) -- (0, 0.25) -- (0.25, 0.5);
	\draw[gmod] (0.75, -0.25) -- (0.75, 0) -- (0.5, 0.25) -- (0.25, 0.5) -- (0.25, 0.75);
	\node[anchor=north] at (0, -0.25) {$\scriptstyle f$};
	\node[anchor=north] at (0.25, -0.25) {$\scriptstyle g$};
	%Phantom top label to get correct vertical positioning.
	\node[anchor=south] at (0.25, 0.75) {$\vphantom{\scriptstyle f}$};
	\end{tikzpicture}
	= 
	\begin{tikzpicture}[anchorbase]
	\draw[bmod] (0.5, 0) -- (0.25, 0.25);
	\draw[bmod] (0, 0) -- (0.25, 0.25) -- (0.5, 0.5);
	\draw[gmod] (0.75, 0) -- (0.75, 0.25) -- (0.5, 0.5) -- (0.5, 0.75);
	\btoken[2px]{0.5, 0.5}{west}{ab};
	\node[anchor=north] at (0, 0) {$\scriptstyle f$};
	\node[anchor=north] at (0.5, 0) {$\scriptstyle g$};
	%Phantom top label to get correct vertical positioning.
	\node[anchor=south] at (0.5, 0.75) {$\vphantom{\scriptstyle f}$};
	\end{tikzpicture}
	\ +\
	\begin{tikzpicture}[anchorbase]
	\draw[bmod] (0, -0.25) -- (0.25, 0) -- (0.5, 0.25);
	\draw[bmod] (0.25, -0.25) -- (0, 0) -- (0, 0.25) -- (0.25, 0.5);
	\draw[gmod] (0.75, -0.25) -- (0.75, 0) -- (0.5, 0.25) -- (0.25, 0.5) -- (0.25, 0.75);
	\btoken{0.5, 0.25}{west}{a};
	\btoken[2px]{0.25, 0.5}{west}{b};
	\node[anchor=north] at (0, -0.25) {$\scriptstyle f$};
	\node[anchor=north] at (0.25, -0.25) {$\scriptstyle g$};
	%Phantom top label to get correct vertical positioning.
	\node[anchor=south] at (0.25, 0.75) {$\vphantom{\scriptstyle f}$};
	\end{tikzpicture}\ .
	\end{multline*}
	If $f \in (\hat{\Gamma})^\mathfrak{m}$ but $g \notin (\hat{\Gamma})^\mathfrak{m}$, then $fg \notin (\hat{\Gamma})^\mathfrak{m}$, so $\deamapmult{b}{g}$ and $\deamapmult{ab}{fg}$ are both zero, and \eqref{eq:equivariantAMapCompatibility} holds trivially. The case $f \notin (\hat{\Gamma})^\mathfrak{m}, g \in (\hat{\Gamma})^\mathfrak{m}$ is analogous. Finally, suppose that $f, g \notin (\hat{\Gamma})^\mathfrak{m}$. Thus $\deamapmult{a}{f}$ and $\deamapmult{b}{g}$ are zero, and it suffices to show that $\deamapmult{ab}{fg} = 0$. If $fg \notin (\hat{\Gamma})^\mathfrak{m}$, this holds by definition. If $fg \in (\hat{\Gamma})^\mathfrak{m}$, then $\deamapmult{ab}{fg} = \ev_\mathfrak{m}(ab)\begin{tikzpicture}[anchorbase]
	\draw[bmod] (0, 0) -- (0.25, 0.25);
	\draw[gmod] (0.5, 0) -- (0.25, 0.25) -- (0.25, 0.5);
	\node[anchor=north] at (0, 0) {$\scriptstyle fg$};
	%Phantom top label to get correct vertical positioning.
	\node[anchor=south] at (0.25, 0.5) {$\vphantom{\scriptstyle f}$};
	\end{tikzpicture}$. Using Lemma \ref{lem:twistedCharacterEvaluation}, we have $\ev_\mathfrak{m}(a) = \ev_\mathfrak{m}(b) = 0$, and hence $\ev_\mathfrak{m}(ab) = \ev_\mathfrak{m}(a)\ev_\mathfrak{m}(b) = 0$, as desired.
\end{proof}

Equivariant evaluation modules -- also called twisted evaluation modules -- have previously been studied in the category $\CC = \kkvec$; see e.g.\ \cite{equivariantMapAlgebras}\footnote{Note that the term ``evaluation module'' in \cite{equivariantMapAlgebras} refers to what we would call a tensor product of evaluation modules in the current paper -- see \cite[Rem.~4.8]{equivariantMapAlgebras}.}. In this context, equivariant evaluation modules were defined in the following way: consider an equivariant map algebra $(\g \otimes A)^\Gamma$, as in Example \ref{ex:equivariantMapAlgebra}. Let $\mathfrak{m}$ be a maximal ideal of $A$, and let $\g_\mathfrak{m} = \{x \in \g \mid g \cdot x = x \text{ for all } g \in \Gamma_\mathfrak{m} \}$ denote the Lie subalgebra of $\g$ fixed by $\Gamma_{\mathfrak{m}}$. A direct calculation shows that the image of $$\xymatrixcolsep{0.7in}\xymatrix{(\g \otimes A)^\Gamma \ar[r]^-{\id_\g \otimes \ev_\mathfrak{m}} & \g \otimes \kk \ar[r]^-{\isom} & \g }$$ lies in $\g_\mathfrak{m}$. We write $\widehat{\ev_\mathfrak{m}} \colon (\g \otimes A)^\Gamma \to \g_\mathfrak{m}$ for this composite map. Suppose $\rho \colon \g_\mathfrak{m} \to \End(V)$ is a representation of $\g_\mathfrak{m}$, where $V$ is some vector space. The associated equivariant evaluation module is then given by the composition $\xymatrix{(\g \otimes A)^\Gamma \ar[r]^-{\widehat{\ev_\mathfrak{m}}} & \g_\mathfrak{m} \ar[r]^-{\rho} & \End(V) }.$
	
To see that this definition is equivalent to the one in Proposition \ref{prop:equivariantEvaluationModule}, note that an element $x \in \g$ is fixed by all elements of $\Gamma_\mathfrak{m}$ if and only if it is a linear combination of elements $x_\chi \in \g_{\chi}$, where $\chi$ ranges over characters that are trivial on $\Gamma_\mathfrak{m}$, i.e.\ $\chi \in (\hat{\Gamma})^\mathfrak{m}$. Hence $\g_\mathfrak{m} = \bigoplus\limits_{\chi \in (\hat{\Gamma})^\mathfrak{m}} \g_\chi$, which corresponds to the $(\hat{\Gamma})^\mathfrak{m}$-graded Lie subalgebra $L_\mathfrak{m}$ used in Proposition \ref{prop:equivariantEvaluationModule}. In light of Lemma \ref{lem:twistedCharacterEvaluation}, for any $\chi \in \hat{\Gamma} \setminus (\hat{\Gamma})^\mathfrak{m}$, $x \in \g_\chi$, and $a \in A_{\chi^{-1}}$, we have $\ev_\mathfrak{m}(a) = 0$, and hence $x \otimes a \in (\g \otimes A)^\Gamma$ acts on $V$ as 0. If $\chi \in (\hat{\Gamma})^\mathfrak{m}$, then $x \otimes a$ acts as $\ev_\mathfrak{m}(a)x$ (via the assumed action of $\g_\mathfrak{m}$ on $V$). This matches the definition of the action morphisms in Proposition \ref{prop:equivariantEvaluationModule}.

\subsection{Further properties, examples, and constructions}

\begin{prop} \label{prop:symmetricMonoidalModules}
	The category $(\Gamma, L, A)\dashmod$ has a symmetric monoidal structure given as follows. If $\green{V}$, $\red{W}$ are two equivariant $(\Gamma, L, A)$-modules, then their tensor product in $(\Gamma, L, A)\dashmod$ is $\green{V} \otimes_\CC \red{W}$, with action morphisms given by $\begin{tikzpicture}[anchorbase]
	\draw[bmod] (0, 0) -- (0.25, 0.25);
	\draw[gmod] (0.25, 0) -- (0.25, 0.25) -- (0, 0.5);
	\draw[rmod] (0.5, 0) -- (0.25, 0.25) -- (0.5, 0.5);
	\btoken{0.25, 0.25}{west}{a};
	\node[anchor=north] at (0, 0) {$\scriptstyle f$};
	%Phantom top label to get correct vertical positioning.
	\node[anchor=south] at (0.5, 0.5) {$\vphantom{\scriptstyle f}$};
	\end{tikzpicture} := \begin{tikzpicture}[anchorbase]
	\draw[bmod] (0, 0) -- (0.25, 0.25);
	\draw[gmod] (0.5, 0) -- (0.25, 0.25) -- (0.25, 0.5);
	\draw[rmod] (0.75, 0) -- (0.75, 0.5);
	\btoken{0.25, 0.25}{east}{a};
	\node[anchor=north] at (0, 0) {$\scriptstyle f$};
	%Phantom top label to get correct vertical positioning.
	\node[anchor=south] at (0.75, 0.5) {$\vphantom{\scriptstyle f}$};
	\end{tikzpicture}
	\ +\
	\begin{tikzpicture}[anchorbase]
	\draw[bmod] (0, 0) -- (0.5, 0.5);
	\draw[gmod] (0.5, 0) -- (0, 0.5) -- (0, 0.75);
	\draw[rmod] (1, 0) -- (0.5, 0.5) -- (0.5, 0.75);
	\btoken{0.5, 0.5}{west}{a};
	\node[anchor=north] at (0, 0) {$\scriptstyle f$};
	%Phantom top label to get correct vertical positioning.
	\node[anchor=south] at (0.5, 0.75) {$\vphantom{\scriptstyle f}$};
	\end{tikzpicture}$ for each $f \in \hat{\Gamma}$ and $a \in A_{f^{-1}}$. The tensor product on morphisms and the symmetric braiding morphisms are inherited directly from $\CC$. The monoidal unit for $(\Gamma, L, A)\dashmod$ is the monoidal unit $\one$ from $\CC$, equipped with the trivial module structure.
\end{prop}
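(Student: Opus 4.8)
The plan is to check, one piece at a time, the data and axioms of a symmetric monoidal category on $(\Gamma, L, A)\dashmod$, using throughout the observation that the associator, the unitors, the braiding, and the tensor product of morphisms are all simply inherited from $\CC$. Concretely, the forgetful functor $U \colon (\Gamma, L, A)\dashmod \to \CC$ is faithful and, by construction, carries all of this structure to the corresponding structure of $\CC$ on the nose; so every coherence diagram for $(\Gamma, L, A)\dashmod$ (pentagon, triangle, hexagon, and the symmetry axiom) commutes as soon as its image under $U$ commutes in $\CC$, which it does. Hence the genuine content reduces to three points: (1) the morphisms specified in the statement really make $V \otimes_\CC W$ into an equivariant $(\Gamma, L, A)$-module; (2) if $\phi$ and $\psi$ are morphisms of equivariant modules, then so is $\phi \otimes_\CC \psi$; and (3) the associators, unitors, and braidings of $\CC$ are morphisms of the relevant tensor-product modules, and $\one$ equipped with the trivial (all-zero) action morphisms is a monoidal unit.

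For (1), write $\rho_V \colon L_f \otimes V \to V$ and $\rho_W \colon L_f \otimes W \to W$ for the action morphisms of $V$ and $W$ (suppressing the indices $f \in \hat\Gamma$ and $a \in A_{f^{-1}}$), so that the proposed action morphism on $V \otimes W$ is $\tau^f_a = (\rho_V \otimes \id_W) + (\id_V \otimes \rho_W)\circ(\beta_{L_f, V} \otimes \id_W)$, where $\beta$ denotes the braiding of $\CC$. The linearity identity \eqref{eq:equivariantAMapLinearity} for $\tau$ is immediate from bilinearity of $\otimes_\CC$, so the real task is \eqref{eq:equivariantAMapCompatibility} with $V$ there replaced by $V \otimes W$. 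I would substitute the two-term expression for each occurrence of $\tau$ and expand by bilinearity. The left-hand side then becomes a sum of four terms: a ``pure $V$'' term built from $\rho_V \circ (\id \otimes \rho_V)$, a ``pure $W$'' term built from $\rho_W \circ (\id \otimes \rho_W)$ (with the two $L$-strands braided past $V$), and two ``mixed'' terms each involving one copy of $\rho_V$ and one of $\rho_W$; the right-hand side expands similarly. The pure-$V$ term is absorbed using \eqref{eq:gradedLMod} for $V$: it equals the $\rho_V$-part of $\tau^{fg}_{ab} \circ ([\cdot,\cdot] \otimes \id)$ plus the pure-$V$ term coming from the second right-hand summand (here $ab \in A_{(fg)^{-1}}$ since $\Gamma$ acts on $A$ by automorphisms). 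The pure-$W$ term is handled symmetrically using \eqref{eq:gradedLMod} for $W$, after invoking the crossing relations \eqref{eq:crossingIdentities} to slide the bracket $[\cdot,\cdot]\colon L_f \otimes L_g \to L_{fg}$ past $V$ and to reorder the two crossings of $L_f, L_g$ over $V$. Finally the two mixed terms on the left are matched in a pair against the two mixed terms on the right, purely by naturality of the braiding --- sliding the $\rho_V, \rho_W$ through crossings and interchanging crossings, which is exactly the content of \eqref{eq:crossingIdentities}. This is the string-diagrammatic avatar of the classical verification that $V \otimes W$ is a module via $x \cdot (v \otimes w) = (x \cdot v) \otimes w + v \otimes (x \cdot w)$, with the step where the cross terms are reconciled now carried out using the symmetric braiding. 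I expect this to be the main obstacle: it is a multi-term diagrammatic chase, though each individual move is forced.

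For (2), I would expand both sides of \eqref{eq:equivariantAMapMorphism} for $\phi \otimes \psi$ using the two-term form of $\tau$, apply \eqref{eq:equivariantAMapMorphism} for $\phi$ and for $\psi$ separately, and use naturality of the braiding to pull $\phi \otimes \psi$ through the crossing in the second summand of $\tau$; this is routine. For (3): the action morphism on a triple tensor product, computed either as $(V \otimes W) \otimes U$ or as $V \otimes (W \otimes U)$, expands in both cases into three ``pure'' summands --- one per tensor factor, each preceded by the braidings routing $L_f$ to that factor --- and the associator of $\CC$ intertwines the two computations by naturality of the braiding, so the associator is a module morphism. On $V \otimes \one$ (resp.\ $\one \otimes V$) the first summand of $\tau$ is $\rho_V$ on $V$ and the second factors through the trivial (zero) action of $L_f$ on $\one$, hence vanishes; thus the unitor of $\CC$ intertwines the actions and is a module morphism. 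For the braiding, using the symmetry $\beta_{W,V} \circ \beta_{V,W} = \id$ together with naturality, ``act then braid'' equals ``braid then act'', so $\beta_{V,W}$ is a module morphism $V \otimes W \to W \otimes V$. Finally, $\one$ with all action morphisms equal to $0$ satisfies \eqref{eq:equivariantAMapLinearity} and \eqref{eq:equivariantAMapCompatibility} trivially (both sides of the latter vanish), as in Example \ref{ex:trivialModule}, and the unitor computation just given shows it is a two-sided monoidal unit. Since $U$ is faithful and strictly preserves the tensor product, associator, unitors, and braiding, all remaining axioms of a symmetric monoidal category for $(\Gamma, L, A)\dashmod$ descend from the corresponding axioms in $\CC$.
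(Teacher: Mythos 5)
Your proposal is correct and follows essentially the same approach as the paper: the paper's proof consists precisely of the six-term diagrammatic expansion of the right-hand side of \eqref{eq:equivariantAMapCompatibility} for $\green{V} \otimes \red{W}$, reduced via two applications of \eqref{eq:crossingIdentities} and then \eqref{eq:equivariantAMapCompatibility} for $\green{V}$ and $\red{W}$, with the remaining verifications (braidings, tensor products of morphisms, and the monoidal unit) left as "direct calculations" just as you outline. One small correction: the identity you invoke for the pure-$\green{V}$ and pure-$\red{W}$ terms should be \eqref{eq:equivariantAMapCompatibility} for $\green{V}$ and $\red{W}$ (which is the axiom those modules actually satisfy), not \eqref{eq:gradedLMod}, though the intended step is clear.
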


\begin{proof}
	The right-hand side of \eqref{eq:equivariantAMapCompatibility} for $\green{V} \otimes \red{W}$ is:
	\begin{multline*}
	\begin{tikzpicture}[anchorbase]
	\draw[bmod] (0, 0) -- (0.25, 0.25) -- (0.5, 0.5);
	\draw[bmod] (0.5, 0) -- (0.25, 0.25);
	\draw[gmod] (0.75, 0) -- (0.75, 0.25) -- (0.5, 0.5) -- (0.5, 0.75);
	\draw[rmod] (1, 0) -- (1, 0.75);
	\node[anchor=north] at (0, 0) {$\scriptstyle f$};
	\node[anchor=north] at (0.5, 0) {$\scriptstyle g$};
	%Phantom top label to get correct vertical positioning.
	\node[anchor=south] at (1, 0.75) {$\vphantom{\scriptstyle f}$};
	\btoken{0.5, 0.5}{east}{ab};
	\end{tikzpicture}
	\ +\
	\begin{tikzpicture}[anchorbase]
	\draw[bmod] (0, 0) -- (0.25, 0.25) -- (0.75, 0.75);
	\draw[bmod] (0.5, 0) -- (0.25, 0.25);
	\draw[gmod] (0.75, 0) -- (0.25, 0.5) -- (0.25, 1);
	\draw[rmod] (1, 0) -- (1, 0.5) -- (0.75, 0.75) -- (0.75, 1);
	\btoken{0.75, 0.75}{west}{ab};
	\node[anchor=north] at (0, 0) {$\scriptstyle f$};
	\node[anchor=north] at (0.5, 0) {$\scriptstyle g$};
	%Phantom top label to get correct vertical positioning.
	\node[anchor=south] at (0.75, 1) {$\vphantom{\scriptstyle f}$};
	\end{tikzpicture}
	\ +\
	\begin{tikzpicture}[anchorbase]
	\draw[bmod] (0.25, -0.25) -- (0, 0) -- (0, 0.25) -- (0.25, 0.5);
	\draw[bmod] (0, -0.25) -- (0.25, 0) -- (0.5, 0.25);
	\draw[gmod] (0.75, -0.25) -- (0.75, 0) -- (0.5, 0.25) -- (0.25, 0.5) -- (0.25, 0.75);
	\draw[rmod] (1, -0.25) -- (1, 0.75);
	\btoken{0.25, 0.5}{west}{b};
	\btoken{0.5, 0.25}{west}{a};
	\node[anchor=north] at (0, -0.25) {$\scriptstyle f$};
	\node[anchor=north] at (0.25, -0.25) {$\scriptstyle g$};
	%Phantom top label to get correct vertical positioning.
	\node[anchor=south] at (1, 0.75) {$\vphantom{\scriptstyle f}$};
	\end{tikzpicture}
	\ +\
	\begin{tikzpicture}[anchorbase]
	\draw[bmod] (0.25, -0.25) -- (0, 0) -- (0.75, 0.75);
	\draw[bmod] (0, -0.25) -- (0.25, 0) -- (0.5, 0.25);
	\draw[gmod] (0.75, -0.25) -- (0.75, 0) -- (0.5, 0.25) -- (0.5, 1);
	\draw[rmod] (1, -0.25) -- (1, 0.5) -- (0.75, 0.75) -- (0.75, 1);
	\btoken{0.75, 0.75}{west}{b};
	\btoken{0.5, 0.25}{west}{a};
	\node[anchor=north] at (0, -0.25) {$\scriptstyle f$};
	\node[anchor=north] at (0.25, -0.25) {$\scriptstyle g$};
	%Phantom top label to get correct vertical positioning.
	\node[anchor=south] at (0.75, 1) {$\vphantom{\scriptstyle f}$};
	\end{tikzpicture}
	\ +\
	\begin{tikzpicture}[anchorbase]
	\draw[bmod] (0.25, -0.25) -- (0, 0) -- (0, 0.5) -- (0.25, 0.75);
	\draw[bmod] (0, -0.25) -- (0.25, 0) -- (0.75, 0.5);
	\draw[gmod] (0.5, -0.25) -- (0.5, 0.5) -- (0.25, 0.75) -- (0.25, 1);
	\draw[rmod] (1, -0.25) -- (1, 0.25) -- (0.75, 0.5) -- (0.75, 1);
	\btoken{0.25, 0.75}{west}{b};
	\btoken{0.75, 0.5}{west}{a};
	\node[anchor=north] at (0, -0.25) {$\scriptstyle f$};
	\node[anchor=north] at (0.25, -0.25) {$\scriptstyle g$};
	%Phantom top label to get correct vertical positioning.
	\node[anchor=south] at (0.75, 1) {$\vphantom{\scriptstyle f}$};
	\end{tikzpicture}
	\ +\
	\begin{tikzpicture}[anchorbase]
	\draw[bmod] (0.25, -0.25) -- (0, 0) -- (0, 0.25) -- (0.5, 0.75);
	\draw[bmod] (0, -0.25) -- (0.25, 0) -- (0.75, 0.5);
	\draw[gmod] (0.75, -0.25) -- (0.75, 0) -- (0, 0.75) -- (0, 1);
	\draw[rmod] (1, -0.25) -- (1, 0.25) -- (0.75, 0.5) -- (0.5, 0.75) -- (0.5, 1);
	\btoken{0.5, 0.75}{west}{b};
	\btoken{0.75, 0.5}{west}{a};
	\node[anchor=north] at (0, -0.25) {$\scriptstyle f$};
	\node[anchor=north] at (0.25, -0.25) {$\scriptstyle g$};
	%Phantom top label to get correct vertical positioning.
	\node[anchor=south] at (0.5, 1) {$\vphantom{\scriptstyle f}$};
	\end{tikzpicture}\\
	\overset{\eqref{eq:crossingIdentities}}{=} 
	\left(\begin{tikzpicture}[anchorbase]
	\draw[bmod] (0, 0) -- (0.25, 0.25) -- (0.5, 0.5);
	\draw[bmod] (0.5, 0) -- (0.25, 0.25);
	\draw[gmod] (0.75, 0) -- (0.75, 0.25) -- (0.5, 0.5) -- (0.5, 0.75);
	\draw[rmod] (1, 0) -- (1, 0.75);
	\btoken{0.5, 0.5}{east}{ab};
	\node[anchor=north] at (0, 0) {$\scriptstyle f$};
	\node[anchor=north] at (0.5, 0) {$\scriptstyle g$};
	%Phantom top label to get correct vertical positioning.
	\node[anchor=south] at (1, 0.75) {$\vphantom{\scriptstyle f}$};
	\end{tikzpicture}
	\ +\
	\begin{tikzpicture}[anchorbase]
	\draw[bmod] (0.25, -0.25) -- (0, 0) -- (0, 0.25) -- (0.25, 0.5);
	\draw[bmod] (0, -0.25) -- (0.25, 0) -- (0.5, 0.25);
	\draw[gmod] (0.75, -0.25) -- (0.75, 0) -- (0.5, 0.25) -- (0.25, 0.5) -- (0.25, 0.75);
	\draw[rmod] (1, -0.25) -- (1, 0.75);
	\btoken{0.25, 0.5}{west}{b};
	\btoken{0.5, 0.25}{west}{a};
	\node[anchor=north] at (0, -0.25) {$\scriptstyle f$};
	\node[anchor=north] at (0.25, -0.25) {$\scriptstyle g$};
	%Phantom top label to get correct vertical positioning.
	\node[anchor=south] at (1, 0.75) {$\vphantom{\scriptstyle f}$};
	\end{tikzpicture}\ 
	\right)
	\ +\
	\left(
	\begin{tikzpicture}[anchorbase]
	\draw[bmod] (0, 0) -- (0.5, 0.5) -- (0.75, 0.75) -- (1, 1);
	\draw[bmod] (0.5, 0) -- (1, 0.5) -- (0.75, 0.75);
	\draw[gmod] (0.75, 0) -- (0.25, 0.5) -- (0.25, 1.25);
	\draw[rmod] (1.25, 0) -- (1.25, 0.75) -- (1, 1) -- (1, 1.25);
	\btoken{1, 1}{west}{ab};
	\node[anchor=north] at (0, 0) {$\scriptstyle f$};
	\node[anchor=north] at (0.5, 0) {$\scriptstyle g$};
	%Phantom top label to get correct vertical positioning.
	\node[anchor=south] at (1, 1.25) {$\vphantom{\scriptstyle f}$};
	\end{tikzpicture}
	\ +\
	\begin{tikzpicture}[anchorbase]
	\draw[bmod] (0, 0) -- (0.25, 0.25) -- (0.5, 0.5) -- (0.75, 0.75);
	\draw[bmod] (0.25, 0) -- (0.5, 0.25) -- (0.25, 0.5) -- (0.25, 0.75) -- (0.5, 1);
	\draw[gmod] (0.5, 0) -- (0, 0.5) -- (0, 1.25);
	\draw[rmod] (1, 0) -- (1, 0.5) -- (0.75, 0.75) -- (0.5, 1) -- (0.5, 1.25);
	\btoken{0.5, 1}{west}{b};
	\btoken{0.75, 0.75}{west}{a};
	\node[anchor=north] at (0, 0) {$\scriptstyle f$};
	\node[anchor=north] at (0.25, 0) {$\scriptstyle g$};
	%Phantom top label to get correct vertical positioning.
	\node[anchor=south] at (0.5, 1.25) {$\vphantom{\scriptstyle f}$};
	\end{tikzpicture}
	\right)
	\ +\
	\begin{tikzpicture}[anchorbase]
	\draw[bmod] (0, 0) -- (0.25, 0.25) -- (0, 0.5) -- (0.25, 0.75);
	\draw[bmod] (0.25, 0) -- (0, 0.25) -- (0.25, 0.5) -- (0.75, 1);
	\draw[gmod] (0.5, 0) -- (0.5, 0.5) -- (0.25, 0.75) -- (0.25, 1.25);
	\draw[rmod] (1, 0) -- (1, 0.75) -- (0.75, 1) -- (0.75, 1.25);
	\btoken{0.75, 1}{east}{b};
	\btoken{0.25, 0.75}{east}{a};
	\node[anchor=north] at (0, 0) {$\scriptstyle f$};
	\node[anchor=north] at (0.25, 0) {$\scriptstyle g$};
	%Phantom top label to get correct vertical positioning.
	\node[anchor=south] at (0.75, 1.25) {$\vphantom{\scriptstyle f}$};
	\end{tikzpicture}
	\ +\
	\begin{tikzpicture}[anchorbase]
	\draw[bmod] (0, 0) -- (0.75, 0.75);
	\draw[bmod] (0.25, 0) -- (0.5, 0.25);
	\draw[gmod] (0.75, 0) -- (0.5, 0.25) -- (0.5, 1);
	\draw[rmod] (1, 0) -- (1, 0.5) -- (0.75, 0.75) -- (0.75, 1);
	\btoken{0.75, 0.75}{west}{a};
	\btoken{0.5, 0.25}{west}{b};
	\node[anchor=north] at (0, 0) {$\scriptstyle f$};
	\node[anchor=north] at (0.25, 0) {$\scriptstyle g$};
	%Phantom top label to get correct vertical positioning.
	\node[anchor=south] at (0.75, 1) {$\vphantom{\scriptstyle f}$};
	\end{tikzpicture}\\
	\overset{\eqref{eq:crossingIdentities}}{=}
	\left(\begin{tikzpicture}[anchorbase]
	\draw[bmod] (0, 0) -- (0.25, 0.25) -- (0.5, 0.5);
	\draw[bmod] (0.5, 0) -- (0.25, 0.25);
	\draw[gmod] (0.75, 0) -- (0.75, 0.25) -- (0.5, 0.5) -- (0.5, 0.75);
	\draw[rmod] (1, 0) -- (1, 0.75);
	\btoken{0.5, 0.5}{east}{ab};
	\node[anchor=north] at (0, 0) {$\scriptstyle f$};
	\node[anchor=north] at (0.5, 0) {$\scriptstyle g$};
	%Phantom top label to get correct vertical positioning.
	\node[anchor=south] at (1, 0.75) {$\vphantom{\scriptstyle f}$};
	\end{tikzpicture}
	\ +\
	\begin{tikzpicture}[anchorbase]
	\draw[bmod] (0.25, -0.25) -- (0, 0) -- (0, 0.25) -- (0.25, 0.5);
	\draw[bmod] (0, -0.25) -- (0.25, 0) -- (0.5, 0.25);
	\draw[gmod] (0.75, -0.25) -- (0.75, 0) -- (0.5, 0.25) -- (0.25, 0.5) -- (0.25, 0.75);
	\draw[rmod] (1, -0.25) -- (1, 0.75);
	\btoken{0.25, 0.5}{west}{b};
	\btoken{0.5, 0.25}{west}{a};
	\node[anchor=north] at (0, -0.25) {$\scriptstyle f$};
	\node[anchor=north] at (0.25, -0.25) {$\scriptstyle g$};
	%Phantom top label to get correct vertical positioning.
	\node[anchor=south] at (1, 0.75) {$\vphantom{\scriptstyle f}$};
	\end{tikzpicture}
	\right)
	\ +\
	\left(
	\begin{tikzpicture}[anchorbase]
	\draw[bmod] (0, 0) -- (0.5, 0.5) -- (0.75, 0.75) -- (1, 1);
	\draw[bmod] (0.5, 0) -- (1, 0.5) -- (0.75, 0.75);
	\draw[gmod] (0.75, 0) -- (0.25, 0.5) -- (0.25, 1.25);
	\draw[rmod] (1.25, 0) -- (1.25, 0.75) -- (1, 1) -- (1, 1.25);
	\btoken{1, 1}{west}{ab};
	\node[anchor=north] at (0, 0) {$\scriptstyle f$};
	\node[anchor=north] at (0.5, 0) {$\scriptstyle g$};
	%Phantom top label to get correct vertical positioning.
	\node[anchor=south] at (1, 1.25) {$\vphantom{\scriptstyle f}$};
	\end{tikzpicture}
	\ +\
	\begin{tikzpicture}[anchorbase]
	\draw[bmod] (0, 0) -- (0.25, 0.25) -- (0.5, 0.5) -- (0.75, 0.75);
	\draw[bmod] (0.25, 0) -- (0.5, 0.25) -- (0.25, 0.5) -- (0.25, 0.75) -- (0.5, 1);
	\draw[gmod] (0.5, 0) -- (0, 0.5) -- (0, 1.25);
	\draw[rmod] (1, 0) -- (1, 0.5) -- (0.75, 0.75) -- (0.5, 1) -- (0.5, 1.25);
	\btoken{0.5, 1}{west}{b};
	\btoken{0.75, 0.75}{west}{a};
	\node[anchor=north] at (0, 0) {$\scriptstyle f$};
	\node[anchor=north] at (0.25, 0) {$\scriptstyle g$};
	%Phantom top label to get correct vertical positioning.
	\node[anchor=south] at (0.5, 1.25) {$\vphantom{\scriptstyle f}$};
	\end{tikzpicture}
	\right)
	\ +\
	\begin{tikzpicture}[anchorbase]
	\draw[bmod] (0, 0) -- (0, 0.5) -- (0.25, 0.75);
	\draw[bmod] (0.25, 0) -- (0.75, 0.5);
	\draw[gmod] (0.5, 0) -- (0.5, 0.5) -- (0.25, 0.75) -- (0.25, 1);
	\draw[rmod] (1, 0) -- (1, 0.25) -- (0.75, 0.5) -- (0.75, 1);
	\btoken{0.25, 0.75}{east}{a};
	\btoken{0.75, 0.5}{west}{b};
	\node[anchor=north] at (0, 0) {$\scriptstyle f$};
	\node[anchor=north] at (0.25, 0) {$\scriptstyle g$};
	%Phantom top label to get correct vertical positioning.
	\node[anchor=south] at (0.75, 1) {$\vphantom{\scriptstyle f}$};
	\end{tikzpicture}
	\ +\
	\begin{tikzpicture}[anchorbase]
	\draw[bmod] (0, 0) -- (0.75, 0.75);
	\draw[bmod] (0.25, 0) -- (0.5, 0.25);
	\draw[gmod] (0.75, 0) -- (0.5, 0.25) -- (0.5, 1);
	\draw[rmod] (1, 0) -- (1, 0.5) -- (0.75, 0.75) -- (0.75, 1);
	\btoken{0.75, 0.75}{west}{a};
	\btoken{0.5, 0.25}{west}{b};
	\node[anchor=north] at (0, 0) {$\scriptstyle f$};
	\node[anchor=north] at (0.25, 0) {$\scriptstyle g$};
	%Phantom top label to get correct vertical positioning.
	\node[anchor=south] at (0.75, 1) {$\vphantom{\scriptstyle f}$};
	\end{tikzpicture}\\
	\overset{\substack{\eqref{eq:equivariantAMapCompatibility}\text{ for}\\ \text{$\green{V}$ and $\red{W}$}\\ \vspace{-5px}}}{=}
	\begin{tikzpicture}[anchorbase]
	\draw[bmod] (0, 0) -- (0, 0.25) -- (0.25, 0.5);
	\draw[bmod] (0.25, 0) -- (0.5, 0.25);
	\draw[gmod] (0.75, 0) -- (0.5, 0.25) -- (0.25, 0.5) -- (0.25, 0.75);
	\draw[rmod] (1, 0) -- (1, 0.75);
	\btoken{0.25, 0.5}{west}{a};
	\btoken{0.5, 0.25}{west}{b};
	\node[anchor=north] at (0, 0) {$\scriptstyle f$};
	\node[anchor=north] at (0.25, 0) {$\scriptstyle g$};
	%Phantom top label to get correct vertical positioning.
	\node[anchor=south] at (1, 0.75) {$\vphantom{\scriptstyle f}$};
	\end{tikzpicture}
	\ +\
	\begin{tikzpicture}[anchorbase]
	\draw[bmod] (0, 0) -- (0, 0.25) -- (0.5, 0.75);
	\draw[bmod] (0.25, 0) -- (0.75, 0.5);
	\draw[gmod] (0.75, 0) -- (0, 0.75) -- (0, 1);
	\draw[rmod] (1, 0) -- (1, 0.25) -- (0.75, 0.5) -- (0.5, 0.75) -- (0.5, 1);
	\btoken{0.5, 0.75}{west}{a};
	\btoken{0.75, 0.5}{west}{b};
	\node[anchor=north] at (0, 0) {$\scriptstyle f$};
	\node[anchor=north] at (0.25, 0) {$\scriptstyle g$};
	%Phantom top label to get correct vertical positioning.
	\node[anchor=south] at (0.5, 1) {$\vphantom{\scriptstyle f}$};
	\end{tikzpicture}
	\ +\
	\begin{tikzpicture}[anchorbase]
	\draw[bmod] (0, 0) -- (0, 0.5) -- (0.25, 0.75);
	\draw[bmod] (0.25, 0) -- (0.75, 0.5);
	\draw[gmod] (0.5, 0) -- (0.5, 0.5) -- (0.25, 0.75) -- (0.25, 1);
	\draw[rmod] (1, 0) -- (1, 0.25) -- (0.75, 0.5) -- (0.75, 1);
	\btoken{0.25, 0.75}{west}{a};
	\btoken{0.75, 0.5}{west}{b};
	\node[anchor=north] at (0, 0) {$\scriptstyle f$};
	\node[anchor=north] at (0.25, 0) {$\scriptstyle g$};
	%Phantom top label to get correct vertical positioning.
	\node[anchor=south] at (0.75, 1) {$\vphantom{\scriptstyle f}$};
	\end{tikzpicture}
	\ +\
	\begin{tikzpicture}[anchorbase]
	\draw[bmod] (0, 0) -- (0.75, 0.75);
	\draw[bmod] (0.25, 0) -- (0.5, 0.25);
	\draw[gmod] (0.75, 0) -- (0.5, 0.25) -- (0.5, 1);
	\draw[rmod] (1, 0) -- (1, 0.5) -- (0.75, 0.75) -- (0.75, 1);
	\btoken{0.75, 0.75}{west}{a};
	\btoken{0.5, 0.25}{west}{b};
	\node[anchor=north] at (0, 0) {$\scriptstyle f$};
	\node[anchor=north] at (0.25, 0) {$\scriptstyle g$};
	%Phantom top label to get correct vertical positioning.
	\node[anchor=south] at (0.75, 1) {$\vphantom{\scriptstyle f}$};
	\end{tikzpicture}, 
	\end{multline*}
	which is the left-hand side of \eqref{eq:equivariantAMapCompatibility} for $\green{V} \otimes \red{W}$, as desired. A direct calculation shows that the symmetric braidings from $\CC$ are morphisms of equivariant $(\Gamma, L, A)$-modules, and similarly for $\CC$-tensor products of equivariant $(\Gamma, L, A)$-module morphisms. It is similarly straightforward to confirm that the monoidal unit from $\CC$ equipped with the trivial module structure is the monoidal unit for $(\Gamma, L, A)\dashmod$.
\end{proof}

\begin{lem} \label{lem:dualModule}
	Let $\begin{tikzpicture}[anchorbase]
	\draw[gmod>] (0, 0) -- (0, 0.5);
	\end{tikzpicture}$ be an equivariant $(\Gamma, L, A)$-module whose underlying $\CC$-object has a dual $\begin{tikzpicture}[anchorbase]
	\draw[<gmod] (0, 0) -- (0, 0.5);
	\end{tikzpicture} \in \ob(\CC)$. For each $f \in \hat{\Gamma}$ and $a \in A_{f^{-1}}$, define $\begin{tikzpicture}[anchorbase]
	\draw[bmod] (0, 0) -- (0.25, 0.25);
	\draw[<gmod] (0.5, 0) -- (0.25, 0.25) -- (0.25, 0.5);
	\btoken{0.25, 0.25}{west}{a};
	\node[anchor=north] at (0, 0) {$\scriptstyle f$};
	%Phantom top label to get correct vertical positioning.
	\node[anchor=south] at (0.25, 0.5) {$\vphantom{\scriptstyle f}$};
	\end{tikzpicture} = -\ \begin{tikzpicture}[anchorbase]
	\draw[<gmod] (0.25, -0.5) to[out=90, in=270]  (0, 0.25) -- (0, 0.5) arc(180:0:0.25);
	\draw[bmod] (0, -0.5) to[out=90, in=225]  (0.25, 0.25) -- (0.5, 0.5);
	\draw[gmod] (0.5, 0.5) -- (0.75, 0.25) -- (0.75, 0) arc(180:360:0.125) -- (1, 1);
	\btoken{0.5, 0.5}{west}{a};
	\node[anchor=north] at (0, -0.5) {$\scriptstyle f$};
	%Phantom top label to get correct vertical positioning.
	\node[anchor=south] at (0.5, 0.5) {$\vphantom{\scriptstyle f}$};
	\end{tikzpicture}\ .$ Equipped with these action morphisms, $\begin{tikzpicture}[anchorbase]
	\draw[<gmod] (0, 0) -- (0, 0.5);
	\end{tikzpicture}$ is the dual module to $\begin{tikzpicture}[anchorbase]
	\draw[gmod>] (0, 0) -- (0, 0.5);
	\end{tikzpicture}$ (in $(\Gamma, L, A)\dashmod$).
 \end{lem}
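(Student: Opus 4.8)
The plan is to verify, in order: that the proposed action morphisms on $V^*$ are $\kk$-linear in $a$, i.e.\ relation \eqref{eq:equivariantAMapLinearity}; that they satisfy the compatibility relation \eqref{eq:equivariantAMapCompatibility}; and finally that the evaluation and coevaluation morphisms of the dual pair $(V, V^*)$ formed in $\CC$ are morphisms of equivariant $(\Gamma, L, A)$-modules once $V^*$ is equipped with this structure. The first point is immediate, since composition, tensoring, and the cup/cap are all $\kk$-linear and \eqref{eq:equivariantAMapLinearity} holds for $V$. The first two points together say that $V^*$ is an object of $(\Gamma, L, A)\dashmod$; the third then exhibits it as the dual of $V$ in that symmetric monoidal category (which exists by Proposition \ref{prop:symmetricMonoidalModules}).

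For \eqref{eq:equivariantAMapCompatibility}, the key observation is that the defining formula presents the $V^*$-action as $-1$ times the mate (transpose), under the duality $(V, V^*)$, of the $V$-action. First I would record the ``transpose reverses order'' principle: expanding the left-hand side of \eqref{eq:equivariantAMapCompatibility} for $V^*$ by applying the definition twice and straightening the resulting picture with the zigzag identities \eqref{eq:zigzag} (and \eqref{eq:delooping}), it becomes $(-1)^2 = 1$ times the mate of the composite ``act via the $g$-strand with token $b$, then via the $f$-strand with token $a$'' on $V$, precomposed with the braiding that crosses the two $L$-strands. Now I would apply \eqref{eq:equivariantAMapCompatibility} for $V$ with the roles of $(f, a)$ and $(g, b)$ interchanged (legitimate since $\hat{\Gamma}$ is abelian and $A$ is commutative). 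This produces a bracket term and a crossed iterated-action term; taking mates, the latter straightens---after two mutually inverse $L$-braidings cancel---into exactly the second term on the right-hand side of \eqref{eq:equivariantAMapCompatibility} for $V^*$, while the former carries a crossing of the two $L$-inputs which \eqref{eq:gradedSkewSymmetry} replaces by $-1$ times the uncrossed bracket $L_f \otimes L_g \to L_{fg}$; this sign cancels against the sign that appears when the mate of the $V$-action is rewritten as (minus) the $V^*$-action, leaving precisely the first term on the right-hand side of \eqref{eq:equivariantAMapCompatibility} for $V^*$. I expect the only real difficulty here to be bookkeeping: tracking the $L$-strand braidings introduced by the mates and the three separate sources of signs (two built into the definition of the $V^*$-action, one from graded skew-symmetry) while straightening the zigzags.

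For the last point, I would take as candidate duality data the evaluation $V \otimes V^* \to \one$ and coevaluation $\one \to V^* \otimes V$ of the dual pair $(V, V^*)$ in $\CC$. Since $\one$ carries the trivial module structure, relation \eqref{eq:equivariantAMapMorphism} for the evaluation reduces to the claim that its composite with the $(f, a)$-action on $V \otimes V^*$ vanishes for all $f \in \hat{\Gamma}$ and all $a \in A_{f^{-1}}$. By Proposition \ref{prop:symmetricMonoidalModules} that action is the sum of the $V$-action on the first tensor factor and the braided $V^*$-action on the second; substituting the zigzag definition of the latter and collapsing with one application of \eqref{eq:zigzag} shows it equals $-1$ times the first summand postcomposed with the evaluation, so the two summands cancel. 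The argument for the coevaluation is analogous, now using that the $L$-action on $\one$ is zero, so the relevant instance of \eqref{eq:equivariantAMapMorphism} again reduces to a cancellation. Since the $\CC$-evaluation and $\CC$-coevaluation satisfy the zigzag identities \eqref{eq:zigzag}---equalities of $\CC$-morphisms which we have just shown are also $(\Gamma, L, A)$-module morphisms---these identities persist in $(\Gamma, L, A)\dashmod$, and therefore $V^*$ with its new action morphisms is the dual of $V$ there. By Remark \ref{rem:generalizationsExplained}, this simultaneously yields dual $(L, A)$-modules and dual $L$-modules as special cases.
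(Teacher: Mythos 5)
Your proposal is correct and follows essentially the same route as the paper: expand the compatibility LHS for $V^*$ via the defining formula and zigzag/delooping (the paper records this as a separate displayed identity), apply compatibility for $V$, then resolve signs and crossings via \eqref{eq:gradedSkewSymmetry} and \eqref{eq:crossingIdentities}, and finally check that the $\CC$-evaluation and $\CC$-coevaluation are module morphisms by the cancellation-against-zero argument (the unit being trivial). The only difference is presentational — you phrase the compatibility step conceptually in terms of mates and ``transpose reverses order,'' whereas the paper carries out the explicit string-diagram calculation — and your inclusion of the linearity check \eqref{eq:equivariantAMapLinearity} is consistent with the paper's blanket remark in Definition~\ref{def:aMapModule} that such checks are omitted as obvious.
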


\begin{proof}
	Let $f, g \in \hat{\Gamma}$, $a \in A_{f^{-1}}$, and $b \in A_{g^{-1}}$. Then:
	\begin{equation} \label{eq:dualModuleCalculation}
	\begin{tikzpicture}[anchorbase]
	\draw[bmod] (0.25, 0) -- (0.5, 0.25);
	\draw[bmod] (0, 0) -- (0, 0.25) -- (0.25, 0.5);
	\draw[<gmod] (0.75, 0) -- (0.5, 0.25) -- (0.25, 0.5) -- (0.25, 0.75);
	\btoken[1px]{0.5, 0.25}{west}{b};
	\btoken{0.25, 0.5}{west}{a};
	\node[anchor=north] at (0, 0) {$\scriptstyle f$};
	\node[anchor=north] at (0.25, 0) {$\scriptstyle g$};
	%Phantom top label to get correct vertical positioning.
	\node[anchor=south] at (0.25, 0.75) {$\vphantom{\scriptstyle f}$};
	\end{tikzpicture}
	=
	\begin{tikzpicture}[anchorbase]
	\draw[<gmod] (0.5, -1.25) -- (0.5, -1) to[out=90, in=270] (0.25, -0.75) -- (0.25, -0.375) arc(180:0:0.25) -- (0.75, -0.375) -- (1, -0.625) -- (1, -0.75) arc(180:360:0.125) -- (1.25, -0.75) -- (1.25, -0.5) to[out=90, in=270]  (0, 0.5) -- (0, 0.75) arc(180:0:0.25) -- (0.5, 0.5);
	\draw[bmod] (0, -1.25) -- (0, -0.5) to[out=90, in=225]  (0.25, 0.25) -- (0.5, 0.5);
	\draw[gmod] (0.5, 0.5) -- (0.75, 0.25) -- (0.75, 0.125) arc(180:360:0.125) -- (1, 1) -- (1, 1.25);
	\draw[bmod] (0.25, -1.25) -- (0.25, -1) to[out=90, in=270] (0.5, -0.75) -- (0.5, -0.625) -- (0.75, -0.375);
	\btoken{0.5, 0.5}{west}{a};
	\node[anchor=north] at (0, -1.25) {$\scriptstyle f$};
	\btoken{0.75, -0.375}{west}{b};
	\node[anchor=north] at (0.25, -1.25) {$\scriptstyle g$};
	%Phantom top label to get correct vertical positioning.
	\node[anchor=south] at (1, 1.25) {$\vphantom{\scriptstyle f}$};
	\end{tikzpicture}
	\overset{\eqref{eq:crossingIdentities}}{=}
	\begin{tikzpicture}[anchorbase]
	\draw[bmod] (0, 0) -- (0, 0.25) to[out=90, in=270] (1.25, 0.75) -- (1.25, 1.25) -- (1.5, 1.5);
	\draw[bmod] (0.25, 0) -- (0.25, 0.25) to[out=90, in=270] (0, 0.5) to[out=90, in=270] (0, 0.75) to[out=90, in=270] (0.25, 1) -- (0.25, 1.25) -- (0.5, 1.5);
	\draw[<gmod] (0.5, 0) -- (0.5, 0.5) to[out=90, in=270] (0, 1) -- (0, 1.5) arc(180:0:0.25) -- (0.75, 1.25) -- (0.75, 1) arc(180:360:0.125) -- (1, 1.5) arc(180:0:0.25) -- (1.75, 1.25) -- (1.75, 1) arc(180:360:0.125) -- (2, 2);
	\btoken{0.5, 1.5}{west}{b};
	\node[anchor=north] at (0, 0) {$\scriptstyle f$};
	\btoken{1.5, 1.5}{west}{a};
	\node[anchor=north] at (0.25, 0) {$\scriptstyle g$};
	%Phantom top label to get correct vertical positioning.
	\node[anchor=south] at (2, 2) {$\vphantom{\scriptstyle f}$};
	\end{tikzpicture}
	\overset{\eqref{eq:zigzag}}{=}
	\begin{tikzpicture}[anchorbase]
	\draw[bmod] (0, 0) -- (0, 0.25) to[out=90, in=270] (0.5, 0.75) -- (0.5, 1.25) -- (0.75, 1.5);
	\draw[bmod] (0.25, 0) -- (0.25, 0.25) to[out=90, in=270] (0, 0.5) to[out=90, in=270] (0, 0.75) to[out=90, in=270] (0.25, 1) -- (0.25, 1.75) -- (0.5, 2);
	\draw[<gmod] (0.5, 0) -- (0.5, 0.5) to[out=90, in=270] (0, 1) -- (0, 2) arc(180:0:0.25) -- (0.75, 1.75) -- (0.75, 1.5) -- (1, 1.25) -- (1, 1) arc(180:360:0.125) -- (1.25, 2.5);
	\btoken{0.5, 2}{west}{b};
	\node[anchor=north] at (0, 0) {$\scriptstyle f$};
	\btoken{0.75, 1.5}{west}{a};
	\node[anchor=north] at (0.25, 0) {$\scriptstyle g$};
	%Phantom top label to get correct vertical positioning.
	\node[anchor=south] at (1.25, 2.5) {$\vphantom{\scriptstyle f}$};
	\end{tikzpicture}\ ,
	\end{equation}
	and so
	\begin{multline*}
	\begin{tikzpicture}[anchorbase]
	\draw[bmod] (0.25, 0) -- (0.5, 0.25);
	\draw[bmod] (0, 0) -- (0, 0.25) -- (0.25, 0.5);
	\draw[<gmod] (0.75, 0) -- (0.5, 0.25) -- (0.25, 0.5) -- (0.25, 0.75);
	\btoken[1 px]{0.5, 0.25}{west}{b};
	\btoken{0.25, 0.5}{west}{a};
	\node[anchor=north] at (0, 0) {$\scriptstyle f$};
	\node[anchor=north] at (0.25, 0) {$\scriptstyle g$};
	%Phantom top label to get correct vertical positioning.
	\node[anchor=south] at (0.25, 0.75) {$\vphantom{\scriptstyle f}$};
	\end{tikzpicture}
	\underset{\eqref{eq:equivariantAMapCompatibility}}{\overset{\eqref{eq:dualModuleCalculation}}{=}}
	\begin{tikzpicture}[anchorbase]
	\draw[bmod] (0, 0) -- (0, 0.25) to[out=90, in=315] (0.75, 1) -- (0.5, 1.25) -- (0.5, 1.5) -- (0.75, 1.75);
	\draw[bmod] (0.25, 0) -- (0.25, 0.25) to[out=90, in=270] (0, 0.5) to[out=90, in=270] (0, 0.75) to[out=90, in=225] (0.25, 1) -- (0.5, 1.25);
	\draw[<gmod] (0.5, 0) -- (0.5, 0.5) to[out=90, in=270] (0, 1) -- (0, 1.75) arc(180:0:0.375) -- (1, 1.5) -- (1, 1.25) arc(180:360:0.125) -- (1.25, 2.375);
	\node[anchor=north] at (0, 0) {$\scriptstyle f$};
	\btoken{0.75, 1.75}{west}{ab};
	\node[anchor=north] at (0.25, 0) {$\scriptstyle g$};
	%Phantom top label to get correct vertical positioning.
	\node[anchor=south] at (1.25, 2.375) {$\vphantom{\scriptstyle f}$};
	\end{tikzpicture}
	+
	\begin{tikzpicture}[anchorbase]
	\draw[bmod] (0, 0) -- (0, 0.25) to[out=90, in=270] (0.5, 0.75) -- (0.5, 1) to[out=90, in=270] (0.25, 1.25) -- (0.25, 1.75) -- (0.5, 2);
	\draw[bmod] (0.25, 0) -- (0.25, 0.25) to[out=90, in=270] (0, 0.5) to[out=90, in=270] (0, 0.75) to[out=90, in=225] (0.25, 1) -- (0.75, 1.5);
	\draw[<gmod] (0.5, 0) -- (0.5, 0.5) to[out=90, in=270] (0, 1) -- (0, 2) arc(180:0:0.25) -- (0.75, 1.75) -- (0.75, 1.5) -- (1, 1.25) -- (1, 1) arc(180:360:0.125) -- (1.25, 2.5);
	\btoken{0.5, 2}{west}{a};
	\node[anchor=north] at (0, 0) {$\scriptstyle f$};
	\btoken{0.75, 1.5}{west}{b};
	\node[anchor=north] at (0.25, 0) {$\scriptstyle g$};
	%Phantom top label to get correct vertical positioning.
	\node[anchor=south] at (1.25, 2.5) {$\vphantom{\scriptstyle f}$};
	\end{tikzpicture}
	\overset{\eqref{eq:crossingIdentities}}{=}
	\begin{tikzpicture}[anchorbase]
	\draw[bmod] (0, 0) -- (0, 0.25) to[out=90, in=315] (0.5, 0.75) -- (0.25, 1);
	\draw[bmod] (0.5, 0) -- (0.5, 0.25) to[out=90, in=225] (0, 0.75) -- (0.25, 1) -- (0.25, 1.25) -- (0.25, 1.5) -- (0.5, 1.75);
	\draw[<gmod] (0.75, 0) -- (0.75, 0.75) to[out=90, in=270] (0, 1.75) arc(180:0:0.25) -- (0.75, 1.5) -- (0.75, 1.25)  arc(180:360:0.125) -- (1, 2.25);
	\node[anchor=north] at (0, 0) {$\scriptstyle f$};
	\btoken{0.5, 1.75}{west}{ab};
	\node[anchor=north] at (0.5, 0) {$\scriptstyle g$};
	%Phantom top label to get correct vertical positioning.
	\node[anchor=south] at (1, 2.25) {$\vphantom{\scriptstyle f}$};
	\end{tikzpicture}
	+
	\begin{tikzpicture}[anchorbase]
	\draw[bmod] (0.25, -0.25) -- (0.25, 0) to[out=90, in=270] (0, 0.25) to[out=90, in=270] (0.5, 0.75) -- (0.5, 1.25) -- (0.75, 1.5);
	\draw[bmod] (0, -0.25) -- (0, 0) to[out=90, in=270] (0.25, 0.25) to[out=90, in=270] (0, 0.5) to[out=90, in=270] (0, 0.75) to[out=90, in=270] (0.25, 1) -- (0.25, 1.75) -- (0.5, 2);
	\draw[<gmod] (0.5, -0.25) -- (0.5, 0.5) to[out=90, in=270] (0, 1) -- (0, 2) arc(180:0:0.25) -- (0.75, 1.75) -- (0.75, 1.5) -- (1, 1.25) -- (1, 1) arc(180:360:0.125) -- (1.25, 2.5);
	\btoken{0.5, 2}{west}{a};
	\node[anchor=north] at (0, -0.25) {$\scriptstyle f$};
	\btoken{0.75, 1.5}{west}{b};
	\node[anchor=north] at (0.25, -0.25) {$\scriptstyle g$};
	%Phantom top label to get correct vertical positioning.
	\node[anchor=south] at (1.25, 2.5) {$\vphantom{\scriptstyle f}$};
	\end{tikzpicture}\\
	\overset{\eqref{eq:gradedSkewSymmetry}}{=}
	\ -\
	\begin{tikzpicture}[anchorbase]
	\draw[bmod] (0.5, 0.75) -- (0.25, 1);
	\draw[bmod] (0, 0.75) -- (0.25, 1) -- (0.25, 1.25) -- (0.25, 1.5) -- (0.5, 1.75);
	\draw[<gmod] (0.75, 0.75) to[out=90, in=270] (0, 1.75) arc(180:0:0.25) -- (0.75, 1.5) -- (0.75, 1.25)  arc(180:360:0.125) -- (1, 2.25);
	\node[anchor=north] at (0, 0.75) {$\scriptstyle f$};
	\btoken{0.5, 1.75}{west}{ab};
	\node[anchor=north] at (0.5, 0.75) {$\scriptstyle g$};
	%Phantom top label to get correct vertical positioning.
	\node[anchor=south] at (1, 2.25) {$\vphantom{\scriptstyle f}$};
	\end{tikzpicture}
	+
	\begin{tikzpicture}[anchorbase]
	\draw[bmod] (0.25, -0.25) -- (0.25, 0) to[out=90, in=270] (0, 0.25) to[out=90, in=270] (0.5, 0.75) -- (0.5, 1.25) -- (0.75, 1.5);
	\draw[bmod] (0, -0.25) -- (0, 0) to[out=90, in=270] (0.25, 0.25) to[out=90, in=270] (0, 0.5) to[out=90, in=270] (0, 0.75) to[out=90, in=270] (0.25, 1) -- (0.25, 1.75) -- (0.5, 2);
	\draw[<gmod] (0.5, -0.25) -- (0.5, 0.5) to[out=90, in=270] (0, 1) -- (0, 2) arc(180:0:0.25) -- (0.75, 1.75) -- (0.75, 1.5) -- (1, 1.25) -- (1, 1) arc(180:360:0.125) -- (1.25, 2.5);
	\btoken{0.5, 2}{west}{a};
	\node[anchor=north] at (0, -0.25) {$\scriptstyle f$};
	\btoken{0.75, 1.5}{west}{b};
	\node[anchor=north] at (0.25, -0.25) {$\scriptstyle g$};
	%Phantom top label to get correct vertical positioning.
	\node[anchor=south] at (1.25, 2.5) {$\vphantom{\scriptstyle f}$};
	\end{tikzpicture}
	\overset{\eqref{eq:dualModuleCalculation}}{=}
	\begin{tikzpicture}[anchorbase]
	\draw[bmod] (0.5, 0) -- (0.25, 0.25);
	\draw[bmod] (0, 0) -- (0.25, 0.25) -- (0.5, 0.5);
	\draw[<gmod] (0.75, 0) -- (0.75, 0.25) -- (0.5, 0.5) -- (0.5, 0.75);
	\btoken[2px]{0.5, 0.5}{west}{ab};
	\node[anchor=north] at (0, 0) {$\scriptstyle f$};
	\node[anchor=north] at (0.5, 0) {$\scriptstyle g$};
	%Phantom top label to get correct vertical positioning.
	\node[anchor=south] at (0.5, 0.75) {$\vphantom{\scriptstyle f}$};
	\end{tikzpicture}
	\ +\
	\begin{tikzpicture}[anchorbase]
	\draw[bmod] (0, -0.25) -- (0.25, 0) -- (0.5, 0.25);
	\draw[bmod] (0.25, -0.25) -- (0, 0) -- (0, 0.25) -- (0.25, 0.5);
	\draw[<gmod] (0.75, -0.25) -- (0.75, 0) -- (0.5, 0.25) -- (0.25, 0.5) -- (0.25, 0.75);
	\btoken{0.5, 0.25}{west}{a};
	\btoken[2px]{0.25, 0.5}{west}{b};
	\node[anchor=north] at (0, -0.25) {$\scriptstyle f$};
	\node[anchor=north] at (0.25, -0.25) {$\scriptstyle g$};
	%Phantom top label to get correct vertical positioning.
	\node[anchor=south] at (0.25, 0.75) {$\vphantom{\scriptstyle f}$};
	\end{tikzpicture}\ ,
	\end{multline*}
	showing that the action morphisms for $\begin{tikzpicture}[anchorbase]
	\draw[<gmod] (0, 0) -- (0, 0.5);
	\end{tikzpicture}$ satisfy \eqref{eq:equivariantAMapCompatibility}. To see that $\begin{tikzpicture}[anchorbase]
	\draw[gmod>] (0, 0) -- (0, 0.5);
	\end{tikzpicture}$ and $\begin{tikzpicture}[anchorbase]
	\draw[<gmod] (0, 0) -- (0, 0.5);
	\end{tikzpicture}$ are dual modules, it suffices to show that $\begin{tikzpicture}[anchorbase]
	\draw[gmod>] (0, 0) -- (0, 0.25) arc(180:0:0.25) -- (0.5, 0);
	\end{tikzpicture}$ and $\begin{tikzpicture}[anchorbase, yscale=-1]
	\draw[gmod>] (0, 0) -- (0, 0.25) arc(180:0:0.25) -- (0.5, 0);
	\end{tikzpicture}$ (that is, the cap and cup for the underlying dual pair of objects in $\CC$) are morphisms of equivariant $(\Gamma, L, A)$-modules. We will prove the claim for the cap; the argument for the cup is analogous. Let $f \in \hat{\Gamma}$ and $a \in A_{f^{-1}}$. We compute:
	\begin{multline*}
	\begin{tikzpicture}[anchorbase]
	\draw[bmod] (0, 0) -- (0.5, 0.5);
	\draw[gmod>] (0.5, 0) -- (0, 0.5) -- (0, 0.75) arc(180:0:0.25) -- (0.5, 0.5) -- (1, 0);
	\btoken{0.5, 0.5}{west}{a};
	\node[anchor=north] at (0, 0) {$\scriptstyle f$};
	%Phantom top label to get correct vertical positioning.
	\node[anchor=south] at (0.25, 0.75) {$\vphantom{\scriptstyle f}$};
	\end{tikzpicture}
	=
	-\
	\begin{tikzpicture}[anchorbase]
	\draw[bmod] (0, 0) -- (0.75, 0.75);
	\draw[<gmod] (0.75, 0) to[out=90, in=270] (0.25, 0.5) -- (0.25, 0.75) arc(180:0:0.25) to[out=315, in=90] (1, 0.5) arc(180:360:0.125) -- (1.25, 1) arc(0:180:0.625) -- (0, 0.375) to[out=270, in=90] (0.375, 0);
	\btoken{0.75, 0.75}{west}{a};
	\node[anchor=north] at (0, 0) {$\scriptstyle f$};
	\end{tikzpicture}
	\overset{\eqref{eq:crossingIdentities}}{=}
	-\ 
	\begin{tikzpicture}[anchorbase]
	\draw[bmod] (0, 0) to[out=90, in=270] (0.375, 0.375) -- (0.375, 0.5)  to[out=90, in=270] (0, 1) -- (0, 1.25) to[out=90, in=225] (0.25, 1.5);
	\draw[gmod>] (0.25, 0) to[out=90, in=270] (0, 0.25) -- (0, 0.75) arc(180:0:0.5) arc(360:180:0.25) to[out=90, in=315] (0.25, 1.5) arc(0:180:0.25) -- (-0.25, 0.75) to[out=270, in=89] (0.5, 0);
	\btoken{0.25, 1.5}{west}{a};
	\node[anchor=north] at (0, 0) {$\scriptstyle f$};
	\end{tikzpicture}
	\underset{\eqref{eq:delooping}}{\overset{\eqref{eq:zigzag}}{=}}
	-\ 
	\begin{tikzpicture}[anchorbase]
	\draw[bmod] (0, 0) to[out=90, in=270] (0.75, 0.75) to[out=90, in=270] (0.25, 1.25) to[out=90, in=270] (0.5, 1.5);
	\draw[gmod>] (0.5, 0) to[out=90, in=270] (0, 0.5) to[out=90, in=270] (0.75, 1.25) to[out=90, in=270] (0.5, 1.5) -- (0.5, 1.75) arc(0:180:0.25) -- (0, 1) to[out=270, in=90] (1, 0);
	\btoken{0.5, 1.5}{west}{a};
	\node[anchor=north] at (0, 0) {$\scriptstyle f$};
	\end{tikzpicture}
	\\
	\overset{\eqref{eq:crossingIdentities}}{=}
	-\ 
	\begin{tikzpicture}[anchorbase]
	\draw[bmod] (0, 0) to[out=90, in=270] (0.75, 0.75) to[out=90, in=270] (0.5, 1) to[out=90, in=270] (0.625, 1.125);
	\draw[gmod>] (0.25, 0) to[out=90, in=270] (0.75, 0.5) to[out=90, in=270] (0.5, 0.75) to[out=90, in=270] (0.75, 1) to[out=90, in=270] (0.625, 1.125) -- (0.625, 1.375) arc(0:180:0.25) -- (0.125, 0.875) to[out=270, in=90] (0.75, 0);
	\btoken{0.625, 1.125}{west}{a};
	\node[anchor=north] at (0, 0) {$\scriptstyle f$};
	\end{tikzpicture}
	\overset{\eqref{eq:crossingIdentities}}{=}
	-\ 
	\begin{tikzpicture}[anchorbase]
	\draw[bmod] (0, 0) -- (1, 1);
	\draw[gmod>] (0.5, 0) to[out=90, in=270] (1.25, 0.75) to[out=90, in=270] (1, 1) -- (1, 1.25) arc(0:180:0.25) -- (0.5, 1) -- (0.5, 0.5) to[out=270, in=90] (1, 0);
	\btoken{1, 1}{west}{a};
	\node[anchor=north] at (0, 0) {$\scriptstyle f$};
	\end{tikzpicture}
	\overset{\eqref{eq:crossingIdentities}}{=}
	-\ 
	\begin{tikzpicture}[anchorbase]
	\draw[bmod] (0, 0) -- (0.25, 0.25);
	\draw[gmod>] (0.5, 0) -- (0.25, 0.25) -- (0.25, 0.5) to[out=90, in=270] (0.75, 1) arc(0:180:0.25) to[out=270, in=90] (0.75, 0.5) -- (0.75, 0);
	\btoken{0.25, 0.25}{east}{a};
	\node[anchor=north] at (0, 0) {$\scriptstyle f$};
	\end{tikzpicture}
	\overset{\eqref{eq:delooping}}{=}
	-\ 
	\begin{tikzpicture}[anchorbase]
	\draw[bmod] (0, 0) -- (0.25, 0.25);
	\draw[gmod>] (0.5, 0) -- (0.25, 0.25) -- (0.25, 0.5) arc(180:0:0.25) -- (0.75, 0);
	\btoken{0.25, 0.25}{east}{a};
	\node[anchor=north] at (0, 0) {$\scriptstyle f$};
	%Phantom top label to get correct vertical positioning.
	\node[anchor=south] at (0.25, 0.5) {$\vphantom{\scriptstyle f}$};
	\end{tikzpicture}\ ,
	\end{multline*}
	and hence $\begin{tikzpicture}[anchorbase]
	\draw[bmod] (0, 0) -- (0.25, 0.25);
	\draw[gmod>] (0.5, 0) -- (0.25, 0.25) -- (0.25, 0.5) arc(180:0:0.25) -- (0.75, 0);
	\btoken{0.25, 0.25}{east}{a};
	\node[anchor=north] at (0, 0) {$\scriptstyle f$};
	%Phantom top label to get correct vertical positioning.
	\node[anchor=south] at (0.25, 0.5) {$\vphantom{\scriptstyle f}$};
	\end{tikzpicture}
	\ +\
	\begin{tikzpicture}[anchorbase]
	\draw[bmod] (0, 0) -- (0.5, 0.5);
	\draw[gmod>] (0.5, 0) -- (0, 0.5) -- (0, 0.75) arc(180:0:0.25) -- (0.5, 0.5) -- (1, 0);
	\btoken{0.5, 0.5}{west}{a};
	\node[anchor=north] at (0, 0) {$\scriptstyle f$};
	%Phantom top label to get correct vertical positioning.
	\node[anchor=south] at (0.25, 0.75) {$\vphantom{\scriptstyle f}$};
	\end{tikzpicture} = 0$. Since the monoidal unit in $(\Gamma, L, A)\dashmod$ is equipped with the trivial module structure, this shows that \eqref{eq:equivariantAMapMorphism} holds, and hence $\begin{tikzpicture}[anchorbase]
	\draw[gmod>] (0, 0) -- (0, 0.25) arc(180:0:0.25) -- (0.5, 0);
	\end{tikzpicture}$ is a morphism of equivariant $(\Gamma, L, A)$-modules, as desired.
\end{proof}

\begin{lem} \label{lem:symmetricMonoidalFunctorsPreserve}
	Let $\DD$ be a symmetric monoidal $\kk$-linear category. Suppose $F \colon \CC \to \DD$ is a symmetric monoidal functor, and recall that $L$ is a $\hat{\Gamma}$-graded Lie algebra in $\CC$. Then $F(L)$ is a $\hat{\Gamma}$-graded Lie algebra in $\DD$, and there is an induced functor $F \colon (\Gamma, L, A)\dashmod \to (\Gamma, F(L), A)\dashmod$, described as follows.
	
	Let $\green{V}$ and $\red{W}$ be equivariant $(\Gamma, L, A)$-modules, and $\begin{tikzpicture}[anchorbase]
	\draw[gmod] (0, 0) -- (0, 0.25);
	\draw[rmod] (0, 0.25) -- (0, 0.5);
	\adot{(0, 0.25)};
	\end{tikzpicture}$ a morphism between them. Then $F(\green{V})$ and $F(\red{W})$ are equivariant $(\Gamma, F(L), A)$-modules, and $F\left(\begin{tikzpicture}[anchorbase]
	\draw[gmod] (0, 0) -- (0, 0.25);
	\draw[rmod] (0, 0.25) -- (0, 0.5);
	\adot{(0, 0.25)};
	\end{tikzpicture} \right)$ is a morphism of equivariant $(\Gamma, F(L), A)$-modules from $F(\green{V})$ to $F(\red{W})$. The $\hat{\Gamma}$-grading on $F(L)$ is given by $F(L)_f = F(L_f)$ for each $f \in \hat{\Gamma}$, with graded Lie bracket morphisms given by $\begin{tikzpicture}[anchorbase]
	\draw[bmod] (0, 0) -- (0.25, 0.25) -- (0.25, 0.5);
	\draw[bmod] (0.5, 0) -- (0.25, 0.25);
	\node at (-0.25, -0.25) {$\scriptstyle F(L)_f$};
	\node at (0.75, -0.25) {$\scriptstyle F(L)_g$};
	\node at (0.25, 0.75) {$\scriptstyle F(L)_{fg}$};
	\end{tikzpicture} = F\left(\begin{tikzpicture}[anchorbase]
	\draw[bmod] (0, 0) -- (0.25, 0.25) -- (0.25, 0.5);
	\draw[bmod] (0.5, 0) -- (0.25, 0.25);
	\node at (0, -0.25) {$\scriptstyle L_f$};
	\node at (0.5, -0.25) {$\scriptstyle L_g$};
	\node at (0.25, 0.75) {$\scriptstyle L_{fg}$};
	\end{tikzpicture} \right)$ for all $f, g \in \hat{\Gamma}$. The action morphisms for $F(\green{V})$ are given by $\deamapmult{a}{F(L)_f} = F \left(\deamapmult{a}{L_f} \right)$ for each $f \in \hat{\Gamma}$ and $a \in A_{f^{-1}}$, and similarly for $F(\red{W})$.
\end{lem}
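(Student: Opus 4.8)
The plan is to use the single structural fact that a symmetric monoidal $\kk$-linear functor $F$ preserves every ingredient appearing in Definitions \ref{def:gradedLieAlgebraObject} and \ref{def:equivariantModule}: being a functor it preserves composition and identity morphisms, being symmetric monoidal it sends $\otimes$ to $\otimes$ (up to the structure morphisms $F(X)\otimes F(Y)\to F(X\otimes Y)$) and the symmetric braiding of $\CC$ to that of $\DD$, and being $\kk$-linear it preserves $\kk$-linear combinations and the zero morphism. Each of \eqref{eq:gradedSkewSymmetry}, \eqref{eq:gradedJacobi}, \eqref{eq:equivariantAMapLinearity}, \eqref{eq:equivariantAMapCompatibility}, and \eqref{eq:equivariantAMapMorphism} is an equation between morphisms assembled by composition and tensor product out of braidings, graded bracket morphisms, action morphisms, and scalars, so the whole proof consists of applying $F$ to these equations one at a time. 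To keep the monoidal structure morphisms of $F$ from cluttering the bookkeeping I would, as usual, invoke Mac Lane coherence --- working with strict $\CC$, $\DD$ and treating the graphical calculus as transported by $F$ --- and only return to the structure morphisms in the final remark below.

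First I would verify that $F(L)$ is a $\hat{\Gamma}$-graded Lie algebra with the stated data. The object family is $\{F(L)_f = F(L_f)\}$ and the graded bracket $F(L)_f \otimes F(L)_g \to F(L)_{fg} = F(L_{fg})$ is by definition the image under $F$ of the graded bracket of $L$, so the grading is respected automatically. Applying $F$ to \eqref{eq:gradedSkewSymmetry}: the left-hand side is $F$ of a braiding composed with a graded bracket, which becomes the $\DD$-braiding composed with $F$ of that bracket, while the right-hand side is $F$ of $-(\text{bracket})$, which equals $-\,F(\text{bracket})$; this is \eqref{eq:gradedSkewSymmetry} for $F(L)$. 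The same move, applied termwise to \eqref{eq:gradedJacobi} --- both of whose sides are composites of two graded bracket morphisms, one of them precomposed with a braiding --- yields \eqref{eq:gradedJacobi} for $F(L)$.

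Next I would build the induced functor. On an object $\green V$ of $(\Gamma, L, A)\dashmod$, equip $F(\green V)$ with the action morphisms $F\!\left(\deamapmult{a}{L_f}\right)$ for $f \in \hat{\Gamma}$, $a \in A_{f^{-1}}$, and the zero morphism otherwise; linearity \eqref{eq:equivariantAMapLinearity} for $F(\green V)$ is immediate from $\kk$-linearity of $F$, and applying $F$ to \eqref{eq:equivariantAMapCompatibility} for $\green V$ produces \eqref{eq:equivariantAMapCompatibility} for $F(\green V)$ in the same way --- each of the three diagrams there is a graded bracket composed with an action morphism, with a braiding inserted in two of the terms. Given a morphism $\begin{tikzpicture}[anchorbase]
\draw[gmod] (0, 0) -- (0, 0.25);
\draw[rmod] (0, 0.25) -- (0, 0.5);
\adot{(0, 0.25)};
\end{tikzpicture}\colon \green V \to \red W$ of equivariant $(\Gamma, L, A)$-modules, applying $F$ to the identity \eqref{eq:equivariantAMapMorphism} that it satisfies shows that its image is a morphism $F(\green V) \to F(\red W)$ of equivariant $(\Gamma, F(L), A)$-modules. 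That this assignment preserves identities and composition is inherited verbatim from the functoriality of $F \colon \CC \to \DD$.

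The main (and essentially only) obstacle is cosmetic rather than conceptual: without passing to strict categories, $F(L)_f \otimes F(L)_g$ is only related to $F(L_f \otimes L_g)$ through the monoidal structure morphism of $F$, so the graded bracket of $F(L)$ is really this morphism followed by $F$ of the bracket, and each diagrammatic identity above has to be rewritten with such structure morphisms inserted and then shown to cancel consistently. That cancellation is exactly what the coherence axioms for a symmetric monoidal functor guarantee --- they are what make the relevant naturality squares and hexagons line up --- so I would handle it by appeal to coherence rather than by hand; everything else is a routine termwise use of the three properties of $F$ recorded at the start.
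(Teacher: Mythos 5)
Your proposal is correct and takes essentially the same approach as the paper's (much terser) proof: apply $F$ to each of the defining identities \eqref{eq:gradedSkewSymmetry}, \eqref{eq:gradedJacobi}, \eqref{eq:equivariantAMapLinearity}, \eqref{eq:equivariantAMapCompatibility}, \eqref{eq:equivariantAMapMorphism}, using that a symmetric monoidal $\kk$-linear functor preserves composition, tensor products, braidings, and $\kk$-linear combinations. You simply spell out the details --- including the coherence remark about the monoidal structure morphisms --- that the paper's two-sentence proof leaves implicit.
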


\begin{proof}
	The identities needed for $F(L)$ to be a $\hat{\Gamma}$-graded Lie algebra, namely \eqref{eq:gradedSkewSymmetry} and \eqref{eq:gradedJacobi}, are the $F$-images of the corresponding identities for $L$, and hence they hold. The other claims follow similarly.
\end{proof}

In remainder of this subsection we will study various ungraded constructions, and as such we drop the assumption that $L$ is a $\hat{\Gamma}$-graded Lie algebra.

The next few definitions and lemmas will allow us to construct categorical analogues of the Lie algebra $\gl_n$, its natural module, and that module's dual.

\begin{defin} \label{def:semigroupObject}
	A \emph{semigroup} in $\CC$ is an object $S$ equipped with a morphism $\dmult \colon S \otimes S \to S$ satisfying the following associativity identity:
	\begin{equation}\label{eq:associativity}\tag{ASSOC}
	\begin{tikzpicture}[anchorbase]
	\draw[bmod] (0, 0) -- (0.25, 0.25) -- (0.5, 0.5) -- (0.5, 0.75);
	\draw[bmod] (0.5, 0) -- (0.25, 0.25);
	\draw[bmod] (1, 0) -- (0.5, 0.5);
	\end{tikzpicture} 
	\ =\
	\begin{tikzpicture}[anchorbase]
	\draw[bmod] (0, 0) -- (-0.25, 0.25) -- (-0.5, 0.5) -- (-0.5, 0.75);
	\draw[bmod] (-0.5, 0) -- (-0.25, 0.25);
	\draw[bmod] (-1, 0) -- (-0.5, 0.5);
	\end{tikzpicture}\ .
	\end{equation}
\end{defin}

\begin{lem} \label{lem:dualSemigroup}
	Suppose $\goup$ is an object with dual $\godown$. Then $\left(\goup \otimes \godown, \begin{tikzpicture}[anchorbase]
	\draw[->, thick] (0, 0) to[out=90, in=270] (0.25, 0.5) -- (0.25, 0.75);
	\draw[<-, thick] (0.25, 0) -- (0.25, 0.2) arc(180:0:0.125) -- (0.5, 0);
	\draw[<-, thick] (0.75, 0) to[out=90, in=270] (0.5, 0.5) -- (0.5, 0.75);
	\end{tikzpicture} \right)$ is a semigroup.
\end{lem}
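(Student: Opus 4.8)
The plan is to verify the associativity identity \eqref{eq:associativity} by expanding both sides in terms of the proposed multiplication. Write $S := \goup \otimes \godown$, write $m \colon S \otimes S \to S$ for the morphism displayed in the statement, and write $\widetilde{\ev} \colon \godown \otimes \goup \to \one$ for the cap appearing in it, so that $m = \id_{\goup} \otimes \widetilde{\ev} \otimes \id_{\godown}$. (Here $\widetilde{\ev}$ is the left evaluation for the dual pair $(\goup, \godown)$, available by the \eqref{eq:delooping} conventions, but its internal structure is irrelevant for what follows.)

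First I would compute the left-hand side $m \circ (m \otimes \id_S) \colon \goup \otimes \godown \otimes \goup \otimes \godown \otimes \goup \otimes \godown \to \goup \otimes \godown$. The inner arrow $m \otimes \id_S$ applies $\widetilde{\ev}$ to the second and third tensor factors and is the identity on the rest; the outer $m$ then applies $\widetilde{\ev}$ to the second and third factors of the intermediate object, which are the original fourth and fifth factors. Tracking the (co)domains of the two intermediate morphisms and using the interchange law to slide the two disjoint applications of $\widetilde{\ev}$ past each other, one sees that this composite equals $\id_{\goup} \otimes \widetilde{\ev} \otimes \widetilde{\ev} \otimes \id_{\godown}$, with the two caps acting on the factor pairs in positions $(2,3)$ and $(4,5)$. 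Running the same bookkeeping on the right-hand side $m \circ (\id_S \otimes m)$ — where $\id_S \otimes m$ caps positions $(4,5)$ first and then $m$ caps positions $(2,3)$ — yields the same morphism, so
\[
m \circ (m \otimes \id_S) \;=\; \id_{\goup} \otimes \widetilde{\ev} \otimes \widetilde{\ev} \otimes \id_{\godown} \;=\; m \circ (\id_S \otimes m),
\]
which is exactly \eqref{eq:associativity}. Diagrammatically this is the visually evident fact that in either bracketing one draws the same picture: the leftmost $\goup$-strand and the rightmost $\godown$-strand run straight to the output, and the four intermediate strands are joined by two caps.

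There is no genuine obstacle here; the only thing requiring care is the bookkeeping of which pair of intermediate strands the outer multiplication's cap lands on in each bracketing, and that is immediate once the domains of the intermediate arrows are written out (and completely transparent from the string diagram). Note that only the interchange law is needed — neither \eqref{eq:zigzag} nor the full strength of \eqref{eq:delooping} enters — so the whole argument could equally be presented as a one-line string-diagram manipulation.
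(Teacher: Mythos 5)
Your proof is correct and takes essentially the same approach as the paper: expand both sides of \eqref{eq:associativity}, observe that the two caps act on disjoint strand pairs, and conclude equality by the interchange law. Your write-up is just a more verbose, symbol-level version of the paper's one-line diagrammatic argument.
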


\begin{proof}
	The left-hand side of \eqref{eq:associativity} is $\begin{tikzpicture}[anchorbase]
	\draw[->, thick] (0, 0) to[out=90, in=270] (0.25, 0.5) -- (0.25, 1.25) -- (0.25, 1.5);
	\draw[<-, thick] (0.25, 0) -- (0.25, 0.2) arc(180:0:0.125) -- (0.5, 0);
	\draw[<-, thick] (0.75, 0) to[out=90, in=270] (0.5, 0.5) -- (0.5, 0.75) arc(180:0:0.25) -- (1, 0);
	\draw[<-, thick] (1.25, 0) -- (1.25, 0.75) to[out=90, in=270] (1, 1.25) -- (1, 1.5);
	\end{tikzpicture}$, and the right-hand side is $\begin{tikzpicture}[anchorbase, xscale=-1]
	\draw[<-, thick] (0, 0) to[out=90, in=270] (0.25, 0.5) -- (0.25, 1.25) -- (0.25, 1.5);
	\draw[->, thick] (0.25, 0) -- (0.25, 0.2) arc(180:0:0.125) -- (0.5, 0);
	\draw[->, thick] (0.75, 0) to[out=90, in=270] (0.5, 0.5) -- (0.5, 0.75) arc(180:0:0.25) -- (1, 0);
	\draw[->, thick] (1.25, 0) -- (1.25, 0.75) to[out=90, in=270] (1, 1.25) -- (1, 1.5);
	\end{tikzpicture}$. These morphisms are equal due to the interchange law.
\end{proof}

\begin{lem} \label{lem:semigroupLieObject}
	Suppose $\left(S, \dmult\right)$ is a semigroup. Then $\left(S, \dmult - \begin{tikzpicture}[anchorbase]
	\draw[bmod] (0, 0.25) -- (0.5, 0.75) -- (0.25, 1) -- (0.25, 1.25);
	\draw[bmod] (0.5, 0.25) -- (0, 0.75) -- (0.25, 1);
	\end{tikzpicture} \right)$ is a Lie algebra. 
\end{lem}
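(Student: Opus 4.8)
The plan is to recognize this as the string-diagrammatic incarnation of the classical fact that an associative algebra becomes a Lie algebra under the commutator bracket. Write $m = \dmult$ for the multiplication of $S$, write $\sigma$ for the symmetric braiding of $S$ with itself (the crossing $\dcross$), and set $\beta = m - m\circ\sigma$ for the proposed bracket, so that the claim is that $(S,\beta)$ satisfies \eqref{eq:skew} and \eqref{eq:jacobi}.

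First I would dispose of \eqref{eq:skew}: precomposing $\beta$ with $\sigma$ gives $\beta\circ\sigma = m\circ\sigma - m\circ\sigma\circ\sigma = m\circ\sigma - m = -\beta$, where the middle equality is the first relation in \eqref{eq:crossingIdentities} (that is, $\sigma\circ\sigma = \id_{S\otimes S}$). That is all of (SKEW).

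The substance is \eqref{eq:jacobi}. I would expand each of the three bracket-trees in that identity by substituting $\beta = m - m\circ\sigma$, producing $4+4+4 = 12$ terms in all, each of which is a binary tree of two $m$'s with three leaves, precomposed with some composite of braidings on $S^{\otimes 3}$. Using \eqref{eq:associativity} I would rewrite every such tree into the single left-normalized ternary product $P := m\circ(m\otimes\id_S)$; using the naturality of the braiding — the second and third relations in \eqref{eq:crossingIdentities}, which allow sliding $m$ past crossings — together with the coherence of symmetric monoidal categories, I would then bring each term to the canonical form $\pm\, P\circ\sigma_w$ for a well-defined permutation braiding $\sigma_w$ indexed by $w \in S_3$. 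Concretely, every term becomes a signed ``monomial'' recording one of the six orderings of the three inputs, and it remains only to check that the signed sum of the four terms on the left agrees with the signed sum of the eight terms on the right, monomial by monomial — which is precisely the identity $[x,[y,z]] = [[x,y],z] + [y,[x,z]]$ in the free associative algebra on three generators, hence true.

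I expect the only genuine work to be the bookkeeping in this last step: tracking which relation of \eqref{eq:crossingIdentities} is applied to move each term into canonical form, and keeping the signs consistent. There is no conceptual obstacle, since once everything is expressed through $P$ and permutation braidings, symmetric monoidal coherence guarantees that two terms with the same underlying permutation are literally equal, so the verification collapses to the familiar combinatorial identity.
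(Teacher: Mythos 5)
Your proposal is correct and follows essentially the same route as the paper, which simply states that \eqref{eq:skew} is clear and that \eqref{eq:jacobi} follows from \eqref{eq:associativity} and \eqref{eq:crossingIdentities}; your write-up supplies the expansion-into-monomials bookkeeping that the paper leaves implicit.
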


\begin{proof}
	It is clear that the claimed Lie bracket satisfies \eqref{eq:skew}, and \eqref{eq:jacobi} follows straightforwardly from \eqref{eq:associativity} and \eqref{eq:crossingIdentities}.
\end{proof}

\begin{lem} \label{lem:orientedBrauerLieObject}
	Suppose $\goup$ is an object with dual $\godown$. Then $\left(\goup \otimes \godown, \begin{tikzpicture}[anchorbase]
	\draw[->, thick] (0, 0) to[out=90, in=270] (0.25, 0.5) -- (0.25, 0.75);
	\draw[<-, thick] (0.25, 0) -- (0.25, 0.2) arc(180:0:0.125) -- (0.5, 0);
	\draw[<-, thick] (0.75, 0) to[out=90, in=270] (0.5, 0.5) -- (0.5, 0.75);
	\end{tikzpicture}
	\ -\
	\begin{tikzpicture}[anchorbase]
	\draw[->, thick] (0.5, -0.75) -- (0.5, -0.5) to[out=90, in=270] (0, 0) to[out=90, in=270] (0.25, 0.5) -- (0.25, 0.75);
	\draw[<-, thick] (0.75, -0.75) -- (0.75, -0.5) to[out=90, in=270] (0.5, 0) arc(0:180:0.125) to[out=270, in=90] (0, -0.5) -- (0, -0.75);
	\draw[<-, thick] (0.25, -0.75) -- (0.25, -0.5) to[out=90, in=270] (0.75, 0) to[out=90, in=270] (0.5, 0.5) -- (0.5, 0.75);
	\end{tikzpicture} \right)$
	is a Lie algebra, and $\left(\goup,\ \begin{tikzpicture}[anchorbase]
	\draw[->, thick] (0, 0) -- (0, 0.75);
	\draw[->, thick] (0.6, 0) -- (0.6, 0.2) arc(0:180:0.2) -- (0.2, 0);
	\end{tikzpicture} \right)$ and $\left(\godown,\ -\ \begin{tikzpicture}[anchorbase]
	\draw[->, thick] (0, 0) -- (0, 0.25) arc(180:0:0.25) -- (0.5, 0);
	\draw[<-, thick] (0.25, 0) -- (0.25, 0.25) to[out=90, in=270] (0.5, 0.5) -- (0.5, 0.75);
	\end{tikzpicture} \right)$ are a dual pair of modules for this Lie algebra. We call $\goup$ the \emph{natural module} for $\goup \otimes \godown$.
\end{lem}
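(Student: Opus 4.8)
The plan is to assemble the statement from the structural lemmas already in place, so that only two genuinely diagrammatic verifications remain. For the Lie algebra claim, I would simply observe that the bracket displayed in the statement is precisely $m - m\circ\sigma_{\goup\otimes\godown,\,\goup\otimes\godown}$, where $m$ is the multiplication constructed in Lemma~\ref{lem:dualSemigroup}: the first displayed term is $m$, and the second is $m$ precomposed with the symmetric braiding interchanging the two copies of $\goup\otimes\godown$. Since Lemma~\ref{lem:dualSemigroup} makes $(\goup\otimes\godown,m)$ a semigroup, Lemma~\ref{lem:semigroupLieObject} applies verbatim, and the Lie algebra claim is immediate with no further computation.

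For the natural module $\goup$, the only identity to check is \eqref{eq:lmod}. I would first establish the ``associative-module'' identity $m_V\circ(m\otimes\id_{\goup}) = m_V\circ(\id_{\goup\otimes\godown}\otimes m_V)$, where $m_V$ denotes the pictured action: both composites contract the same two pairs of dual strands and leave the first $\goup$ untouched, so they agree by \eqref{eq:zigzag} and the interchange law, exactly as in the proof of associativity in Lemma~\ref{lem:dualSemigroup}. Granting this, \eqref{eq:lmod} follows purely formally: substitute $m_{\mathrm{Lie}} = m - m\circ\sigma$ into the right-hand side of \eqref{eq:lmod}, then use the associative-module identity to rewrite $m_V\circ(m\otimes\id_{\goup})$ as the left-hand side of \eqref{eq:lmod} and $m_V\circ\bigl((m\circ\sigma)\otimes\id_{\goup}\bigr)$ as the remaining braiding term appearing with a plus sign on that right-hand side, whereupon the latter two cancel and the two sides coincide; no use of \eqref{eq:crossingIdentities} is needed.

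For the dual module and the duality, rather than rederive \eqref{eq:lmod} for $\godown$ from scratch I would invoke Lemma~\ref{lem:dualModule}, passing from its $(\Gamma,L,A)$-module form to the plain $L$-module case via Remark~\ref{rem:generalizationsExplained} (i.e.\ $\Gamma = \{1\}$, $A = \kk$), applied to the $L$-module $\goup$ just constructed. This yields in one stroke both that $\godown$ carries an $L$-module structure and that $\goup$ and $\godown$ form a dual pair in $L\dashmod$ — the tensor-product module structure on $\goup\otimes\godown$ being that supplied by Proposition~\ref{prop:symmetricMonoidalModules}. It then only remains to check that the action on $\godown$ produced by Lemma~\ref{lem:dualModule} coincides with the one pictured in the statement; this is a short computation straightening the caps and cups with \eqref{eq:zigzag} and \eqref{eq:delooping}, and in particular both expressions carry the same overall minus sign.

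I expect this last identification to be the main obstacle — not conceptually, but because it is where orientation bookkeeping is most error-prone and where the sign is essential: without the minus sign in the $\godown$-action the cancellations that produce \eqref{eq:lmod} fail, so one must confirm that the two presentations of the dual action agree exactly rather than merely up to sign. The Lie algebra claim and the natural-module identity, by contrast, are essentially mechanical given the earlier lemmas.
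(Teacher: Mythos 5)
Your proposal follows the paper's own route essentially step for step: Lemmas~\ref{lem:dualSemigroup} and \ref{lem:semigroupLieObject} give the Lie algebra, the module identity \eqref{eq:lmod} for $\goup$ is verified via an associativity argument (what the paper calls a ``direct calculation,'' and your observation that \eqref{eq:crossingIdentities} is not needed there is correct), and Lemma~\ref{lem:dualModule}, specialized via Remark~\ref{rem:generalizationsExplained}, supplies the $\godown$-module and the duality after a simplification using \eqref{eq:zigzag} and \eqref{eq:delooping}. The only detail you pass over, which the paper explicitly flags, is that identifying the pictured second bracket term with $m\circ\sigma_{\goup\godown,\goup\godown}$ already requires \eqref{eq:delooping} on the cap, since the cap there reads $\goup\otimes\godown$ left to right rather than the $\godown\otimes\goup$ that appears in Lemma~\ref{lem:dualSemigroup}'s multiplication.
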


\begin{proof}
	Lemmas \ref{lem:dualSemigroup} and \ref{lem:semigroupLieObject} tell us that $\left(\goup \otimes \godown, \begin{tikzpicture}[anchorbase]
	\draw[->, thick] (0, 0) to[out=90, in=270] (0.25, 0.5) -- (0.25, 0.75);
	\draw[<-, thick] (0.25, 0) -- (0.25, 0.2) arc(180:0:0.125) -- (0.5, 0);
	\draw[<-, thick] (0.75, 0) to[out=90, in=270] (0.5, 0.5) -- (0.5, 0.75);
	\end{tikzpicture}
	\ -\
	\begin{tikzpicture}[anchorbase]
	\draw[->, thick] (0.5, -0.75) -- (0.5, -0.5) to[out=90, in=270] (0, 0) to[out=90, in=270] (0.25, 0.5) -- (0.25, 0.75);
	\draw[<-, thick] (0.75, -0.75) -- (0.75, -0.5) to[out=90, in=270] (0.5, 0) arc(0:180:0.125) to[out=270, in=90] (0, -0.5) -- (0, -0.75);
	\draw[<-, thick] (0.25, -0.75) -- (0.25, -0.5) to[out=90, in=270] (0.75, 0) to[out=90, in=270] (0.5, 0.5) -- (0.5, 0.75);
	\end{tikzpicture} \right)$ is a Lie algebra, using \eqref{eq:delooping} on the cap in the negated term. A direct calculation shows that $\left(\goup,\ \begin{tikzpicture}[anchorbase]
	\draw[->, thick] (0, 0) -- (0, 0.75);
	\draw[->, thick] (0.6, 0) -- (0.6, 0.2) arc(0:180:0.2) -- (0.2, 0);
	\end{tikzpicture} \right)$ is a $(\goup \otimes \godown)$-module. Using Lemma \ref{lem:dualModule}, $\left(\godown,\ -\ \begin{tikzpicture}[anchorbase]
	\draw[->, thick] (0, 0) -- (0, 0.25) to[out=90, in=270] (0.25, 0.5) arc(0:180:0.125) to[out=270, in=90] (0.5, 0);
	\draw[<-, thick] (0.25, 0) -- (0.25, 0.25) to[out=90, in=270] (0.5, 0.5) -- (0.5, 0.75) arc(180:0:0.125) -- (0.75, 0.625) arc(180:360:0.125) -- (1, 1);
	\end{tikzpicture} \right) \underset{\eqref{eq:zigzag}}{\overset{\eqref{eq:delooping}}{=}} \left(\godown,\ -\ \begin{tikzpicture}[anchorbase]
	\draw[->, thick] (0, 0) -- (0, 0.25) arc(180:0:0.25) -- (0.5, 0);
	\draw[<-, thick] (0.25, 0) -- (0.25, 0.25) to[out=90, in=270] (0.5, 0.5) -- (0.5, 0.75);
	\end{tikzpicture} \right)$ is the dual of that module.
\end{proof}

\begin{eg}\label{ex:orientedBrauerLieObject}
	When working in $\CC = \kkvec$, a semigroup is precisely an associative algebra. In this context, Lemma \ref{lem:semigroupLieObject} describes the familiar construction of a Lie algebra from an associative algebra, with Lie bracket given by the commutator: $[a, b] = ab - ba$. Taking $\goup$ to be a finite-dimensional vector space $V$, the semigroup $\goup \otimes \godown$ is $V \otimes V^* \isom \End(V)$ (with the product being composition), and the associated Lie algebra is $\gl(V)$ (with the commutator Lie bracket). In this case, the module $\goup$ discussed in Lemma \ref{lem:orientedBrauerLieObject} is simply $V$ equipped with the natural $\End(V)$-action, and $\godown$ is the dual module $V^*$.
\end{eg}

If $\g$ is a Lie algebra in $\kkvec$ and $V$ is a $\g$-module, then $V \otimes (\kk[t] / (t - a)^k)$ is naturally a $\g$-current module for each $a \in \kk$ and $k \in \N$. The following construction generalizes this family of current modules.

\begin{prop}\label{prop:truncatedPolynomialModule}
	Suppose $\CC$ is additive, and let $k \in \N$. Let $L$ be a Lie algebra and $\green{V}$ a current $L$-module. Set $\red{W} = \green{V}^{\oplus k}$, and for each $n \in \N$, define $$\damapmultred{t^n} = \bm\binom{n}{0}\damapmult[2px]{t^n} & 0 & 0 & \dotsb & 0\\ 
	\binom{n}{1} \damapmult[4px]{t^{n - 1}} & \binom{n}{0}\damapmult[2px]{t^n} & 0 & \dotsb & 0\\
	\binom{n}{2} \damapmult[4px]{t^{n - 2}} & \binom{n}{1} \damapmult[4px]{t^{n - 1}} & \binom{n}{0} \damapmult[2px]{t^n} & \dotsb & 0\\
	\vdots & \vdots & \vdots & \ddots & \vdots \\
	\binom{n}{k - 1} \damapmult[4px]{t^{n - k + 1}} & \binom{n}{k - 2} \damapmult[4px]{t^{n - k + 2}} & \binom{n}{k - 3} \damapmult[4px]{t^{n - k + 3}} & \dotsb & \binom{n}{0} \damapmult[2px]{t^n} \nm.$$
	In other words, $\left[\damapmultred{t^n}\right]_{ij} = \binom{n}{i - j} \damapmult[3px]{t^{n - i + j}}$. (Here we adopt the convention that $\damapmult{t^n} = 0$ for $n < 0$ and $\binom{n}{k} = 0$ for $k < 0$.) Equipped with these action morphisms, $\red{W}$ is a current $L$-module.
\end{prop}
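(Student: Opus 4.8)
The plan is to verify the two defining conditions \eqref{eq:amaplinearity} and \eqref{eq:amapcompatibility} of a current $L$-module for $\red W$, noting first that $\damapmultred{t^n}$ is a genuine $\CC$-morphism $L\otimes\red W\to\red W$ because $\CC$ is additive and $\red W=\green V^{\oplus k}$. Since action morphisms have only been prescribed on the basis $\{t^n:n\in\N\}$ of $\kk[t]$, the linearity relation \eqref{eq:amaplinearity} is automatic after extending by linearity, and by the remark following Definition~\ref{def:aMapModule} it suffices to establish \eqref{eq:amapcompatibility} for $a=t^m$ and $b=t^n$. Intuitively, $\red W$ is the ``truncated polynomial'' current module $\green V\otimes_\kk\kk[s]/(s^k)$, whose current variable acts as the current variable of $\green V$ plus multiplication by the nilpotent $s$: writing $\rho_n:=\damapmult{t^n}$ for the action morphisms of $\green V$, the entry $\bigl[\damapmultred{t^n}\bigr]_{ij}=\binom{n}{i-j}\rho_{n-i+j}$ is precisely the coefficient of $s^i$ in the action of $x\otimes(t+s)^n$ on $v\otimes s^j$. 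Rather than develop this base-change formalism, I would give the direct diagram computation.

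First I would unwind \eqref{eq:amapcompatibility} for $\red W$ into components. Inserting $\sum_j(\mathrm{incl}_j)\circ(\mathrm{proj}_j)=\id_{\red W}$ (the biproduct identity \eqref{eq:biproduct2}) between the two stacked action morphisms on the left-hand side, and writing $[\,\cdot\,]_{il}$ for the $(i,l)$-component $L\otimes\green V\to\green V$ (resp.\ $L\otimes L\otimes\green V\to\green V$) of a morphism into $\red W$, the identity \eqref{eq:amapcompatibility} for $\red W$ with $a=t^m$, $b=t^n$ becomes the family of identities
\[
\sum_{j}\bigl[\damapmultred{t^m}\bigr]_{ij}\circ\bigl(\id_L\otimes\bigl[\damapmultred{t^n}\bigr]_{jl}\bigr)
=
\bigl[\damapmultred{t^{m+n}}\bigr]_{il}\circ(\dmult\otimes\id_{\green V})
+
\sum_{j}\bigl[\damapmultred{t^n}\bigr]_{ij}\circ\bigl(\id_L\otimes\bigl[\damapmultred{t^m}\bigr]_{jl}\bigr)\circ(\sigma_{L,L}\otimes\id_{\green V}),
\]
one for each $i,l\in\{0,\dots,k-1\}$, where $\sigma_{L,L}$ is the symmetric braiding. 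Substituting the formula for the entries and using that for each $j$ the exponents $m-i+j$ and $n-j+l$ sum to the $j$-independent value $m+n-i+l$, I would apply \eqref{eq:amapcompatibility} for the current module $\green V$ to rewrite each summand $\rho_{m-i+j}\circ(\id_L\otimes\rho_{n-j+l})$ on the left as $\rho_{m+n-i+l}\circ(\dmult\otimes\id_{\green V})+\rho_{n-j+l}\circ(\id_L\otimes\rho_{m-i+j})\circ(\sigma_{L,L}\otimes\id_{\green V})$.

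Collecting the first pieces, Vandermonde's identity $\sum_j\binom{m}{i-j}\binom{n}{j-l}=\binom{m+n}{i-l}$ yields exactly $\bigl[\damapmultred{t^{m+n}}\bigr]_{il}\circ(\dmult\otimes\id_{\green V})$; in the remaining sum, the substitution $j\mapsto i+l-j$ carries the $j$-th summand to the $(i+l-j)$-th summand of $\sum_j\bigl[\damapmultred{t^n}\bigr]_{ij}\circ\bigl(\id_L\otimes\bigl[\damapmultred{t^m}\bigr]_{jl}\bigr)\circ(\sigma_{L,L}\otimes\id_{\green V})$, which is the desired right-hand side. The one point requiring care — and the only genuine obstacle, the rest being a routine chase — is the index bookkeeping: all the sums above run over the matrix index set $\{0,\dots,k-1\}$ rather than over $\Z$, so I would observe that, by the stated conventions ($\damapmult{t^N}=0$ for $N<0$ and $\binom{N}{r}=0$ for $r<0$), each such sum coincides with the corresponding sum over all $j\in\Z$ — any extra term vanishes because one of its two binomial coefficients does — which is what makes both the appeal to Vandermonde and the reindexing $j\mapsto i+l-j$ legitimate.
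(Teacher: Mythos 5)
Your proposal is correct and follows essentially the same route as the paper: compute the $(i,l)$-component of \eqref{eq:amapcompatibility} for $\red{W}$, apply \eqref{eq:amapcompatibility} for $\green{V}$ termwise (valid since the exponents sum to the $j$-independent value $m+n-i+l$), collect the product piece via Vandermonde, and reindex the braided piece by $j\mapsto i+l-j$, with the vanishing conventions justifying the range manipulations. The paper packages the Vandermonde-plus-convention bookkeeping into a preliminary identity $(\star)$, $\sum_{a=1}^{k}\binom{m}{i-a}\binom{n}{a-j}=\binom{m+n}{i-j}$, and is terser about the reindexing of the braided summand, which you make helpfully explicit.
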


\begin{proof}
	Let $i, j, m, n \in \N$. For use in the calculation below, we compute:
	\begin{commeqn}
		\summ_{a = 1}^k \binom{m}{i - a} \binom{n}{a - j} &=\summ_{b = 1 - j}^{k - j} \binom{m}{i - j -b}\binom{n}{b} &\commentary{Switching the summation variable to $b := a - j$}\\
		&= \summ_{b = 0}^{k - j} \binom{m}{i - j - b}\binom{n}{b} &\commentary{Since $\binom{n}{b} = 0$ for all $b < 0$}\\
		&= \summ_{b = 0}^{i - j} \binom{m}{i - j - b}\binom{n}{b} &\commentary{Since $\binom{m}{i - j - b} = 0$ for all $b > i - j$}\\
		&= \binom{m + n}{i - j}. &\commentary{Using Vandermonde's identity}
	\end{commeqn}
	Call this identity $(\star)$. We then have:
	\begin{multline*}
		\left(\begin{tikzpicture}[anchorbase]
		\draw[bmod] (0.25, 0) -- (0.5, 0.25);
		\draw[bmod] (0, 0) -- (0, 0.25) -- (0.25, 0.5);
		\draw[rmod] (0.75, 0) -- (0.5, 0.25) -- (0.25, 0.5) -- (0.25, 0.75);
		\btoken{0.5, 0.25}{west}{t^n};
		\btoken{0.25, 0.5}{west}{t^m};
		\end{tikzpicture}\right)_{ij} 
		= \summ_{a = 1}^k \left(\damapmultred{t^m} \right)_{ia} \circ \left(\begin{tikzpicture}[anchorbase]
		\draw[bmod] (0, 0) -- (0, 0.5);
		\end{tikzpicture}\ \damapmultred{t^n} \right)_{aj}
		=
		\summ_{a = 1}^k \binom{m}{i - a}\binom{n}{a - j} \begin{tikzpicture}[anchorbase]
		\draw[bmod] (0.25, 0) -- (0.5, 0.25);
		\draw[bmod] (0, 0) -- (0, 0.25) -- (0.25, 0.5);
		\draw[gmod] (0.75, 0) -- (0.5, 0.25) -- (0.25, 0.5) -- (0.25, 0.75);
		\btoken{0.5, 0.25}{west}{t^{n - a + j}};
		\btoken{0.25, 0.5}{west}{t^{m - i + a}};
		\end{tikzpicture}
		\\
		\underset{(\star)}{\overset{\eqref{eq:lmod}}{=}} 
		\binom{m + n}{i - j}\left(\begin{tikzpicture}[anchorbase]
		\draw[bmod] (0.5, 0) -- (0.25, 0.25);
		\draw[bmod] (0, 0) -- (0.25, 0.25) -- (0.5, 0.5);
		\draw[gmod] (0.75, 0) -- (0.75, 0.25) -- (0.5, 0.5) -- (0.5, 0.75);
		\btoken[2px]{0.5, 0.5}{west}{t^{m + n - i + j}};
		\end{tikzpicture}
		\ +\ 
		\begin{tikzpicture}[anchorbase]
		\draw[bmod] (0, -0.25) -- (0.25, 0) -- (0.5, 0.25);
		\draw[bmod] (0.25, -0.25) -- (0, 0) -- (0, 0.25) -- (0.25, 0.5);
		\draw[gmod] (0.75, -0.25) -- (0.75, 0) -- (0.5, 0.25) -- (0.25, 0.5) -- (0.25, 0.75);
		\btoken{0.5, 0.25}{west}{t^{m - i + a}};
		\btoken[2px]{0.25, 0.5}{west}{t^{n - a + j}};
		\end{tikzpicture}\right)
		\\
		\overset{(\star)}{=} \binom{m + n}{i - j}\begin{tikzpicture}[anchorbase]
		\draw[bmod] (0.5, 0) -- (0.25, 0.25);
		\draw[bmod] (0, 0) -- (0.25, 0.25) -- (0.5, 0.5);
		\draw[gmod] (0.75, 0) -- (0.75, 0.25) -- (0.5, 0.5) -- (0.5, 0.75);
		\btoken[2px]{0.5, 0.5}{west}{t^{m + n - i + j}};
		\end{tikzpicture} + \summ_{a = 1}^k \binom{n}{i - a} \binom{m}{a - j}\begin{tikzpicture}[anchorbase]
		\draw[bmod] (0, -0.25) -- (0.25, 0) -- (0.5, 0.25);
		\draw[bmod] (0.25, -0.25) -- (0, 0) -- (0, 0.25) -- (0.25, 0.5);
		\draw[gmod] (0.75, -0.25) -- (0.75, 0) -- (0.5, 0.25) -- (0.25, 0.5) -- (0.25, 0.75);
		\btoken{0.5, 0.25}{west}{t^{m - i + a}};
		\btoken[2px]{0.25, 0.5}{west}{t^{n - a + j}};
		\end{tikzpicture}
		\\
		= \left(\begin{tikzpicture}[anchorbase]
		\draw[bmod] (0.5, 0) -- (0.25, 0.25);
		\draw[bmod] (0, 0) -- (0.25, 0.25) -- (0.5, 0.5);
		\draw[rmod] (0.75, 0) -- (0.75, 0.25) -- (0.5, 0.5) -- (0.5, 0.75);
		\btoken[2px]{0.5, 0.5}{west}{t^{m + n - i + j}};
		\end{tikzpicture}\right)_{ij} 
		\ +\ 
		\summ_{a = 1}^k \left(\damapmultred{t^n} \right)_{ia} \circ \left( \begin{tikzpicture}[anchorbase]
		\draw[bmod] (0.25, -0.25) -- (0, 0) -- (0, 0.5);
		\draw[bmod] (0, -0.25) -- (0.25, 0) -- (0.5, 0.25);
		\draw[rmod] (0.75, -0.25) -- (0.75, 0) -- (0.5, 0.25) -- (0.5, 0.5);
		\btoken{0.5, 0.25}{west}{t^m};	
		\end{tikzpicture} \right)_{aj}
		= \left(\begin{tikzpicture}[anchorbase]
		\draw[bmod] (0.5, 0) -- (0.25, 0.25);
		\draw[bmod] (0, 0) -- (0.25, 0.25) -- (0.5, 0.5);
		\draw[rmod] (0.75, 0) -- (0.75, 0.25) -- (0.5, 0.5) -- (0.5, 0.75);
		\btoken[2px]{0.5, 0.5}{west}{t^{m + n - i + j}};
		\end{tikzpicture}\right)_{ij} 
		\ +\
		\left(\begin{tikzpicture}[anchorbase]
		\draw[bmod] (0, -0.25) -- (0.25, 0) -- (0.5, 0.25);
		\draw[bmod] (0.25, -0.25) -- (0, 0) -- (0, 0.25) -- (0.25, 0.5);
		\draw[rmod] (0.75, -0.25) -- (0.75, 0) -- (0.5, 0.25) -- (0.25, 0.5) -- (0.25, 0.75);
		\btoken{0.5, 0.25}{west}{t^{m}};
		\btoken[2px]{0.25, 0.5}{west}{t^n};
		\end{tikzpicture} \right)_{ij}.
	\end{multline*}
	Hence $\begin{tikzpicture}[anchorbase]
	\draw[bmod] (0.25, 0) -- (0.5, 0.25);
	\draw[bmod] (0, 0) -- (0, 0.25) -- (0.25, 0.5);
	\draw[rmod] (0.75, 0) -- (0.5, 0.25) -- (0.25, 0.5) -- (0.25, 0.75);
	\btoken{0.5, 0.25}{west}{t^n};
	\btoken{0.25, 0.5}{west}{t^m};
	\end{tikzpicture} = \begin{tikzpicture}[anchorbase]
	\draw[bmod] (0.5, 0) -- (0.25, 0.25);
	\draw[bmod] (0, 0) -- (0.25, 0.25) -- (0.5, 0.5);
	\draw[rmod] (0.75, 0) -- (0.75, 0.25) -- (0.5, 0.5) -- (0.5, 0.75);
	\btoken[2px]{0.5, 0.5}{west}{t^{m + n - i + j}};
	\end{tikzpicture} + \begin{tikzpicture}[anchorbase]
	\draw[bmod] (0, -0.25) -- (0.25, 0) -- (0.5, 0.25);
	\draw[bmod] (0.25, -0.25) -- (0, 0) -- (0, 0.25) -- (0.25, 0.5);
	\draw[rmod] (0.75, -0.25) -- (0.75, 0) -- (0.5, 0.25) -- (0.25, 0.5) -- (0.25, 0.75);
	\btoken{0.5, 0.25}{west}{t^{m}};
	\btoken[2px]{0.25, 0.5}{west}{t^n};
	\end{tikzpicture}$, showing that $\red{W}$ satisfies \eqref{eq:lmod}.
\end{proof}

\begin{eg} \label{ex:truncatedPolynomialModule}
	Let $\g$ be a Lie algebra in $\kkvec$ and $\green{V}$ a $\g$-module. Fix some $a \in \kk$. Let $\red{W} = \green{V} \oplus \green{V}$ be the current $L$-module obtained by applying the construction in Proposition \ref{prop:truncatedPolynomialModule} to the evaluation module of $\green{V}$ at $a$. Explicitly, we have $\damapmultred{1} = \bm \dmodulemult & 0 \\ 0 & \dmodulemult \nm$ and $\damapmultred{t} = \bm a \dmodulemult & 0 \\ \dmodulemult & a\dmodulemult \nm$. Then $\red{W}$ is isomorphic to the current $\g$-module $\green{V} \otimes (\kk[t] / (t - a)^2) \isom (\green{V} \otimes 1) \oplus (\green{V} \otimes (t - a))$. More generally, for each $k \in \N$ the current $L$-module $\red{W} = \green{V}^{\oplus k}$ obtained by applying the construction in Proposition \ref{prop:truncatedPolynomialModule} to the evaluation module of $\green{V}$ at $a$ is isomorphic to
	$$\green{V} \otimes (\kk[t] / (t - a)^k) \isom (\green{V} \otimes 1) \oplus (\green{V} \otimes (t - a)) \oplus \dotsb \oplus (\green{V} \otimes (t - a)^{k - 1}).$$
\end{eg}

The following proposition describes a method to construct extensions between two current $L$-modules.

\begin{prop} \label{prop:currentExtensions}
	Suppose $\CC$ is additive, and let $L$ be a Lie algebra in $\CC$. Let $\green{V}$ and $\red{W}$ be current $L$-modules, and $a \in \kk$. We write $L_a$ for the evaluation module of the adjoint at $a$. Suppose that $\begin{tikzpicture}[anchorbase]
	\draw[bmod] (0, 0) -- (0.25, 0.25);
	\draw[gmod] (0.5, 0) -- (0.25, 0.25);
	\draw[rmod] (0.25, 0.25) -- (0.25, 0.5);
	\atoken{0.25, 0.25}{west}{};
	\end{tikzpicture}$ is a morphism of current $L$-modules, from $L_a \otimes \green{V}$ to $\red{W}$. Then $\blue{U} = \green{V} \oplus \red{W}$ becomes a current $L$-module when equipped with the action morphisms $\damapmultblue{t^n} = \bm \damapmult{t^n} & 0 \\ n a^{n - 1} \begin{tikzpicture}[anchorbase]
	\draw[bmod] (0, 0) -- (0.25, 0.25);
	\draw[gmod] (0.5, 0) -- (0.25, 0.25);
	\draw[rmod] (0.25, 0.25) -- (0.25, 0.5);
	\atoken{0.25, 0.25}{west}{};
	\end{tikzpicture} & \damapmultred{t^n} \nm$.
\end{prop}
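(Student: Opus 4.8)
The plan is to verify directly that the morphisms $\damapmultblue{t^n}$ make $\blue{U} = \green{V}\oplus\red{W}$ into a current $L$-module, i.e.\ that they satisfy \eqref{eq:amaplinearity} and \eqref{eq:amapcompatibility}. Linearity is immediate from the definition, and by the remark following Definition \ref{def:aMapModule} it is enough to check \eqref{eq:amapcompatibility} with $a = t^m$ and $b = t^n$ for $m,n\in\N$ (where $na^{n-1}$ is understood to be $0$ when $n=0$). Write $\phi$ for the given morphism of current $L$-modules $L_a\otimes\green{V}\to\red{W}$, with underlying $\CC$-morphism $L\otimes\green{V}\to\red{W}$. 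Since each $\damapmultblue{t^n}$ is the block lower-triangular morphism $\bm\damapmult{t^n} & 0\\ na^{n-1}\,\phi & \damapmultred{t^n}\nm$, I would check \eqref{eq:amapcompatibility} block-entry by block-entry, just as in the proof of Proposition \ref{prop:truncatedPolynomialModule}. The $(1,1)$- and $(2,2)$-entries are exactly \eqref{eq:amapcompatibility} for the current $L$-modules $\green{V}$ and $\red{W}$ respectively, and the $(1,2)$-entry is $0=0$; so the only thing to prove is the identity of $(2,1)$-entries.

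The key preliminary step is to turn the hypothesis ``$\phi$ is a morphism of current $L$-modules out of $L_a\otimes\green{V}$'' into a usable relation. Recalling that $L_a$ is the evaluation module of the adjoint at $a$, so that its $t^k$-action is $a^k\,\dmult$, and expanding the $t^k$-action on $L_a\otimes\green{V}$ via Proposition \ref{prop:symmetricMonoidalModules}, the morphism condition \eqref{eq:amapMorphism} for $\phi$ becomes, for every $k\in\N$,
\begin{equation*}
\damapmultred{t^k}\circ\left(\id_L\otimes\phi\right)\ =\ a^k\,\phi\circ\left(\dmult\otimes\id_{\green{V}}\right)\ +\ \phi\circ\left(\id_L\otimes\damapmult{t^k}\right)\circ\left(\sigma_{L,L}\otimes\id_{\green{V}}\right),
\end{equation*}
which I will call $(\star)$; in $\CC=\kkvec$ this reads $(x\otimes t^k)\cdot\phi(y\otimes v)=a^k\,\phi([x,y]\otimes v)+\phi\bigl(y\otimes(x\otimes t^k)\cdot v\bigr)$.

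For the $(2,1)$-entries I would expand both sides of \eqref{eq:amapcompatibility} for $\blue{U}$ using the block formulas: the left-hand side yields $ma^{m-1}\,\phi\circ(\id_L\otimes\damapmult{t^n})+na^{n-1}\,\damapmultred{t^m}\circ(\id_L\otimes\phi)$, the first term on the right yields $(m+n)a^{m+n-1}\,\phi\circ(\dmult\otimes\id_{\green{V}})$, and the second term on the right yields $\bigl(na^{n-1}\,\phi\circ(\id_L\otimes\damapmult{t^m})+ma^{m-1}\,\damapmultred{t^n}\circ(\id_L\otimes\phi)\bigr)\circ(\sigma_{L,L}\otimes\id_{\green{V}})$. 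Applying $(\star)$ with $k=m$ to one matching pair of terms, and $(\star)$ with $k=n$ (together with \eqref{eq:skew}, to rewrite $[y,x]=-[x,y]$) to the other pair, the difference of the two sides collapses to $\bigl(ma^{m-1}a^{n}+na^{n-1}a^{m}\bigr)\,\phi\circ(\dmult\otimes\id_{\green{V}})=(m+n)a^{m+n-1}\,\phi\circ(\dmult\otimes\id_{\green{V}})$, which is precisely the $(2,1)$-entry of the right-hand side, completing the verification. I expect the main obstacle to be bookkeeping rather than anything conceptual: one must correctly unpack the tensor-product and scaled-adjoint structures to arrive at $(\star)$, and carefully track the braiding $\sigma_{L,L}$ appearing in the crossed term of \eqref{eq:amapcompatibility} when invoking $(\star)$ with the two $L$-inputs in the opposite order.
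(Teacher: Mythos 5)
Your proposal is correct and takes essentially the same route as the paper: the paper's displayed identity \eqref{eq:extensionMorphismIdentity} is exactly your $(\star)$, unpacked from the assumption that $\phi$ is a morphism of current $L$-modules out of $L_a \otimes \green{V}$, and the paper likewise checks \eqref{eq:amapcompatibility} block-by-block, reducing the $(2,1)$-entry to a combination of $(\star)$, \eqref{eq:skew}, and a crossing/naturality identity, with the diagonal entries handled by the current-module hypotheses on $\green{V}$ and $\red{W}$.
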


\begin{proof}
	Since $\begin{tikzpicture}[anchorbase]
	\draw[bmod] (0, 0) -- (0.25, 0.25);
	\draw[gmod] (0.5, 0) -- (0.25, 0.25);
	\draw[rmod] (0.25, 0.25) -- (0.25, 0.5);
	\atoken{0.25, 0.25}{west}{};
	\end{tikzpicture}$ is a morphism of current $L$-modules, for all $n \in \N$ we have:
	\begin{equation} \label{eq:extensionMorphismIdentity}
	\begin{tikzpicture}[anchorbase]
	\draw[bmod] (0, 0) -- (0, 0.25) -- (0.25, 0.5);
	\draw[bmod] (0.25, 0) -- (0.5, 0.25);
	\draw[gmod] (0.75, 0) -- (0.5, 0.25);
	\draw[rmod] (0.5, 0.25) -- (0.25, 0.5) -- (0.25, 0.75);
	\adot{(0.5, 0.25)};
	\btoken{0.25, 0.5}{west}{t^n};
	\end{tikzpicture}
	\ =\
	a^n \begin{tikzpicture}[anchorbase]
	\draw[bmod] (0, 0) -- (0.25, 0.25);
	\draw[bmod] (0.5, 0) -- (0.25, 0.25) -- (0.5, 0.5);
	\draw[gmod] (0.75, 0) -- (0.75, 0.25) -- (0.5, 0.5);
	\draw[rmod] (0.5, 0.5) -- (0.5, 0.75);
	\adot{(0.5, 0.5)};
	\end{tikzpicture}
	\ +\
	\begin{tikzpicture}[anchorbase]
	\draw[bmod] (0, 0) -- (0.5, 0.5);
	\draw[bmod] (0.25, 0) -- (0, 0.25) -- (0, 0.5) -- (0.25, 0.75);
	\draw[gmod] (0.75, 0) -- (0.75, 0.25) -- (0.5, 0.5) -- (0.25, 0.75);
	\draw[rmod] (0.25, 0.75) -- (0.25, 1);
	\btoken{0.5, 0.5}{west}{t^n};
	\adot{(0.25, 0.75)};
	\end{tikzpicture}.
	\end{equation} 
	Let $m, n \in \N$. Then:
	\begin{commeqn}
		\begin{tikzpicture}[anchorbase]
		\draw[bmod] (0.25, 0) -- (0.5, 0.25);
		\draw[bmod] (0, 0) -- (0, 0.25) -- (0.25, 0.5);
		\draw[blmod] (0.75, 0) -- (0.5, 0.25) -- (0.25, 0.5) -- (0.25, 0.75);
		\btoken{0.5, 0.25}{west}{t^m};
		\btoken{0.25, 0.5}{west}{t^n};
		\end{tikzpicture} &= \bm \damapmult{t^n} & 0 \\ n a^{n - 1} \begin{tikzpicture}[anchorbase]
		\draw[bmod] (0, 0) -- (0.25, 0.25);
		\draw[gmod] (0.5, 0) -- (0.25, 0.25);
		\draw[rmod] (0.25, 0.25) -- (0.25, 0.5);
		\atoken{0.25, 0.25}{west}{};
		\end{tikzpicture} & \damapmultred{t^n} \nm \bm \begin{tikzpicture}[anchorbase]
		\draw[bmod] (-0.25, 0) -- (-0.25, 0.5);
		\draw[bmod] (0, 0) -- (0.25, 0.25);
		\draw[gmod] (0.5, 0) -- (0.25, 0.25) -- (0.25, 0.5);
		\btoken{0.25, 0.25}{west}{t^m};
		\end{tikzpicture} & 0 \\ m a^{m - 1}\ \begin{tikzpicture}[anchorbase]
		\draw[bmod] (-0.25, 0) -- (-0.25, 0.5);
		\draw[bmod] (0, 0) -- (0.25, 0.25);
		\draw[gmod] (0.5, 0) -- (0.25, 0.25);
		\draw[rmod] (0.25, 0.25) -- (0.25, 0.5);
		\atoken{0.25, 0.25}{west}{};
		\end{tikzpicture} & \begin{tikzpicture}[anchorbase]
		\draw[bmod] (-0.25, 0) -- (-0.25, 0.5);
		\draw[bmod] (0, 0) -- (0.25, 0.25);
		\draw[rmod] (0.5, 0) -- (0.25, 0.25) -- (0.25, 0.5);
		\btoken{0.25, 0.25}{west}{t^m};
		\end{tikzpicture} \nm\\
		&= \bm \begin{tikzpicture}[anchorbase]
		\draw[bmod] (0.25, 0) -- (0.5, 0.25);
		\draw[bmod] (0, 0) -- (0, 0.25) -- (0.25, 0.5);
		\draw[gmod] (0.75, 0) -- (0.5, 0.25) -- (0.25, 0.5) -- (0.25, 0.75);
		\btoken{0.5, 0.25}{west}{t^m};
		\btoken{0.25, 0.5}{west}{t^n};
		\end{tikzpicture} & 0 \\ na^{n - 1} \begin{tikzpicture}[anchorbase]
		\draw[bmod] (0, 0) -- (0, 0.25) -- (0.25, 0.5);
		\draw[bmod] (0.25, 0) -- (0.5, 0.25);
		\draw[gmod] (0.75, 0) -- (0.5, 0.25) -- (0.25, 0.5);
		\draw[rmod] (0.25, 0.5) -- (0.25, 0.75);
		\btoken{0.5, 0.25}{west}{t^m};
		\adot{(0.25, 0.5)};
		\end{tikzpicture} + ma^{m - 1} \begin{tikzpicture}[anchorbase]
		\draw[bmod] (0, 0) -- (0, 0.25) -- (0.25, 0.5);
		\draw[bmod] (0.25, 0) -- (0.5, 0.25);
		\draw[gmod] (0.75, 0) -- (0.5, 0.25);
		\draw[rmod] (0.5, 0.25) -- (0.25, 0.5) -- (0.25, 0.75);
		\btoken{0.25, 0.5}{west}{t^n};
		\adot{(0.5, 0.25)};
		\end{tikzpicture} &\ \begin{tikzpicture}[anchorbase]
		\draw[bmod] (0.25, 0) -- (0.5, 0.25);
		\draw[bmod] (0, 0) -- (0, 0.25) -- (0.25, 0.5);
		\draw[rmod] (0.75, 0) -- (0.5, 0.25) -- (0.25, 0.5) -- (0.25, 0.75);
		\btoken{0.5, 0.25}{west}{t^m};
		\btoken{0.25, 0.5}{west}{t^n};
		\end{tikzpicture}\ \nm,\\
		\begin{tikzpicture}[anchorbase]
		\draw[bmod] (0.5, 0) -- (0.25, 0.25);
		\draw[bmod] (0, 0) -- (0.25, 0.25) -- (0.5, 0.5);
		\draw[blmod] (0.75, 0) -- (0.75, 0.25) -- (0.5, 0.5) -- (0.5, 0.75);
		\btoken{0.5, 0.5}{west}{t^{m + n}};
		\end{tikzpicture} &= \bm \begin{tikzpicture}[anchorbase]
		\draw[bmod] (0, 0) -- (0.25, 0.25);
		\draw[bmod] (0.5, 0) -- (0.25, 0.25) -- (0.5, 0.5);
		\draw[gmod] (0.75, 0) -- (0.75, 0.25) -- (0.5, 0.5) -- (0.5, 0.75);
		\btoken{0.5, 0.5}{west}{t^{m + n}};
		\end{tikzpicture} & 0 \\
		(m + n) a^{m + n - 1}\begin{tikzpicture}[anchorbase]
		\draw[bmod] (0, 0) -- (0.25, 0.25);
		\draw[bmod] (0.5, 0) -- (0.25, 0.25) -- (0.5, 0.5);
		\draw[gmod] (0.75, 0) -- (0.75, 0.25) -- (0.5, 0.5);
		\draw[rmod] (0.5, 0.5) -- (0.5, 0.75);
		\atoken{0.5, 0.5}{west}{};
		\end{tikzpicture} &\ \begin{tikzpicture}[anchorbase]
		\draw[bmod] (0, 0) -- (0.25, 0.25);
		\draw[bmod] (0.5, 0) -- (0.25, 0.25) -- (0.5, 0.5);
		\draw[rmod] (0.75, 0) -- (0.75, 0.25) -- (0.5, 0.5) -- (0.5, 0.75);
		\btoken{0.5, 0.5}{west}{t^{m + n}};
		\end{tikzpicture}\ \nm,\\
		\begin{tikzpicture}[anchorbase]
		\draw[bmod] (0, -0.25) -- (0.25, 0) -- (0.5, 0.25);
		\draw[bmod] (0.25, -0.25) -- (0, 0) -- (0, 0.25) -- (0.25, 0.5);
		\draw[blmod] (0.75, -0.25) -- (0.75, 0) -- (0.5, 0.25) -- (0.25, 0.5) -- (0.25, 0.75);
		\btoken{0.5, 0.25}{west}{t^n};
		\btoken{0.25, 0.5}{west}{t^m};
		\end{tikzpicture} &= \bm \begin{tikzpicture}[anchorbase]
		\draw[bmod] (0, -0.25) -- (0.25, 0) -- (0.5, 0.25);
		\draw[bmod] (0.25, -0.25) -- (0, 0) -- (0, 0.25) -- (0.25, 0.5);
		\draw[gmod] (0.75, -0.25) -- (0.75, 0) -- (0.5, 0.25) -- (0.25, 0.5) -- (0.25, 0.75);
		\btoken{0.5, 0.25}{west}{t^n};
		\btoken{0.25, 0.5}{west}{t^m};
		\end{tikzpicture} & 0 \\ ma^{m - 1} \begin{tikzpicture}[anchorbase]
		\draw[bmod] (0.25, -0.25) -- (0, 0) -- (0, 0.25) -- (0.25, 0.5);
		\draw[bmod] (0, -0.25) -- (0.25, 0) -- (0.5, 0.25);
		\draw[gmod] (0.75, -0.25) -- (0.75, 0) -- (0.5, 0.25) -- (0.25, 0.5);
		\draw[rmod] (0.25, 0.5) -- (0.25, 0.75);
		\btoken{0.5, 0.25}{west}{t^n};
		\adot{(0.25, 0.5)};
		\end{tikzpicture} + na^{n - 1} \begin{tikzpicture}[anchorbase]
		\draw[bmod] (0.25, -0.25) -- (0, 0) -- (0, 0.25) -- (0.25, 0.5);
		\draw[bmod] (0, -0.25) -- (0.25, 0) -- (0.5, 0.25);
		\draw[gmod] (0.75, -0.25) -- (0.75, 0) -- (0.5, 0.25);
		\draw[rmod] (0.5, 0.25) -- (0.25, 0.5) -- (0.25, 0.75);
		\btoken{0.25, 0.5}{west}{t^m};
		\adot{(0.5, 0.25)};
		\end{tikzpicture} &\ \begin{tikzpicture}[anchorbase]
		\draw[bmod] (0, -0.25) -- (0.25, 0) -- (0.5, 0.25);
		\draw[bmod] (0.25, -0.25) -- (0, 0) -- (0, 0.25) -- (0.25, 0.5);
		\draw[rmod] (0.75, -0.25) -- (0.75, 0) -- (0.5, 0.25) -- (0.25, 0.5) -- (0.25, 0.75);
		\btoken{0.5, 0.25}{west}{t^n};
		\btoken{0.25, 0.5}{west}{t^m};
		\end{tikzpicture}\ \nm,
	\end{commeqn}
	and we want to show $\begin{tikzpicture}[anchorbase]
	\draw[bmod] (0.25, 0) -- (0.5, 0.25);
	\draw[bmod] (0, 0) -- (0, 0.25) -- (0.25, 0.5);
	\draw[blmod] (0.75, 0) -- (0.5, 0.25) -- (0.25, 0.5) -- (0.25, 0.75);
	\btoken{0.5, 0.25}{west}{t^m};
	\btoken{0.25, 0.5}{west}{t^n};
	\end{tikzpicture} = \begin{tikzpicture}[anchorbase]
	\draw[bmod] (0.5, 0) -- (0.25, 0.25);
	\draw[bmod] (0, 0) -- (0.25, 0.25) -- (0.5, 0.5);
	\draw[blmod] (0.75, 0) -- (0.75, 0.25) -- (0.5, 0.5) -- (0.5, 0.75);
	\btoken{0.5, 0.5}{west}{t^{m + n}};
	\end{tikzpicture} + \begin{tikzpicture}[anchorbase]
	\draw[bmod] (0, -0.25) -- (0.25, 0) -- (0.5, 0.25);
	\draw[bmod] (0.25, -0.25) -- (0, 0) -- (0, 0.25) -- (0.25, 0.5);
	\draw[blmod] (0.75, -0.25) -- (0.75, 0) -- (0.5, 0.25) -- (0.25, 0.5) -- (0.25, 0.75);
	\btoken{0.5, 0.25}{west}{t^n};
	\btoken{0.25, 0.5}{west}{t^m};
	\end{tikzpicture}$. The desired equality for the entries on the main diagonal follows immediately from the fact that $\green{V}$ and $\red{W}$ are current $L$-modules. The equality for the bottom left entry follows after noting that
	\begin{commeqn}
		(m + n) a^{m + n - 1}\begin{tikzpicture}[anchorbase]
		\draw[bmod] (0, 0) -- (0.25, 0.25);
		\draw[bmod] (0.5, 0) -- (0.25, 0.25) -- (0.5, 0.5);
		\draw[gmod] (0.75, 0) -- (0.75, 0.25) -- (0.5, 0.5);
		\draw[rmod] (0.5, 0.5) -- (0.5, 0.75);
		\atoken{0.5, 0.5}{west}{};
		\end{tikzpicture} &= m a^{m + n - 1}\begin{tikzpicture}[anchorbase]
		\draw[bmod] (0, 0) -- (0.25, 0.25);
		\draw[bmod] (0.5, 0) -- (0.25, 0.25) -- (0.5, 0.5);
		\draw[gmod] (0.75, 0) -- (0.75, 0.25) -- (0.5, 0.5);
		\draw[rmod] (0.5, 0.5) -- (0.5, 0.75);
		\atoken{0.5, 0.5}{west}{};
		\end{tikzpicture} - n a^{m + n - 1}\begin{tikzpicture}[anchorbase]
		\draw[bmod] (0.5, -0.5) -- (0, 0) -- (0.25, 0.25);
		\draw[bmod] (0, -0.5) -- (0.5, 0) -- (0.25, 0.25) -- (0.5, 0.5);
		\draw[gmod] (0.75, -0.5) -- (0.75, 0) -- (0.75, 0.25) -- (0.5, 0.5);
		\draw[rmod] (0.5, 0.5) -- (0.5, 0.75);
		\atoken{0.5, 0.5}{west}{};
		\end{tikzpicture} &\commentary{Using \eqref{eq:skew}}\\
		&= ma^{m - 1}\left( \begin{tikzpicture}[anchorbase]
		\draw[bmod] (0, 0) -- (0, 0.25) -- (0.25, 0.5);
		\draw[bmod] (0.25, 0) -- (0.5, 0.25);
		\draw[gmod] (0.75, 0) -- (0.5, 0.25);
		\draw[rmod] (0.5, 0.25) -- (0.25, 0.5) -- (0.25, 0.75);
		\adot{(0.5, 0.25)};
		\btoken{0.25, 0.5}{west}{t^n};
		\end{tikzpicture} 
		- 
		\begin{tikzpicture}[anchorbase]
		\draw[bmod] (0, 0) -- (0.5, 0.5);
		\draw[bmod] (0.25, 0) -- (0, 0.25) -- (0, 0.5) -- (0.25, 0.75);
		\draw[gmod] (0.75, 0) -- (0.75, 0.25) -- (0.5, 0.5) -- (0.25, 0.75);
		\draw[rmod] (0.25, 0.75) -- (0.25, 1);
		\btoken{0.5, 0.5}{west}{t^n};
		\adot{(0.25, 0.75)};
		\end{tikzpicture} \right)
		-
		na^{n - 1}\left(\begin{tikzpicture}[anchorbase]
		\draw[bmod] (0.25, -0.25) -- (0, 0) -- (0, 0.25) -- (0.25, 0.5);
		\draw[bmod] (0, -0.25) -- (0.25, 0) -- (0.5, 0.25);
		\draw[gmod] (0.75, -0.25) -- (0.75, 0) -- (0.5, 0.25);
		\draw[rmod] (0.5, 0.25) -- (0.25, 0.5) -- (0.25, 0.75);
		\adot{(0.5, 0.25)};
		\btoken{0.25, 0.5}{west}{t^m};
		\end{tikzpicture} 
		- 
		\begin{tikzpicture}[anchorbase]
		\draw[bmod] (0.25, -0.25) -- (0, 0) -- (0.5, 0.5);
		\draw[bmod] (0, -0.25) -- (0.25, 0) -- (0, 0.25) -- (0, 0.5) -- (0.25, 0.75);
		\draw[gmod] (0.75, -0.25) -- (0.75, 0.25) -- (0.5, 0.5) -- (0.25, 0.75);
		\draw[rmod] (0.25, 0.75) -- (0.25, 1);
		\btoken{0.5, 0.5}{west}{t^m};
		\adot{(0.25, 0.75)};
		\end{tikzpicture}\right) &\commentary{Using \eqref{eq:extensionMorphismIdentity}}\\
		&= ma^{m - 1}\left( \begin{tikzpicture}[anchorbase]
		\draw[bmod] (0, 0) -- (0, 0.25) -- (0.25, 0.5);
		\draw[bmod] (0.25, 0) -- (0.5, 0.25);
		\draw[gmod] (0.75, 0) -- (0.5, 0.25);
		\draw[rmod] (0.5, 0.25) -- (0.25, 0.5) -- (0.25, 0.75);
		\adot{(0.5, 0.25)};
		\btoken{0.25, 0.5}{west}{t^n};
		\end{tikzpicture} 
		- 
		\begin{tikzpicture}[anchorbase]
		\draw[bmod] (0, 0) -- (0.5, 0.5);
		\draw[bmod] (0.25, 0) -- (0, 0.25) -- (0, 0.5) -- (0.25, 0.75);
		\draw[gmod] (0.75, 0) -- (0.75, 0.25) -- (0.5, 0.5) -- (0.25, 0.75);
		\draw[rmod] (0.25, 0.75) -- (0.25, 1);
		\btoken{0.5, 0.5}{west}{t^n};
		\adot{(0.25, 0.75)};
		\end{tikzpicture} \right)
		-
		na^{n - 1}\left(\begin{tikzpicture}[anchorbase]
		\draw[bmod] (0.25, -0.25) -- (0, 0) -- (0, 0.25) -- (0.25, 0.5);
		\draw[bmod] (0, -0.25) -- (0.25, 0) -- (0.5, 0.25);
		\draw[gmod] (0.75, -0.25) -- (0.75, 0) -- (0.5, 0.25);
		\draw[rmod] (0.5, 0.25) -- (0.25, 0.5) -- (0.25, 0.75);
		\adot{(0.5, 0.25)};
		\btoken{0.25, 0.5}{west}{t^m};
		\end{tikzpicture} 
		- 
		\begin{tikzpicture}[anchorbase]
		\draw[bmod] (0.25, 0.25) -- (0.5, 0.5);
		\draw[bmod] (0, 0.25) -- (0, 0.5) -- (0.25, 0.75);
		\draw[gmod] (0.75, 0.25) -- (0.5, 0.5) -- (0.25, 0.75);
		\draw[rmod] (0.25, 0.75) -- (0.25, 1);
		\btoken{0.5, 0.5}{west}{t^m};
		\adot{(0.25, 0.75)};
		\end{tikzpicture}\right). &\commentary{Using \eqref{eq:crossingIdentities}}
	\end{commeqn}
\end{proof}

\begin{egs} \label{ex:currentExtensions}
	Using the notation from Proposition \ref{prop:currentExtensions}, picking $\begin{tikzpicture}[anchorbase]
	\draw[bmod] (0, 0) -- (0.25, 0.25);
	\draw[gmod] (0.5, 0) -- (0.25, 0.25);
	\draw[rmod] (0.25, 0.25) -- (0.25, 0.5);
	\atoken{0.25, 0.25}{west}{};
	\end{tikzpicture} = 0$ yields the trivial extension of $\green{V}$ by $\red{W}$, i.e.\ $\blue{U} = \green{V} \oplus \red{W}$ not only as $L$-modules, but also as current $L$-modules.
	
	Taking $\green{V} = \red{W}$ to be an evaluation module at $a \in \kk$ and setting $\begin{tikzpicture}[anchorbase]
	\draw[bmod] (0, 0) -- (0.25, 0.25);
	\draw[gmod] (0.5, 0) -- (0.25, 0.25);
	\draw[rmod] (0.25, 0.25) -- (0.25, 0.5);
	\atoken{0.25, 0.25}{west}{};
	\end{tikzpicture} = \dmodulemult \colon L_a \otimes \green{V} \to \green{V}$, the resulting current $L$-module is the same as the one obtained by applying Proposition \ref{prop:truncatedPolynomialModule} to $\green{V}$ with $k = 2$. As mentioned in Example \ref{ex:truncatedPolynomialModule}, in the case $\CC = \kkvec$ this module is isomorphic to the truncated polynomial module $\green{V} \otimes (\kk[t] / (t - a)^2)$.
	
	Taking $\green{V} = \one_0$ (the trivial module evaluated at 0), $\red{W} = L_a$ for some $a \in \kk$, and $\begin{tikzpicture}[anchorbase]
	\draw[bmod] (0, 0) -- (0.25, 0.25);
	\draw[gmod] (0.5, 0) -- (0.25, 0.25);
	\draw[rmod] (0.25, 0.25) -- (0.25, 0.5);
	\atoken{0.25, 0.25}{west}{};
	\end{tikzpicture}$ to be the unitor map $L \otimes \one \to L$, one obtains an extension of $\one_0$ by $L_a$. In the case $\CC = \kkvec$, $a = 0$, this is the current module discussed at the end of \cite[\S2.6]{chariGreenstein}.
\end{egs}

\section{Lie algebras in diagrammatic categories} \label{s:diagrammaticCategories}

In this section, we will apply the content from the previous section to several diagrammatic categories: the oriented and unoriented Brauer categories $\OB$ and $\B$, their Frobenius algebra generalizations $\OB(A)$ and $\B(A)$, and the diagrammatic categories for the exceptional types $F_4$ and $G_2$. We will describe how to construct objects corresponding to the Lie algebra associated to each category (for instance, $\gl_n$ for $\OB$), its natural module, and the dual to that module. Additionally, we will use $\OB$ to define a candidate interpolating category $\Curr(\OB)$ for current $\gl_n$-modules, and outline what would need to be shown to prove that this category does indeed interpolate the categories $(\gl_n \otimes \kk[t])\dashmod$.

The oriented Brauer category $\OB$ was first defined and studied by Brundan, Comes, Nash, and Reynolds in \cite{Brundan_2017}. Its endomorphism algebras are the \emph{walled Brauer algebras} introduced independently by Turaev and Koike in \cite{turaevWalledBrauerAlgebras} and \cite{koikeWalledBrauerAlgebras}, respectively. As outlined below, $\OB$ is an interpolating category for the categories of $\gl_n$-modules, $n \in \N$. Note that the following definition of $\OB$ is not exactly the same as the one given in \cite{Brundan_2017}, and instead matches \cite[Def.~4.1]{orientedFrobeniusBrauer} (taking $A = \kk$).

\begin{defin} \label{def:orientedBrauerCategory}
	The \emph{oriented Brauer category}, denoted $\OB$, is the strict $\kk$-linear monoidal category with two generating objects, written as $\goup$ and $\godown$, generating morphisms $\dcrossup, \dcupdownup, \dcupupdown, \dcapupdown$ and $\dcapdownup,$ and the following relations:
	\begin{equation} \label{rel:orientedBasics}
	%braiding
	\begin{tikzpicture}[anchorbase]
	\draw[->, thick] (0, 0) -- (0.8, 0.8);
	\draw[->, thick] (0.4, 0) .. controls (0, 0.4) .. (0.4, 0.8);
	\draw[->, thick] (0.8, 0) -- (0, 0.8);
	\end{tikzpicture}
	\ =\
	\begin{tikzpicture}[anchorbase]
	\draw[->, thick] (0, 0) -- (0.8, 0.8);
	\draw[->, thick] (0.4, 0) .. controls (0.8, 0.4) .. (0.4, 0.8);
	\draw[->, thick] (0.8, 0) -- (0, 0.8);
	\end{tikzpicture}\ , \quad
	%double crossing
	\begin{tikzpicture}[anchorbase]
	\draw[->, thick] (0, 0) -- (0, 0.2) to[out=90,in=270] (0.4, 0.6) to[out=90,in=270] (0, 1) -- (0, 1.2);
	\draw[->, thick] (0.4, 0) -- (0.4, 0.2) to[out=90,in=270] (0, 0.6) to[out=90,in=270] (0.4, 1) -- (0.4, 1.2);
	\end{tikzpicture}
	\ =\
	\begin{tikzpicture}[anchorbase]
	\draw[->, thick] (0, 0) -- (0, 1.2);
	\draw[->, thick] (0.4, 0) -- (0.4, 1.2);
	\end{tikzpicture}\ , \quad
	\begin{tikzpicture}[anchorbase]
	\draw[->, thick] (0, 0) -- (0, 0.2) to[out=90,in=270] (0.4, 0.6) to[out=90,in=270] (0, 1) -- (0, 1.2);
	\draw[<-, thick] (0.4, 0) -- (0.4, 0.2) to[out=90,in=270] (0, 0.6) to[out=90,in=270] (0.4, 1) -- (0.4, 1.2);
	\end{tikzpicture}
	\ =\
	\begin{tikzpicture}[anchorbase]
	\draw[->, thick] (0, 0) -- (0, 1.2);
	\draw[<-, thick] (0.4, 0) -- (0.4, 1.2);
	\end{tikzpicture}\ , \quad
	\begin{tikzpicture}[anchorbase]
	\draw[<-, thick] (0, 0) -- (0, 0.2) to[out=90,in=270] (0.4, 0.6) to[out=90,in=270] (0, 1) -- (0, 1.2);
	\draw[->, thick] (0.4, 0) -- (0.4, 0.2) to[out=90,in=270] (0, 0.6) to[out=90,in=270] (0.4, 1) -- (0.4, 1.2);
	\end{tikzpicture}
	\ =\
	\begin{tikzpicture}[anchorbase]
	\draw[<-, thick] (0, 0) -- (0, 1.2);
	\draw[->, thick] (0.4, 0) -- (0.4, 1.2);
	\end{tikzpicture}\ , \quad
	\end{equation}
	\begin{equation} \label{rel:OBZigzagCurl}
	%zigzag
	\begin{tikzpicture}[anchorbase]
	\draw[->, thick] (0,0) -- (0,0.6) arc(180:0:0.2) -- (0.4,0.4) arc(180:360:0.2) -- (0.8,1);
	\end{tikzpicture}
	\ =\
	\begin{tikzpicture}[anchorbase]
	\draw[->, thick] (0,0) -- (0,1);
	\end{tikzpicture}\ , \quad
	\begin{tikzpicture}[anchorbase]
	\draw[->, thick] (0, 1) -- (0, 0.4) arc(180:360:0.2) -- (0.4, 0.6) arc(180:0:0.2) -- (0.8, 0);
	\end{tikzpicture}
	\ =\
	\begin{tikzpicture}[anchorbase]
	\draw[<-, thick] (0,0) -- (0,1);
	\end{tikzpicture}\ ,
	%curl
	\begin{tikzpicture}[centerzero]
	\draw[->, thick] (0,-0.4) to[out=up,in=180] (0.25,0.15) to[out=0,in=up] (0.4,0) to[out=down,in=0] (0.25,-0.15) to[out=180,in=down] (0,0.4);
	\end{tikzpicture}\ = \
	\begin{tikzpicture}[anchorbase]
	\draw[->, thick] (0, 0) -- (0, 0.6);
	\end{tikzpicture}\ = \
	\begin{tikzpicture}[centerzero]
	\draw[->, thick] (0,-0.4) to[out=up,in=0] (-0.25,0.15) to[out=180,in=up] (-0.4,0) to[out=down,in=180] (-0.25,-0.15) to[out=0,in=down] (0,0.4);
	\end{tikzpicture}\ .
	\end{equation}
	The sideways crossings appearing above are defined by $\dcrossright = \begin{tikzpicture}[anchorbase]
	\draw[->, thick] (0, 1) -- (0, 0.4) arc(180:360:0.2) -- (0.4, 0.6) arc(180:0:0.2) -- (0.8, 0);
	\draw[->, thick] (0.6, 0) to[out=90, in=270] (0.2, 1);
	\end{tikzpicture}$ and $\dcrossleft = \begin{tikzpicture}[anchorbase]
	\draw[<-, thick] (0,0) -- (0,0.6) arc(180:0:0.2) -- (0.4,0.4) arc(180:360:0.2) -- (0.8,1);
	\draw[->, thick] (0.2, 0) to[out=90, in=270] (0.6, 1);
	\end{tikzpicture}$\ .
	
	For each $\delta \in \kk$, we define the \emph{specialized oriented Brauer category} $\OB(\delta)$ to be the category obtained from $\OB$ by imposing one additional relation:
	\begin{equation}
	\begin{tikzpicture}[anchorbase]
	\draw[<-, thick] (0, 0) arc(0:360:0.25);
	\end{tikzpicture} = \delta.
	\end{equation}
\end{defin}

\begin{lem} \label{lem:orientedBrauerFree}
	The oriented Brauer category is the free $\kk$-linear symmetric monoidal category generated by an object $\goup$ and its dual $\godown$. That is, if $\CC$ is a $\kk$-linear symmetric monoidal category and $X \in \ob(\CC)$ is an object with dual $X^* \in \ob(\CC)$, then there is a unique monoidal functor $I \colon \OB \to \CC$ such that 
	\begin{equation} \label{eq:orientedBrauerFree}
	I(\goup) = X, \quad\quad I(\godown) = X^*, \quad\quad I\left(\dcrossup\right) = \begin{tikzpicture}[anchorbase]
	\draw[bmod] (0, 0) -- (0.5, 0.5);
	\draw[bmod] (0.5, 0) -- (0, 0.5);
	\node at (0, -0.25) {$X$};
	\node at (0.5, -0.25) {$X$};
	\node at (0, 0.75) {$X$};
	\node at (0.5, 0.75) {$X$};
	\end{tikzpicture}, \quad \text{and} \quad I\left(\dcapupdown\right) = \begin{tikzpicture}[anchorbase]
	\draw[bmod] (0, 0) -- (0, 0.2) arc(180:0:0.25) -- (0.5, 0);
	\node at (0, -0.25) {$X$};
	\node at (0.5, -0.25) {$X^*$};
	\end{tikzpicture}.
	\end{equation}
	This functor also satisfies 
	\begin{equation}\label{eq:orientedBrauerFree2}
	I\left(\dcapdownup \right) = \begin{tikzpicture}[anchorbase]
	\draw[bmod] (0, 0) -- (0, 0.2) arc(180:0:0.25) -- (0.5, 0);
	\node at (0, -0.25) {$X^*$};
	\node at (0.5, -0.25) {$X$};
	\end{tikzpicture}, \quad\quad I\left(\dcupupdown\right) = \begin{tikzpicture}[anchorbase, yscale=-1]
	\draw[bmod] (0, 0) -- (0, 0.2) arc(180:0:0.25) -- (0.5, 0);
	\node at (0, -0.25) {$X$};
	\node at (0.5, -0.25) {$X^*$};
	\end{tikzpicture}, \quad \text{and} \quad I\left(\dcupdownup \right) = \begin{tikzpicture}[anchorbase, xscale=-1, yscale=-1]
	\draw[bmod] (0, 0) -- (0, 0.2) arc(180:0:0.25) -- (0.5, 0);
	\node at (0, -0.25) {$X$};
	\node at (0.5, -0.25) {$X^*$};
	\end{tikzpicture}.
	\end{equation} We call such a functor an \emph{(oriented) incarnation functor}.
\end{lem}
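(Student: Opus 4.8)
The plan is to prove existence and uniqueness of $I$ separately, with the existence half carrying essentially all the content.

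\emph{Existence.} Since $\OB$ is the $\kk$-linear symmetric monoidal category presented by the generating objects $\goup,\godown$, the generating morphisms $\dcrossup, \dcupdownup, \dcupupdown, \dcapupdown, \dcapdownup$, and the relations \eqref{rel:orientedBasics}--\eqref{rel:OBZigzagCurl}, I would build $I$ by first setting $I(\goup)=X$, $I(\godown)=X^*$ and extending $\otimes$-multiplicatively to all tensor words in $\goup,\godown$; then sending $\dcrossup$ to the symmetric braiding $\sigma_{X,X}$, sending $\dcapupdown$ and $\dcupdownup$ to the chosen evaluation and coevaluation witnessing that $X^*$ is a right dual of $X$, and sending $\dcapdownup$ and $\dcupupdown$ to the ``left dual'' evaluation and coevaluation obtained from the latter two by composing with $\sigma$, as dictated by \eqref{eq:delooping}; and finally extending $\kk$-linearly. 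The only thing to verify is that $I$ respects each defining relation of $\OB$, since well-definedness of a $\kk$-linear monoidal functor out of a presented category amounts to exactly that. The relations in \eqref{rel:orientedBasics} translate to the axioms of a symmetric braiding in $\CC$ (involutivity of $\sigma$ on $X\otimes X$ and on the mixed tensor products, and naturality, i.e.\ a strand sliding through a crossing), and the relations in \eqref{rel:OBZigzagCurl} translate to the zigzag identities \eqref{eq:zigzag} for the dual pairs $(X,X^*)$ and $(X^*,X)$ together with the two curl identities; after unpacking the definitions of the sideways crossings $\dcrossright,\dcrossleft$, the curl identities reduce to the zigzag identities, naturality of $\sigma$, and \eqref{eq:delooping}. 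All of these hold in an arbitrary $\kk$-linear symmetric monoidal category carrying a dual pair — recall the paper's standing convention that all cups and caps for dual pairs satisfy \eqref{eq:delooping} — so $I$ is a well-defined $\kk$-linear monoidal functor, and it satisfies \eqref{eq:orientedBrauerFree} and \eqref{eq:orientedBrauerFree2} by construction.

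\emph{Uniqueness.} Suppose $J\colon\OB\to\CC$ is a monoidal functor satisfying \eqref{eq:orientedBrauerFree}. Being $\kk$-linear and monoidal, $J$ is determined by its values on the generating objects (which are fixed) and on the five generating morphisms, and it agrees with $I$ on $\dcrossup$ and $\dcapupdown$ by hypothesis, so it suffices to show that $J(\dcupdownup)$, $J(\dcupupdown)$, and $J(\dcapdownup)$ are forced. Since in $\OB$ the pair $\dcapupdown,\dcupdownup$ satisfies \eqref{eq:zigzag} for $(\goup,\godown)$, applying $J$ shows that $J(\dcupdownup)$ and $J(\dcapupdown)=I(\dcapupdown)$ satisfy \eqref{eq:zigzag} for $(X,X^*)$; as the coevaluation of a dual pair is uniquely determined by the object, its dual, and the evaluation (two coevaluations satisfying the zigzag identities against a common evaluation coincide, by the usual adjunction argument), $J(\dcupdownup)=I(\dcupdownup)$. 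Finally, by \eqref{eq:delooping}, which holds in $\OB$, the generators $\dcapdownup$ and $\dcupupdown$ equal explicit composites of $\dcrossup$, $\dcapupdown$, and $\dcupdownup$, so $J$ is determined on them as well; hence $J=I$ on all generators, and therefore $J=I$.

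\emph{The main obstacle.} The bulk of the work, and the only place any real care is needed, is the existence step: one must read off the orientation of each strand in \eqref{rel:orientedBasics}--\eqref{rel:OBZigzagCurl}, match it with the appropriate composite of $\sigma$, the chosen (co)evaluations, and their delooped counterparts in $\CC$, and verify the resulting identity, the substantive cases being the mixed-orientation crossing relations and the curl relations, both of which boil down to the fact that in a symmetric monoidal category a right dual is canonically a two-sided dual — precisely the delooping construction already in force. One could instead shortcut this by citing the universal property of $\OB$ established in \cite{Brundan_2017} and checking that the presentation of Definition \ref{def:orientedBrauerCategory} is equivalent to theirs, but the direct verification is not long.
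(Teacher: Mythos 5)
Your proof is correct and complete, but it takes a more self-contained route than the paper does. Where you directly verify that the defining relations of $\OB$ (Definition \ref{def:orientedBrauerCategory}) are sent to true identities in $\CC$ — the braid and involutivity relations to the symmetric braiding axioms, and the zigzag and curl relations to duality plus \eqref{eq:delooping} after unpacking the sideways crossings — the paper simply cites \cite[Cor.~3.1]{Brundan_2017} for the existence of $I$ and then asserts that uniqueness and \eqref{eq:orientedBrauerFree2} ``follow straightforwardly'' from the defining relations, \eqref{eq:delooping}, and symmetric monoidality. You explicitly acknowledge the citation as the available shortcut, which is what the paper actually does. Your uniqueness argument (coevaluation is determined by evaluation via the adjunction, then $\dcapdownup$ and $\dcupupdown$ are forced by delooping) is essentially the argument the paper is gesturing at, just written out. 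The trade-off: the paper's citation is faster but implicitly requires reconciling its presentation of $\OB$ (which it notes matches \cite[Def.~4.1]{orientedFrobeniusBrauer}, not \cite{Brundan_2017}'s original form) with the one in Brundan–Comes–Nash–Reynolds, whereas your direct check is longer but uses only the relations actually written in Definition \ref{def:orientedBrauerCategory}. One small point worth flagging for rigor: in the existence half, checking the relations on generating morphisms establishes a well-defined strict monoidal $\kk$-linear functor out of the free presentation, and strictness of $\OB$ plus coherence for $\CC$ then yields the monoidal functor; this is standard but is the step your phrase ``well-definedness amounts to exactly that'' is suppressing.
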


\begin{proof}
	The existence of a functor $I$ satisfying \eqref{eq:orientedBrauerFree} is proved in \cite[Cor.~3.1]{Brundan_2017}. Uniqueness and the relations in \eqref{eq:orientedBrauerFree2} follow straightforwardly from the defining relations of $\OB$, the assumption that the caps and cups for $X$ and $X^*$ satisfy \eqref{eq:delooping} (see the comment below those identities), and the fact that $I$ is a symmetric monoidal functor. 
\end{proof}

For each $n \in \N$, we write $I_n$ for the oriented incarnation functor obtained by picking $\CC$ to be $\gl_n\dashmod$ and $X$ to be the natural $\gl_n$-module. We have $I_n \left(\begin{tikzpicture}[anchorbase]
\draw[<-, thick] (0, 0) arc(0:360:0.25);
\end{tikzpicture} \right) = \dim(X) = n$, and hence we may regard $I_n$ as a functor from $\OB(n)$ to $\gl_n\dashmod$. As shown in \cite[\S 8.3]{comesWilson}, each $I_n$ is full (note that the category $\underline{\text{Rep}}(GL_{m - n})$ discussed in that paper is isomorphic to $\OB(m - n)$, and their functor $F_{n \mid 0}$ corresponds to what we have called $I_n$). Thus $\OB$ is an interpolating category for the family of categories $\{\gl_n\dashmod : n \in \N \}$. In the following we use $\OB$ to construct a candidate interpolating category, which we denote $\Curr(\OB)$, for the family of current $\gl_n$-module categories, i.e.\ $\{(\gl_n \otimes \kk[t])\dashmod : n \in \N \}$. Most of the discussion applies to $(\gl_n \otimes A)$-modules more generally, but we focus on the case $A = \kk[t]$ for concreteness.

\begin{rem} \label{rem:tensorOmission}
	When working with diagrammatically-defined categories like $\OB$, it is conventional to omit the symbol $\otimes$ in tensor products of objects. For instance, we write $\goup \godown$ instead of $\goup \otimes \godown$ in what follows.
\end{rem}

Throughout the rest of this subsection, let $L$ denote the Lie algebra $\left(\goup \godown, \begin{tikzpicture}[anchorbase]
\draw[->, thick] (0, 0) to[out=90, in=270] (0.25, 0.5) -- (0.25, 0.75);
\draw[<-, thick] (0.25, 0) -- (0.25, 0.2) arc(180:0:0.125) -- (0.5, 0);
\draw[<-, thick] (0.75, 0) to[out=90, in=270] (0.5, 0.5) -- (0.5, 0.75);
\end{tikzpicture}
\ -\
\begin{tikzpicture}[anchorbase]
\draw[->, thick] (0.5, -0.75) -- (0.5, -0.5) to[out=90, in=270] (0, 0) to[out=90, in=270] (0.25, 0.5) -- (0.25, 0.75);
\draw[<-, thick] (0.75, -0.75) -- (0.75, -0.5) to[out=90, in=270] (0.5, 0) arc(0:180:0.125) to[out=270, in=90] (0, -0.5) -- (0, -0.75);
\draw[<-, thick] (0.25, -0.75) -- (0.25, -0.5) to[out=90, in=270] (0.75, 0) to[out=90, in=270] (0.5, 0.5) -- (0.5, 0.75);
\end{tikzpicture} \right)$ in $\OB$ (see Lemma \ref{lem:orientedBrauerLieObject}). For each $n \in \N$, $I_n(L)$ is isomorphic to $\gl_n$, equipped with the usual Lie bracket (see Example \ref{ex:orientedBrauerLieObject}).

\begin{rem} \label{rem:doubleModule}
	Note that $I_n(L)$ is a Lie algebra in $\gl_n\dashmod$. Precisely, the underlying object is the adjoint $\gl_n$-module, and it is equipped with the usual Lie bracket for $\gl_n$. The fact that $I_n(L)$ is itself a $\gl_n$-module does not play an important role in the context of this paper.
	
	\details{The same applies to the $I_n$-images of the modules $\goup$ and $\godown$. They're the natural module and its dual, respectively, each equipped with the same $\gl_n$-action twice; once as the $\gl_n$-action associated to any given object of $\gl_n\dashmod$, and once as the action of the Lie algebra $I_n(L)$.}
\end{rem}
Recall that $(L, \kk[t])\dashmod$ is a symmetric monoidal category (this is a special case of Proposition \ref{prop:symmetricMonoidalModules}). Hence, by Lemmas \ref{lem:orientedBrauerLieObject} and \ref{lem:orientedBrauerFree}, there is an incarnation functor $I \colon \OB \to (L, \kk[t])\dashmod$ sending $\goup \in \ob(\OB)$ to the natural $L$-module $\left(\goup,\ \begin{tikzpicture}[anchorbase]
\draw[->, thick] (0, 0) -- (0, 0.75);
\draw[->, thick] (0.6, 0) -- (0.6, 0.2) arc(0:180:0.2) -- (0.2, 0);
\end{tikzpicture} \right)$ and $\godown \in \ob(\OB)$ to the dual $L$-module $\left(\godown,\ -\ \begin{tikzpicture}[anchorbase]
\draw[->, thick] (0, 0) -- (0, 0.25) arc(180:0:0.25) -- (0.5, 0);
\draw[<-, thick] (0.25, 0) -- (0.25, 0.25) to[out=90, in=270] (0.5, 0.5) -- (0.5, 0.75);
\end{tikzpicture} \right)$.

\begin{defin}
	Let $X \in \ob(\OB)$. The \emph{canonical $L$-module structure on $X$} is the $L$-module $I(X)$, where $I \colon \OB \to (L, \kk[t])\dashmod$ is the functor discussed above.
\end{defin}

\begin{egs}
	The canonical $L$-module structures on $\goup$, $\godown$, and $\goup \godown \goup$ are, respectively, the natural $L$-module $\left(\goup,\ \begin{tikzpicture}[anchorbase]
	\draw[->, thick] (0, 0) -- (0, 0.75);
	\draw[->, thick] (0.6, 0) -- (0.6, 0.2) arc(0:180:0.2) -- (0.2, 0);
	\end{tikzpicture} \right)$, its dual $\left(\godown,\ -\ \begin{tikzpicture}[anchorbase]
	\draw[->, thick] (0, 0) -- (0, 0.25) arc(180:0:0.25) -- (0.5, 0);
	\draw[<-, thick] (0.25, 0) -- (0.25, 0.25) to[out=90, in=270] (0.5, 0.5) -- (0.5, 0.75);
	\end{tikzpicture} \right)$, and $\left(\goup\godown\goup,\ \begin{tikzpicture}[anchorbase]
	\draw[->, thick] (0, 0) -- (0, 0.75);
	\draw[->, thick] (0.6, 0) -- (0.6, 0.2) arc(0:180:0.2) -- (0.2, 0);
	\draw[<-, thick] (0.8, 0) -- (0.8, 0.75);
	\draw[->, thick] (1, 0) -- (1, 0.75);
	\end{tikzpicture}
	\ -\
	\begin{tikzpicture}[anchorbase]
	\draw[->, thick] (0, 0) -- (0, 0.2) arc(180:0:0.3) -- (0.6, 0);
	\draw[<-, thick] (0.2, 0) -- (0.2, 0.4) to[out=90, in=270] (0, 0.6) -- (0, 0.75);
	\draw[->, thick] (0.4, 0) -- (0.4, 0.4) to[out=90, in=270] (0.6, 0.6) -- (0.6, 0.75);
	\draw[->, thick] (0.9, 0) -- (0.9, 0.75);
	\end{tikzpicture}
	\ +\
	\begin{tikzpicture}[anchorbase]
	\draw[->, thick] (-0.1, 0) -- (-0.1, 0.75);
	\draw[<-, thick] (0.2, 0) -- (0.2, 0.2) arc(180:0:0.3) -- (0.8, 0);
	\draw[->, thick] (0.4, 0) -- (0.4, 0.4) to[out=90, in=270] (0.2, 0.6) -- (0.2, 0.75);
	\draw[<-, thick] (0.6, 0) -- (0.6, 0.4) to[out=90, in=270] (0.8, 0.6) -- (0.8, 0.75);
	\end{tikzpicture} \right)$.
\end{egs}

\begin{defin} \label{def:interpolatingCategories}
	We write $\Curr(\OB)$ (resp. $\Curr(\OB(n))$) for the full subcategory of $(L, \kk[t])\dashmod$ consisting of current $L$-modules in $\OB$ (resp. $\OB(n)$) whose underlying $L$-module structures are canonical.
	
	For each $n \in \N$, let $\Curr(I_n) \colon \Curr(\OB(n)) \to (\gl_n \otimes \kk[t])\dashmod$ denote the functor induced by $I_n$. Explicitly, $\Curr(I_n)$ sends a current $L$-module $\green{V}$ to the current $\gl_n$-module $\red{W} = I_n(\green{V})$, with action morphisms given by $\damapmultred[2px]{p(t)} = I_n\left(\damapmult[2px]{p(t)}\right)$, and $\Curr(I_n)$ acts on morphisms as $I_n$.
\end{defin}

\begin{conj} \label{conj:currentInterpolation}
	For each $n \in \N$, the functor $\Curr(I_n)$ is full, and hence $\Curr(\OB)$ is an interpolating category for the family of current $\gl_n$-module categories.
\end{conj}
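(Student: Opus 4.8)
\emph{Proof plan.} Fix $n \in \N$. Since $\Curr(\OB(n))$ is a full subcategory of $(L,\kk[t])\dashmod$ and $\Curr(I_n)$ acts on morphisms exactly as $I_n$ does, proving that $\Curr(I_n)$ is full amounts to the following lifting problem. Let $V, W \in \ob(\Curr(\OB(n)))$, so that $V$ and $W$ are objects of $\OB(n)$ equipped with current $L$-module structures whose underlying $L$-module structures are the canonical ones; write $\alpha^V_k, \alpha^W_k \colon L \otimes (-) \to (-)$ for the action morphisms indexed by $t^k$, $k \in \N$ (by linearity of the action morphisms it suffices to treat these). Let $g \colon \Curr(I_n)(V) \to \Curr(I_n)(W)$ be a morphism of $(\gl_n \otimes \kk[t])$-modules. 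We must produce $f \in \Hom_{\OB(n)}(V, W)$ with $I_n(f) = g$ and with $f$ commuting with all the $\alpha_k$, i.e.\ $f$ a morphism of current $L$-modules. Since the underlying $\gl_n$-module of $\Curr(I_n)(V)$ is $I_n$ applied to the canonical $L$-module structure on $V$ — that is, by Remark \ref{rem:doubleModule}, the object $I_n(V)$ with its ambient $\gl_n$-action — the map $g$ is in particular a morphism in $\gl_n\dashmod$, so by the fullness of $I_n$ (\cite{comesWilson}) we may choose \emph{some} $f_0 \in \Hom_{\OB(n)}(V,W)$ with $I_n(f_0) = g$. The plan is then to correct $f_0$ by a morphism in $\ker I_n$.

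For each $k \in \N$ set
\[
\delta_k \;:=\; f_0 \circ \alpha^V_k \;-\; \alpha^W_k \circ (\id_L \otimes f_0) \;\in\; \Hom_{\OB(n)}(L \otimes V,\, W).
\]
Applying $I_n$ and using that $g$ is a current $\gl_n$-module morphism — together with $\Curr(I_n)(\alpha^V_k) = I_n(\alpha^V_k)$, by definition of $\Curr(I_n)$ — shows $I_n(\delta_k) = 0$, so every $\delta_k$ lies in the tensor ideal $\ker I_n$. The relations among the $\delta_k$ inherited from \eqref{eq:lmod} and from $f_0$ being an honest $\OB(n)$-morphism make $\{\delta_k\}_{k \in \N}$ a ``cocycle'', and what is needed is that it is a ``coboundary'': one seeks $h \in \ker I_n \cap \Hom_{\OB(n)}(V,W)$ with
\[
h \circ \alpha^V_k \;-\; \alpha^W_k \circ (\id_L \otimes h) \;=\; -\,\delta_k \qquad (k \in \N).
\]
Granting such an $h$, the morphism $f := f_0 + h$ lifts $g$ and lies in $\Curr(\OB(n))$, which proves fullness; the final assertion — that $\Curr(\OB)$ interpolates the family $(\gl_n \otimes \kk[t])\dashmod$ — then follows exactly as the analogous statement does for $\OB$ and $\gl_n\dashmod$. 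Equivalently, the whole task is to show that the functor induced on current-module categories by the quotient $\OB(n) \to \OB(n)/\ker I_n$ is full.

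The decisive input, and where I expect essentially all of the difficulty to concentrate, is a structural description of $\ker I_n$ and of how the current action morphisms $\alpha_k$ interact with it (this is the role of Lemma \ref{lem:incarnationKernel} and the discussion following it). One expects $\ker I_n$ to be the tensor ideal of $\OB(n)$ generated by the total antisymmetrizer on $\goup^{\otimes (n+1)}$ — the image of $\Lambda^{n+1}$, which vanishes in $\gl_n\dashmod$ because $\Lambda^{n+1}$ of the natural module is $0$. That antisymmetrizer is a morphism of \emph{canonical} $L$-modules, which is exactly what one needs to control the $k = 0$ part of the correction (equivalently, to arrange that $f_0$ can be taken $L$-linear in the first place); but nothing established above controls how it, or the ideal it generates, interacts with the non-constant action morphisms $\alpha_k$ for $k \geq 1$. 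Closing the argument thus requires showing that the obstruction complex assembled from $\ker I_n$ and the maps $h \mapsto h \circ \alpha^V_k - \alpha^W_k \circ (\id_L \otimes h)$ is exact at the relevant spot, so that every compatible family $\{\delta_k\}$ is a coboundary — and this does not follow formally from anything in the paper. The one obvious alternative route — showing directly that every $(\gl_n \otimes \kk[t])$-module whose underlying $\gl_n$-module has the form $I_n(X)$, $X \in \ob(\OB(n))$, already arises as some $\Curr(I_n)(V)$, together with enough faithfulness of $I_n$ on the Hom-spaces in play — reduces to the same analysis of $\ker I_n$ and offers no shortcut. This is precisely the gap flagged by Conjecture \ref{conj:currentInterpolation}.
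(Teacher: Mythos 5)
The statement you were asked to prove is explicitly labelled a \emph{conjecture} in the paper, and the paper provides no proof of it: the discussion following Lemma~\ref{lem:incarnationKernel} is devoted precisely to explaining why fullness of $I_n$ does not immediately yield fullness of $\Curr(I_n)$, and closes by saying that further study of the low-dimensional cases ``may shed light on the general problem.'' Your writeup correctly recognizes this. You do not claim a proof; you instead lay out the natural lifting strategy (choose $f_0$ with $I_n(f_0)=g$ via fullness of $I_n$, then try to correct by an element of $\ker I_n$), reformulate the obstruction as a cocycle/coboundary problem for the family $\{\delta_k\}$, and identify the decisive missing input as control over how the tensor ideal $\ker I_n$ — generated by the antisymmetrizer of Lemma~\ref{lem:incarnationKernel} — interacts with the nonconstant action morphisms $\alpha_k$ for $k\ge 1$. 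This is the same gap the paper identifies and illustrates concretely at $n=2$ with the induced modules on $\goup^{\otimes 3}$ and $\goup^{\otimes 4}$: the space of lifts in $\Hom_{\Curr(\OB(2))}(\green{V},\red{W})$ depends on the chosen kernel elements and must be shown nonempty in general. Your observation that the antisymmetrizer itself is a morphism of canonical $L$-modules (it is a linear combination of braidings, which are module morphisms by Proposition~\ref{prop:symmetricMonoidalModules}) is correct and is the right reason the $k=0$ layer causes no trouble, but — as you say — nothing in the paper controls the $k\ge 1$ layers. In short: your plan is sound as far as it goes and is a reasonable formalization of the discussion in the paper, but neither you nor the paper closes the argument; the conjecture remains open, and you have correctly located where the work is.
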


\begin{rem}
	If one defined $\Curr(\OB(n))$ without requiring the underlying $L$-module structures to be canonical, the functors $\Curr(I_n)$ would fail to be full for $n \geq 2$. To see why, consider the current $L$-module $\goup$, with all action morphisms equal to 0. Then $\Curr(I_n)(\goup) = \kk^n$, equipped with the trivial action of $\gl_n \otimes \kk[t]$. Any linear endomorphism of $\kk^n$ is an endomorphism of this current module. But every morphism in $\End_\OB(\goup)$ gets mapped by $I_n$ to a multiple of the identity, so $\Curr(I_n)$ could not be full unless $n = 1$.
\end{rem}

\begin{lem}\label{lem:incarnationKernel}
	For each $n \in \N$, the kernel of $I_n \colon \OB(n) \to \gl_n\dashmod$ is the tensor ideal generated by the \emph{idempotent antisymmetrizer} $\begin{tikzpicture}[anchorbase]
	\draw[->, thick] (-0.125, 0) -- (-0.125, 1);
	\draw[->, thick] (0.625, 0) -- (0.625, 1);
	\node at (0.28, 0.08) {$\dotsb$};
	\node at (0.28, 0.88) {$\dotsb$};
	\filldraw[fill=lightgray,draw=black,rounded corners] (-0.25, 0.25) rectangle (0.75, 0.75);
	\node at (0.25, 0.5) {$n + 1$};
	\end{tikzpicture} := \frac{1}{(n + 1)!}\summ_{\sigma \in S_{n + 1}} \sgn(\sigma)\sigma \in \End_{\OB(n)}(\goup^{\otimes n + 1})$. Here, $S_{n + 1}$ denotes the symmetric group on $n + 1$ letters, and we identify a permutation $\sigma$ with the morphism represented by the corresponding diagrammatic permutation of $n + 1$ upwards-oriented strands.
\end{lem}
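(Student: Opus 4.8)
The plan is to prove containment in both directions, using the known representation-theoretic description of morphism spaces in $\OB(n)$. First I would recall the basis theorem for $\OB$ (from \cite{Brundan_2017}): $\Hom_{\OB(n)}(\goup^{\otimes r}\godown^{\otimes s}, \goup^{\otimes r'}\godown^{\otimes s'})$ has a basis of normally-ordered oriented Brauer diagrams, and after specializing $\delta = n$ these still span (though they need not remain linearly independent). The functor $I_n$ sends these diagrams to the corresponding morphisms built from the natural $\gl_n$-module $X = \kk^n$, its dual, the evaluation/coevaluation, and the flip. Since $I_n$ is full (by \cite{comesWilson}), $\ker I_n$ is a tensor ideal, and to identify it I would show (i) the idempotent antisymmetrizer $e$ on $n+1$ strands lies in $\ker I_n$, and (ii) any element of $\ker I_n$ lies in the tensor ideal $\ngle{e}$ generated by $e$.

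For (i): $I_n(e)$ is the image in $\End_{\gl_n\dashmod}(X^{\otimes n+1})$ of $\frac{1}{(n+1)!}\sum_{\sigma} \sgn(\sigma)\sigma$, which is precisely the projection onto the sign-isotypic component of $S_{n+1}$ acting on $(\kk^n)^{\otimes n+1}$, i.e.\ onto $\Lambda^{n+1}(\kk^n)$. But $\Lambda^{n+1}(\kk^n) = 0$, so $I_n(e) = 0$; hence $\ngle{e}\subseteq \ker I_n$. For (ii): this is the substantive direction. The standard argument (as in the $\mathrm{Rep}(GL_\delta)$ / walled Brauer literature, e.g.\ \cite{comesWilson} and references therein) is to reduce, via the Hom-space structure and cups/caps, to understanding $\ker$ on the ``polynomial part,'' i.e.\ on $\End_{\OB(n)}(\goup^{\otimes m})$ for all $m$, which is the walled Brauer algebra with no downward strands — in this case the specialization of the group algebra $\kk S_m$ acting through its quotient $\End_{\gl_n}((\kk^n)^{\otimes m})$. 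By classical Schur--Weyl duality, the kernel of $\kk S_m \twoheadrightarrow \End_{\gl_n}((\kk^n)^{\otimes m})$ is the two-sided ideal spanned by Young symmetrizers for partitions with more than $n$ rows, and this ideal is generated as a two-sided ideal by a single antisymmetrizer on $n+1$ letters. One then promotes this statement about $\kk S_m$ to the statement about the tensor ideal in $\OB(n)$: a general morphism in $\ker I_n$ is a $\kk$-combination of oriented Brauer diagrams; using cups and caps one ``straightens'' such a diagram to relate it to elements of $\End_{\OB(n)}(\goup^{\otimes m})$ for suitable $m$, and the Schur--Weyl kernel computation there, combined with the fact that cups and caps are morphisms, shows the original morphism factors through $e$.

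The key steps, in order: (1) state the basis/spanning theorem for $\OB(n)$ and recall that $I_n$ is full; (2) compute $I_n(e) = 0$ via $\Lambda^{n+1}(\kk^n) = 0$, giving $\ngle{e}\subseteq\ker I_n$; (3) reduce the reverse containment to the ``all-upward'' morphism spaces using cup/cap adjunctions and the fact that $\ker I_n$ is a tensor ideal closed under these operations; (4) invoke classical Schur--Weyl duality to identify $\ker(\kk S_m \to \End_{\gl_n}((\kk^n)^{\otimes m}))$ and its generation by an $(n+1)$-fold antisymmetrizer; (5) conclude $\ker I_n \subseteq \ngle{e}$.

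The main obstacle I anticipate is step (3)–(4): carefully controlling how an arbitrary oriented Brauer diagram with both up and down strands, lying in $\ker I_n$, can be rewritten so that the Schur--Weyl kernel statement applies. The slick route is to cite the existing identification of $\ker I_n$ in the literature — the functor $\OB(n) \to \gl_n\dashmod$ is, up to the isomorphism $\OB(m-n)\cong \underline{\mathrm{Rep}}(GL_{m-n})$ noted in the excerpt, the functor studied in \cite[\S8.3]{comesWilson}, and the kernel of that functor is already computed there to be generated by exactly this antisymmetrizer. If I invoke that, the proof reduces to step (2) plus a citation; if I want to be self-contained, the Schur--Weyl reduction in (3)–(4) is where the real work lies, and I would present it as a lemma about the polynomial subcategory followed by a diagram-straightening argument.
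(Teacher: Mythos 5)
Your proposal is correct in outline but takes a genuinely different, more explicit route than the paper. The paper's proof is a single citation: the kernel of $I_n$ is identified with the ideal of negligible morphisms, \cite[Thm.~1.10]{negligibleMorphisms} says that this ideal (in the additive Karoubi envelope of $\OB(n)$) is generated by the idempotent antisymmetrizer on $n+1$ upward strands, and the universal property of additive Karoubi envelopes brings the statement back down to $\OB(n)$ itself. Your closing remark -- that the slick option is to cite the literature -- is essentially what the paper does, just via a different reference. Your worked-out Schur--Weyl route buys a more self-contained argument at the cost of more bookkeeping. Step (2) is correct and gives the easy inclusion $\ngle{e} \subseteq \ker I_n$. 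Step (4) is also correct: in $\kk S_m$ the two-sided ideal generated by the $(n+1)$-antisymmetrizer placed on the first $n+1$ letters is exactly the span of the matrix blocks indexed by partitions $\mu \vdash m$ with more than $n$ rows, because by the branching rule $e_{n+1}$ acts nontrivially on $S^\mu$ precisely when $(1^{n+1}) \subseteq \mu$; this agrees with the classical Schur--Weyl kernel.

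The acknowledged gap is step (3), and it does need filling, though the filling is routine. Since the $\Z$-grading on $\OB$ forces $\Hom_{\OB(n)}(\goup^{\otimes r}\godown^{\otimes s}, \goup^{\otimes r'}\godown^{\otimes s'}) = 0$ unless $m := r + s' = r' + s$, bending strands with cups and caps produces an isomorphism from this Hom-space to $\End_{\OB(n)}(\goup^{\otimes m}) \cong \kk S_m$. That isomorphism is implemented by pre- and post-composition with tensor products of identities and (co)evaluations, hence carries $\ker I_n$ into $\ker I_n$ (because $I_n$ is symmetric monoidal and preserves cups and caps) and carries $\ngle{e}$ into $\ngle{e}$ (because a tensor ideal is closed under these operations). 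This reduces the reverse inclusion $\ker I_n \subseteq \ngle{e}$ to the all-upward case, where your step (4) finishes. Two small corrections to your write-up: fullness of $I_n$ is not what makes $\ker I_n$ a tensor ideal -- that is automatic for any monoidal functor; and the normally ordered oriented Brauer diagrams remain a $\kk$-basis, not merely a spanning set, in $\OB(\delta)$ for every $\delta$, since the Hom-spaces of $\OB$ are free over the bubble polynomial ring with that basis.
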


\begin{proof}
	The claim follows from \cite[Thm.~1.10]{negligibleMorphisms} and the universal property of additive Karoubi envelopes.
\end{proof}

	As noted above, each $I_n \colon \OB(n) \to \gl_n\dashmod$ is full. However, this does not immediately imply that $\Curr(I_n)$ is full. For a concrete example, consider the case $n = 2$, and let $a, b \in \kk$ be distinct constants. Let $\green{V} = \goup\goup\goup$ be the current $L$-module induced by $\begin{tikzpicture}[anchorbase]
	\draw[gmod] (0, 0) -- (0, 0.5);
	\adot{(0, 0.25)};
	\end{tikzpicture} := \begin{tikzpicture}[anchorbase]
	\draw[->, thick] (0, 0) -- (0, 1);
	\draw[->, thick] (0.25, 0) -- (0.25, 1);
	\draw[->, thick] (0.5, 0) -- (0.5, 1);
	\end{tikzpicture} + a\ \begin{tikzpicture}[anchorbase]
	\draw[->, thick] (0, 0) -- (0, 1);
	\draw[->, thick] (0.25, 0) -- (0.25, 1);
	\draw[->, thick] (0.5, 0) -- (0.5, 1);
	\filldraw[fill=lightgray,draw=black, rounded corners] (-0.25, 0.25) rectangle (0.75, 0.75);
	\node at (0.25, 0.5) {$3$};
	\end{tikzpicture}$, and $\red{W} = \goup\goup\goup$ the current $L$-module induced by $\begin{tikzpicture}[anchorbase]
	\draw[rmod] (0, 0) -- (0, 0.5);
	\bdot{(0, 0.25)};
	\end{tikzpicture} := \begin{tikzpicture}[anchorbase]
	\draw[->, thick] (0, 0) -- (0, 1);
	\draw[->, thick] (0.25, 0) -- (0.25, 1);
	\draw[->, thick] (0.5, 0) -- (0.5, 1);
	\end{tikzpicture} + b\ \begin{tikzpicture}[anchorbase]
	\draw[->, thick] (0, 0) -- (0, 1);
	\draw[->, thick] (0.25, 0) -- (0.25, 1);
	\draw[->, thick] (0.5, 0) -- (0.5, 1);
	\filldraw[fill=lightgray,draw=black, rounded corners] (-0.25, 0.25) rectangle (0.75, 0.75);
	\node at (0.25, 0.5) {$3$};
	\end{tikzpicture}\ $. Since $\begin{tikzpicture}[anchorbase]
	\draw[->, thick] (0, 0) -- (0, 1);
	\draw[->, thick] (0.25, 0) -- (0.25, 1);
	\draw[->, thick] (0.5, 0) -- (0.5, 1);
	\filldraw[fill=lightgray,draw=black, rounded corners] (-0.25, 0.25) rectangle (0.75, 0.75);
	\node at (0.25, 0.5) {$3$};
	\end{tikzpicture}$ lies in the kernel of $I_2$, we have $\Curr(I_2)(\green{V}) = \Curr(I_2)(\red{W}) = U^{\otimes 3}$ (equipped with the current $\gl_2$-module structure induced by the identity), where $U$ denotes the natural $\gl_2$-module. In order for $\Curr(I_2)$ to be full, the identity map on $U^{\otimes 3}$ must have a $\Curr(I_2)$-preimage in $\Hom_{\Curr(\OB(2))}(\green{V}, \red{W})$. It is straightforward to confirm that $\frac{1}{3}\begin{tikzpicture}[anchorbase]
	\draw[<-, thick] (0, 0) -- (0, -0.25) arc(180:360:0.125) -- (0.25, 0);
	\draw[gmod] (0.5, -0.5) -- (0.5, 0);
	\end{tikzpicture}$ is a right inverse for $\dmodulemult$, and using the fact that such a right inverse exists, one can show that an $\OB(2)$-morphism $\begin{tikzpicture}[anchorbase]
	\draw[gmod] (0, 0) -- (0, 0.25);
	\draw[rmod] (0, 0.25) -- (0, 0.5);
	\bsquare{(0, 0.25)};
	\end{tikzpicture}$ is a morphism of current modules from $\green{V}$ to $\red{W}$ if and only if $\begin{tikzpicture}[anchorbase]
	\draw[gmod] (0, 0) -- (0, 0.25);
	\draw[rmod] (0, 0.25) -- (0, 0.75);
	\bsquare{(0, 0.25)};
	\bdot{(0, 0.5)};
	\end{tikzpicture} = \begin{tikzpicture}[anchorbase]
	\draw[gmod] (0, 0) -- (0, 0.5);
	\draw[rmod] (0, 0.5) -- (0, 0.75);
	\adot{(0, 0.25)};
	\bsquare{(0, 0.5)};
	\end{tikzpicture}$. The $I_2$-preimages of $\id_{V^{\otimes 3}}$ are $\begin{tikzpicture}[anchorbase]
	\draw[gmod] (0, 0) -- (0, 0.25);
	\draw[rmod] (0, 0.25) -- (0, 0.5);
	\bsquaretoken[3px]{0, 0.25}{west}{c};
	\end{tikzpicture} := \begin{tikzpicture}[anchorbase]
	\draw[->, thick] (0, 0) -- (0, 1);
	\draw[->, thick] (0.25, 0) -- (0.25, 1);
	\draw[->, thick] (0.5, 0) -- (0.5, 1);
	\end{tikzpicture} + c\ \begin{tikzpicture}[anchorbase]
	\draw[->, thick] (0, 0) -- (0, 1);
	\draw[->, thick] (0.25, 0) -- (0.25, 1);
	\draw[->, thick] (0.5, 0) -- (0.5, 1);
	\filldraw[fill=lightgray,draw=black, rounded corners] (-0.25, 0.25) rectangle (0.75, 0.75);
	\node at (0.25, 0.5) {$3$};
	\end{tikzpicture}$\ , $c \in \kk$. Using the fact that $\begin{tikzpicture}[anchorbase]
	\draw[->, thick] (0, 0) -- (0, 1);
	\draw[->, thick] (0.25, 0) -- (0.25, 1);
	\draw[->, thick] (0.5, 0) -- (0.5, 1);
	\filldraw[fill=lightgray,draw=black, rounded corners] (-0.25, 0.25) rectangle (0.75, 0.75);
	\node at (0.25, 0.5) {$3$};
	\end{tikzpicture}$ is idempotent, we get $\begin{tikzpicture}[anchorbase]
	\draw[gmod] (0, 0) -- (0, 0.25);
	\draw[rmod] (0, 0.25) -- (0, 0.75);
	\bsquaretoken[3px]{0, 0.25}{west}{c};
	\bdot{(0, 0.5)};
	\end{tikzpicture} = \begin{tikzpicture}[anchorbase]
	\draw[->, thick] (0, 0) -- (0, 1);
	\draw[->, thick] (0.25, 0) -- (0.25, 1);
	\draw[->, thick] (0.5, 0) -- (0.5, 1);
	\end{tikzpicture} + (b + c + bc)\ \begin{tikzpicture}[anchorbase]
	\draw[->, thick] (0, 0) -- (0, 1);
	\draw[->, thick] (0.25, 0) -- (0.25, 1);
	\draw[->, thick] (0.5, 0) -- (0.5, 1);
	\filldraw[fill=lightgray,draw=black, rounded corners] (-0.25, 0.25) rectangle (0.75, 0.75);
	\node at (0.25, 0.5) {$3$};
	\end{tikzpicture}$ and $\begin{tikzpicture}[anchorbase]
	\draw[gmod] (0, 0) -- (0, 0.5);
	\draw[rmod] (0, 0.5) -- (0, 0.75);
	\adot{(0, 0.25)};
	\bsquaretoken[3px]{0, 0.5}{west}{c};
	\end{tikzpicture} = \begin{tikzpicture}[anchorbase]
	\draw[->, thick] (0, 0) -- (0, 1);
	\draw[->, thick] (0.25, 0) -- (0.25, 1);
	\draw[->, thick] (0.5, 0) -- (0.5, 1);
	\end{tikzpicture} + (a + c + ac)\ \begin{tikzpicture}[anchorbase]
	\draw[->, thick] (0, 0) -- (0, 1);
	\draw[->, thick] (0.25, 0) -- (0.25, 1);
	\draw[->, thick] (0.5, 0) -- (0.5, 1);
	\filldraw[fill=lightgray,draw=black, rounded corners] (-0.25, 0.25) rectangle (0.75, 0.75);
	\node at (0.25, 0.5) {$3$};
	\end{tikzpicture}$. Hence $\begin{tikzpicture}[anchorbase]
	\draw[gmod] (0, 0) -- (0, 0.25);
	\draw[rmod] (0, 0.25) -- (0, 0.5);
	\bsquaretoken[3px]{0, 0.25}{west}{c};
	\end{tikzpicture}$ is a morphism of current $L$-modules if and only if $b + c + bc = a + c + ac$. Given that $a \neq b$, this equality holds if and only if $c = -1$, regardless of the choice of $a$ and $b$. Thus $\begin{tikzpicture}[anchorbase]
	\draw[gmod] (0, 0) -- (0, 0.25);
	\draw[rmod] (0, 0.25) -- (0, 0.5);
	\bsquaretoken[3px]{0, 0.25}{west}{-1};
	\end{tikzpicture}$ is the unique $\Curr(I_2)$-preimage of $\id_{U^{\otimes 3}}$ in this case. (If $a = b$, then $\begin{tikzpicture}[anchorbase]
	\draw[gmod] (0, 0) -- (0, 0.25);
	\draw[rmod] (0, 0.25) -- (0, 0.5);
	\bsquaretoken[3px]{0, 0.25}{west}{c};
	\end{tikzpicture}$ is a morphism of current modules for all $c \in \kk$.) To show that $\Curr(I_2)$ is full, one would have to prove that at least one $\Curr(I_2)$-preimage always exists, for all possible choices of current $L$-modules $\green{V}$ and $\red{W}$, and all morphisms between their images in $(\gl_n \otimes \kk[t])\dashmod$.
	
	For another example, keep $n = 2$, but consider the object $\goup\goup\goup\goup$. The kernel of $I_2$ restricted to $\End_{\OB(2)}(\goup\goup\goup\goup)$ is 10-dimensional -- this result and the other claims in this paragraph follow from direct computer calculations. Let $\green{V}$ and $\red{W}$ be the current $L$-modules induced by $\begin{tikzpicture}[anchorbase]
	\draw[->, thick] (0, 0) -- (0, 1);
	\draw[->, thick] (0.25, 0) -- (0.25, 1);
	\draw[->, thick] (0.5, 0) -- (0.5, 1);
	\draw[->, thick] (0.75, 0) -- (0.75, 1);
	\end{tikzpicture}
	\ +\
	\begin{tikzpicture}[anchorbase]
	\draw[->, thick] (0, 0) -- (0, 1);
	\draw[->, thick] (0.25, 0) -- (0.25, 1);
	\draw[->, thick] (0.5, 0) -- (0.5, 1);
	\draw[->, thick] (0.75, 0) -- (0.75, 1);
	\filldraw[fill=white,draw=black] (-0.25, 0.25) rectangle (1, 0.75);
	\node at (0.375, 0.5) {$k_1$};
	\end{tikzpicture}
	$ and $\begin{tikzpicture}[anchorbase]
	\draw[->, thick] (0, 0) -- (0, 1);
	\draw[->, thick] (0.25, 0) -- (0.25, 1);
	\draw[->, thick] (0.5, 0) -- (0.5, 1);
	\draw[->, thick] (0.75, 0) -- (0.75, 1);
	\end{tikzpicture}
	\ +\
	\begin{tikzpicture}[anchorbase]
	\draw[->, thick] (0, 0) -- (0, 1);
	\draw[->, thick] (0.25, 0) -- (0.25, 1);
	\draw[->, thick] (0.5, 0) -- (0.5, 1);
	\draw[->, thick] (0.75, 0) -- (0.75, 1);
	\filldraw[fill=white,draw=black] (-0.25, 0.25) rectangle (1, 0.75);
	\node at (0.375, 0.5) {$k_2$};
	\end{tikzpicture}
	$\ , respectively, where $k_1$ and $k_2$ are morphisms in the kernel of $I_2$. The dimension of the affine space of $\Curr(I_2)$-preimages of $\id_{U^{\otimes 4}}$ in $\Hom_{\Curr(\OB(2))}(\green{V}, \red{W})$ depends on the choice of $k_1$ and $k_2$. For instance, when $\begin{tikzpicture}[anchorbase]
	\draw[->, thick] (0, 0) -- (0, 1);
	\draw[->, thick] (0.25, 0) -- (0.25, 1);
	\draw[->, thick] (0.5, 0) -- (0.5, 1);
	\draw[->, thick] (0.75, 0) -- (0.75, 1);
	\filldraw[fill=white,draw=black] (-0.25, 0.25) rectangle (1, 0.75);
	\node at (0.375, 0.5) {$k_1$};
	\end{tikzpicture} = \begin{tikzpicture}[anchorbase]
	\draw[->, thick] (0, 0) -- (0, 1);
	\draw[->, thick] (0.25, 0) -- (0.25, 1);
	\draw[->, thick] (0.5, 0) -- (0.5, 1);
	\draw[->, thick] (1, 0) -- (1, 1);
	\filldraw[fill=lightgray,draw=black, rounded corners] (-0.25, 0.25) rectangle (0.75, 0.75);
	\node at (0.25, 0.5) {$3$};
	\end{tikzpicture}$ and $\begin{tikzpicture}[anchorbase]
	\draw[->, thick] (0, 0) -- (0, 1);
	\draw[->, thick] (0.25, 0) -- (0.25, 1);
	\draw[->, thick] (0.5, 0) -- (0.5, 1);
	\draw[->, thick] (0.75, 0) -- (0.75, 1);
	\filldraw[fill=white,draw=black] (-0.25, 0.25) rectangle (1, 0.75);
	\node at (0.375, 0.5) {$k_2$};
	\end{tikzpicture} = \begin{tikzpicture}[anchorbase]
	\draw[->, thick] (0, 0) -- (0, 1);
	\draw[->, thick] (0.25, 0) -- (0.25, 1);
	\draw[->, thick] (0.5, 0) -- (0.5, 1);
	\draw[->, thick] (-0.5, 0) -- (-0.5, 1);
	\filldraw[fill=lightgray,draw=black, rounded corners] (-0.25, 0.25) rectangle (0.75, 0.75);
	\node at (0.25, 0.5) {$3$};
	\end{tikzpicture}$, the space of preimages has dimension 6. If we use the same choice of $k_1$ but define $\begin{tikzpicture}[anchorbase]
	\draw[->, thick] (0, 0) -- (0, 1);
	\draw[->, thick] (0.25, 0) -- (0.25, 1);
	\draw[->, thick] (0.5, 0) -- (0.5, 1);
	\draw[->, thick] (0.75, 0) -- (0.75, 1);
	\filldraw[fill=white,draw=black] (-0.25, 0.25) rectangle (1, 0.75);
	\node at (0.375, 0.5) {$k_2$};
	\end{tikzpicture} = \begin{tikzpicture}[anchorbase]
	\draw[->, thick] (0, -0.5) -- (0, 1);
	\draw[->, thick] (0.25, -0.5) -- (0.25, 1);
	\draw[->, thick] (1, -0.5) to[out=90, in=270] (0.5, 0) -- (0.5, 1);
	\draw[->, thick] (0.5, -0.5) to[out=90, in=270] (1, 0) -- (1, 1);
	\filldraw[fill=lightgray,draw=black, rounded corners] (-0.25, 0.25) rectangle (0.75, 0.75);
	\node at (0.25, 0.5) {$3$};
	\end{tikzpicture}$, then the space of preimages has dimension 4. Studying these low-dimensional cases further may shed light on the general problem of showing that $\Curr(I_n)$ is full.

The Brauer category $\B$ was first explicitly defined and studied by Lehrer and Zhang in \cite{LZBrauerCategory}. Its endomorphism algebras are the \emph{Brauer algebras} introduced by Richard Brauer in \cite{brauerAlgebras}, and it interpolates between the categories $\so_n\dashmod$, $n \in \N$ (as well as between the categories $\sp_{2n}\dashmod$ and, more generally, the categories $\osp_{n|2m}\dashmod$). Note that the following definition matches \cite[Def.~9.1]{diagrammaticsForRealSupergroups} (taking $A = \kk$, $\diamond = \id$, and $\sigma = 0$), and is a refinement of the presentation outlined in \cite[Thm.~2.6]{LZBrauerCategory}.

\begin{defin} \label{def:brauerCategory}
	The \emph{(unoriented) Brauer category}, denoted $\B$, is the strict $\kk$-linear monoidal category with one generating object, written as $\go$, generating morphisms $\dcross, \dcup, \dcap,$ and the following relations:
	\begin{equation} \label{rel:unorientedBasics}
	%braiding
	\begin{tikzpicture}[anchorbase]
	\draw[-, thick] (0, 0) -- (0.8, 0.8);
	\draw[-, thick] (0.4, 0) .. controls (0, 0.4) .. (0.4, 0.8);
	\draw[-, thick] (0.8, 0) -- (0, 0.8);
	\end{tikzpicture}
	\ =\
	\begin{tikzpicture}[anchorbase]
	\draw[-, thick] (0, 0) -- (0.8, 0.8);
	\draw[-, thick] (0.4, 0) .. controls (0.8, 0.4) .. (0.4, 0.8);
	\draw[-, thick] (0.8, 0) -- (0, 0.8);
	\end{tikzpicture}\ , \quad
	%double crossing
	\begin{tikzpicture}[anchorbase]
	\draw[-, thick] (0, 0) -- (0, 0.2) to[out=90,in=270] (0.4, 0.6) to[out=90,in=270] (0, 1) -- (0, 1.2);
	\draw[-, thick] (0.4, 0) -- (0.4, 0.2) to[out=90,in=270] (0, 0.6) to[out=90,in=270] (0.4, 1) -- (0.4, 1.2);
	\end{tikzpicture}
	\ =\
	\begin{tikzpicture}[anchorbase]
	\draw[-, thick] (0, 0) -- (0, 1.2);
	\draw[-, thick] (0.4, 0) -- (0.4, 1.2);
	\end{tikzpicture}\ , \quad
	%zigzag
	\begin{tikzpicture}[anchorbase]
	\draw[-, thick] (0,0) -- (0,0.6) arc(180:0:0.2) -- (0.4,0.4) arc(180:360:0.2) -- (0.8,1);
	\end{tikzpicture}
	\ =\
	\begin{tikzpicture}[anchorbase]
	\draw[-, thick] (0,0) -- (0,1);
	\end{tikzpicture}\ , \quad
	\begin{tikzpicture}[anchorbase]
	\draw[-, thick] (0, 1) -- (0, 0.4) arc(180:360:0.2) -- (0.4, 0.6) arc(180:0:0.2) -- (0.8, 0);
	\end{tikzpicture}
	\ =\
	\begin{tikzpicture}[anchorbase]
	\draw[-, thick] (0,0) -- (0,1);
	\end{tikzpicture}\ , \quad
	\begin{tikzpicture}[anchorbase]
	\draw[-, thick] (0, 0) to[out=90,in=270] (0.5, 0.5) arc(0:180:0.25) to[out=270,in=90] (0.5, 0);
	\end{tikzpicture}\ 
	\ =\
	\begin{tikzpicture}[anchorbase]
	\draw[-, thick] (0, 0) -- (0, 0.25) arc(0:180:0.25) -- (-0.5, 0);
	\end{tikzpicture}\ , \quad
	\begin{tikzpicture}[anchorbase]
	\draw[-, thick] (0, 0) -- (0, 0.25) arc(180:0:0.25) -- (0.5, 0);
	\draw[-, thick] (0.25, 0) to[out=90,in=270] (-0.25, 0.75);
	\end{tikzpicture}\
	\ =\
	\begin{tikzpicture}[anchorbase]
	\draw[-, thick] (0, 0) -- (0, 0.25) arc(180:0:0.25) -- (0.5, 0);
	\draw[-, thick] (0.25, 0) to[out=90,in=270] (0.75, 0.75);
	\end{tikzpicture}\ .
	\end{equation}
	
	For each $\delta \in \kk$, we define the \emph{specialized Brauer category} $\B(\delta)$ to be the category obtained from $\B$ by imposing one additional relation:
	\begin{equation}
	\begin{tikzpicture}[anchorbase]
	\draw[-, thick] (0, 0) arc(0:360:0.25);
	\end{tikzpicture} = \delta.
	\end{equation}
\end{defin}

\begin{lem}
	The Brauer category is the free $\kk$-linear symmetric monoidal category generated by a symmetrically self-dual object. That is, if $\CC$ is a $\kk$-linear symmetric monoidal category and $X \in \ob(\CC)$ is a self-dual object such that $$\begin{tikzpicture}[anchorbase]
	\draw[-, thick] (0, 0) to[out=90,in=270] (0.5, 0.5) arc(0:180:0.25) to[out=270,in=90] (0.5, 0);
	\node at (0, -0.2) {$X$};
	\node at (0.5, -0.2) {$X$};
	\end{tikzpicture}\ 
	\ =\
	\begin{tikzpicture}[anchorbase]
	\draw[-, thick] (0, 0) -- (0, 0.25) arc(0:180:0.25) -- (-0.5, 0);
	\node at (0, -0.2) {$X$};
	\node at (-0.5, -0.2) {$X$};
	\end{tikzpicture},$$ 
	then there is a unique monoidal functor $I \colon \B \to \CC$ such that $$I(\go) = X, \quad \quad I\left(\dcross\right) = \begin{tikzpicture}[anchorbase]
	\draw[bmod] (0, 0) -- (0.5, 0.5);
	\draw[bmod] (0.5, 0) -- (0, 0.5);
	\node at (0, -0.25) {$X$};
	\node at (0.5, -0.25) {$X$};
	\node at (0, 0.75) {$X$};
	\node at (0.5, 0.75) {$X$};
	\end{tikzpicture}, \quad \text{and} \quad I\left(\dcap\right) = \begin{tikzpicture}[anchorbase]
	\draw[bmod] (0, 0) -- (0, 0.2) arc(180:0:0.25) -- (0.5, 0);
	\node at (0, -0.25) {$X$};
	\node at (0.5, -0.25) {$X$};
	\end{tikzpicture}.$$
	This functor also satisfies $$I\left(\dcup\right) = \begin{tikzpicture}[anchorbase, yscale=-1]
	\draw[bmod] (0, 0) -- (0, 0.2) arc(180:0:0.25) -- (0.5, 0);
	\node at (0, -0.25) {$X$};
	\node at (0.5, -0.25) {$X$};
	\end{tikzpicture}.$$ We call such a functor an \emph{(unoriented) incarnation functor}.
\end{lem}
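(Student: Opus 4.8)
The plan is to argue exactly as in the proof of Lemma~\ref{lem:orientedBrauerFree}. Here ``$X$ self-dual'' means that $X$ carries a duality datum $(X,X,\ev_X,\mathrm{coev}_X)$ --- morphisms $\ev_X\colon X\otimes X\to\one$ and $\mathrm{coev}_X\colon\one\to X\otimes X$ satisfying the zigzag identities \eqref{eq:zigzag} --- and the displayed hypothesis says in addition that $\ev_X$ is unchanged upon precomposition with $\sigma_{X,X}$ (a cap with crossed legs equals the plain cap). Define the candidate functor on objects by $I(\go^{\otimes n})=X^{\otimes n}$ and on the three generating morphisms by $I\left(\dcross\right)=\sigma_{X,X}$, $I\left(\dcap\right)=\ev_X$, and $I\left(\dcup\right)=\mathrm{coev}_X$.

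For existence, it suffices to check that $I$ carries each relation in \eqref{rel:unorientedBasics} to an identity valid in $\CC$, since $\B$ is presented as a $\kk$-linear monoidal category by the object $\go$ and these three generating morphisms subject to \eqref{rel:unorientedBasics}. The first two relations hold because $\sigma$ is a symmetric braiding (the braid relation and $\sigma_{X,X}^2=\id$); the two zigzag relations hold because $(\ev_X,\mathrm{coev}_X)$ is a duality datum; the relation equating the crossed cap with the plain cap is exactly the symmetric-self-duality hypothesis on $X$; and the relation sliding a strand past a cap follows from naturality of $\sigma$ together with the zigzag identities. This verification is the content of \cite[Thm.~2.6]{LZBrauerCategory} (see also \cite[Def.~9.1]{diagrammaticsForRealSupergroups} with $A=\kk$, $\diamond=\id$, $\sigma=0$), so we would quote it rather than redo the bookkeeping.

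For uniqueness together with the final assertion, note that every object of $\B$ is a tensor power of $\go$, so a monoidal functor out of $\B$ is determined on objects by its value at $\go$, and every morphism of $\B$ is a $\kk$-linear combination of composites and tensor products of identities with $\dcross$, $\dcup$, and $\dcap$, so a $\kk$-linear monoidal functor is determined by its values on these three generators. The values at $\dcross$ and $\dcap$ are prescribed in the statement, and the value at $\dcup$ is then forced: applying $I$ to the two zigzag relations of $\B$ shows that $I\left(\dcup\right)$ together with $I\left(\dcap\right)=\ev_X$ satisfies the zigzag identities exhibiting $(X,X)$ as a dual pair, and the coevaluation of a dual pair is uniquely determined by the evaluation, whence $I\left(\dcup\right)=\mathrm{coev}_X$, the cup diagram displayed in the statement. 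The one step that is not purely formal is the relation-checking in the existence part --- in particular the ``strand past a cap'' relation --- whose verification in a not-necessarily-strict symmetric monoidal category is routine but notation-heavy; as in the oriented case we would import it from the literature rather than reproduce it.
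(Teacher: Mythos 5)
Your proof is correct and takes essentially the same approach as the paper: the paper's proof simply says "arguments analogous to those for the oriented case," where the oriented case cites a reference for existence and then derives uniqueness and the cup value from the generators-and-relations presentation. You do the same, citing \cite[Thm.~2.6]{LZBrauerCategory} for the relation-checking and deducing $I(\dcup) = \mathrm{coev}_X$ from the zigzag identities and uniqueness of coevaluations, which is exactly the analogue of the paper's handling of \eqref{eq:orientedBrauerFree2}.
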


\begin{proof}
	The claim follows from arguments analogous to those for the oriented case (Lemma \ref{lem:orientedBrauerFree}). 
\end{proof}

Similarly to the oriented case considered above, for each $n \in \N$ we write $I_n'$ for the unoriented incarnation functor obtained by picking $\CC$ to be $\so_n\dashmod$ and $X$ to be the natural $\so_n$-module. (One can also work with $\sp_{2n}$ rather than $\so_n$, or, more generally, the orthosymplectic Lie algebra $\osp_{n \mid 2m}$.) We have $I_n' \left(\begin{tikzpicture}[anchorbase]
\draw[-, thick] (0, 0) arc(0:360:0.25);
\end{tikzpicture} \right) = \dim(X) = n$, and hence we may regard $I_n'$ as a functor from $\B(n)$ to $\so_n\dashmod$.

\begin{defin} \label{def:karoubiEnvelope}
	If $\CC$ is a symmetric monoidal $\kk$-linear category, we write $\Kar(\CC)$ for its additive Karoubi envelope, i.e.\ the idempotent completion of its additive envelope. Concretely, we take the objects of $\Kar(\CC)$ to be pairs $(X, f)$, where $X = X_1 \oplus \dotsb \oplus X_n$ is a finite formal sum of objects from $\CC$, and $f \colon X \to X$ is an idempotent morphism in the additive envelope of $\CC$. A morphism $h \colon (X, f) \to (Y, g)$ is a morphism $h \colon X \to Y$ in the additive envelope of $\CC$ satisfying $g \circ h = h = h \circ f$.
\end{defin}

\begin{lem} \label{lem:unorientedBrauerObject}
	Consider the object $\green{L} = \left(\go\go, \begin{tikzpicture}[anchorbase]
	\draw[gmod] (0, 0) -- (0, 0.5);
	\adot{(0, 0.25)};
	\end{tikzpicture} \right) := \left(\go \go, \frac{1}{2}\left(\begin{tikzpicture}[anchorbase]
	\draw[bmod] (0, 0) -- (0, 0.5);
	\draw[bmod] (0.25, 0) -- (0.25, 0.5);
	\end{tikzpicture}
	\ -\
	\begin{tikzpicture}[anchorbase]
	\draw[bmod] (0, 0) -- (0.5, 0.5);
	\draw[bmod] (0.5, 0) -- (0, 0.5);
	\end{tikzpicture}\right)\right)$ in $\Kar(\B)$. We have that 
	$$\left(\green{L}, \begin{tikzpicture}[anchorbase]
	\draw[gmod] (0, 0) -- (0.25, 0.25) -- (0.25, 0.5);
	\draw[gmod] (0.5, 0) -- (0.25, 0.25);
	\adot{(0.25, 0.25)};
	\end{tikzpicture} \right) := \left(\green{L}, \frac{1}{4}\left(\begin{tikzpicture}[anchorbase]
	\draw[-, thick] (0, 0) to[out=90, in=270] (0.25, 0.5) -- (0.25, 0.75);
	\draw[-, thick] (0.25, 0) -- (0.25, 0.2) arc(180:0:0.125) -- (0.5, 0);
	\draw[-, thick] (0.75, 0) to[out=90, in=270] (0.5, 0.5) -- (0.5, 0.75);
	\end{tikzpicture}
	\ -\
	\begin{tikzpicture}[anchorbase]
	\draw[-, thick] (0.5, -0.75) -- (0.5, -0.5) to[out=90, in=270] (0, 0) to[out=90, in=270] (0.25, 0.5) -- (0.25, 0.75);
	\draw[-, thick] (0.75, -0.75) -- (0.75, -0.5) to[out=90, in=270] (0.5, 0) arc(0:180:0.125) to[out=270, in=90] (0, -0.5) -- (0, -0.75);
	\draw[-, thick] (0.25, -0.75) -- (0.25, -0.5) to[out=90, in=270] (0.75, 0) to[out=90, in=270] (0.5, 0.5) -- (0.5, 0.75);
	\end{tikzpicture} 
	\ -\
	\begin{tikzpicture}[anchorbase]
	\draw[bmod] (0, 0) -- (0, 1);
	\draw[bmod] (0.25, 0) -- (0.25, 0.25) arc(180:0:0.25) -- (0.75, 0);
	\draw[bmod] (0.5, 0) to[out=90, in=270] (1, 0.5) -- (1, 1);
	\end{tikzpicture}
	\ +\
	\begin{tikzpicture}[anchorbase]
	\draw[bmod] (0, 0) -- (0, 0.25) arc(180:0:0.25) -- (0.5, 0);
	\draw[bmod] (0.25, 0) to[out=90, in=270] (0.75, 0.5) -- (0.75, 1);
	\draw[bmod] (0.75, 0) -- (0.75, 0.25) to[out=90, in=270] (0.5, 0.5) -- (0.5, 1);
	\end{tikzpicture}
	\ -\
	\begin{tikzpicture}[anchorbase, xscale=-1]
	\draw[bmod] (0, 0) -- (0, 1);
	\draw[bmod] (0.25, 0) -- (0.25, 0.25) arc(180:0:0.25) -- (0.75, 0);
	\draw[bmod] (0.5, 0) to[out=90, in=270] (1, 0.5) -- (1, 1);
	\end{tikzpicture}
	\ +\
	\begin{tikzpicture}[anchorbase, xscale=-1]
	\draw[bmod] (0, 0) -- (0, 0.25) arc(180:0:0.25) -- (0.5, 0);
	\draw[bmod] (0.25, 0) to[out=90, in=270] (0.75, 0.5) -- (0.75, 1);
	\draw[bmod] (0.75, 0) -- (0.75, 0.25) to[out=90, in=270] (0.5, 0.5) -- (0.5, 1);
	\end{tikzpicture}
	\ +\
	\begin{tikzpicture}[anchorbase]
	\draw[bmod] (0, 0) -- (0, 0.25) arc(180:0:0.375) -- (0.75, 0);
	\draw[bmod] (0.25, 0) to[out=90, in=270] (-0.25, 0.5) -- (-0.25, 1);
	\draw[bmod] (0.5, 0) to[out=90, in=270] (1, 0.5) -- (1, 1);
	\end{tikzpicture}
	\ -\
	\begin{tikzpicture}[anchorbase]
	\draw[-, thick] (0, 0) to[out=90, in=270] (0.25, 0.5) to[out=90, in=270] (0.5, 0.75) -- (0.5, 1);
	\draw[-, thick] (0.25, 0) -- (0.25, 0.2) arc(180:0:0.125) -- (0.5, 0);
	\draw[-, thick] (0.75, 0) to[out=90, in=270] (0.5, 0.5) to[out=90, in=270] (0.25, 0.75) -- (0.25, 1);
	\end{tikzpicture}\right)\right)$$
	is a Lie algebra in $\Kar(\B)$, and $\left(\left(\go, \id_{\go}\right),\ \frac{1}{2}\left(\begin{tikzpicture}[anchorbase]
	\draw[-, thick] (0, 0) -- (0, 0.75);
	\draw[-, thick] (0.6, 0) -- (0.6, 0.2) arc(0:180:0.2) -- (0.2, 0);
	\end{tikzpicture}
	\ -\ 
	\begin{tikzpicture}[anchorbase]
	\draw[bmod] (0, 0) -- (0, 0.25) arc(180:0:0.25) -- (0.5, 0);
	\draw[bmod] (0.25, 0) -- (0.25, 0.25) to[out=90, in=270] (0, 0.5) -- (0, 0.75);
	\end{tikzpicture}\right) \right)$ is a self-dual module for this Lie algebra. Moreover, for each $n \in \N$, the $I_n'$-image of this Lie algebra is $\so_n$ equipped with the usual Lie bracket, and the $I_n'$-image of the module is the natural $\so_n$-module.
\end{lem}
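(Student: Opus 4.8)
The plan is to realise $\green{L}$ as the fixed‑point Lie subalgebra of an involution of the larger ``$\gl$‑type'' Lie algebra $\go\go$, mirroring the classical description $\so_n = \{A \in \gl_n : A^t = -A\}$.

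First I would invoke Lemma~\ref{lem:orientedBrauerLieObject} for the object $\go$ of $\B$. Since $\go$ is symmetrically self‑dual (the relevant zigzag, symmetry and delooping identities are among the defining relations of $\B$), that lemma makes $\go\go$ into a Lie algebra in $\B$, hence in $\Kar(\B)$, with bracket $b_0 = m - m^{\op}$, the difference of the two ``composition of endomorphisms'' products, which are (up to scalar) the first two terms of the bracket displayed in the statement; it also produces the natural module $(\go,\lambda)$ and its dual module $(\go,\lambda^{\vee})$, both carried by $\go=\go^{*}$, with $\lambda$ the first term appearing in the displayed module action. By Example~\ref{ex:orientedBrauerLieObject}, for each $n$ the functor $I_n'$ sends $(\go\go,b_0)$ to $\gl_n=\End(V)$ with the commutator bracket, $(\go,\lambda)$ to the natural module $V$, and $(\go,\lambda^{\vee})$ to $V^{*}$. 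The idempotent $e=\tfrac12(\id_{\go\go}-\dcross)\in\End_{\B}(\go\go)$ splits in $\Kar(\B)$, and the object $\green{L}$ of the statement is by definition its image.

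Next I would pin down the relevant involution: set $\theta:=-\dcross\colon\go\go\to\go\go$. It is involutive because $\dcross$ squares to the identity in $\B$. A short diagram manipulation (the defining topological relations of $\B$) gives $\dcross\circ m = m^{\op}\circ(\dcross\otimes\dcross)$ and $\dcross\circ m^{\op}=m\circ(\dcross\otimes\dcross)$ --- these say that reversing the order of ``composition'' equals transposing both factors --- and combining them yields $b_0\circ(\theta\otimes\theta)=\theta\circ b_0$, so $\theta$ is an involutive automorphism of $(\go\go,b_0)$ whose $(+1)$‑eigenobject is $(\go\go,\tfrac12(\id+\theta))=(\go\go,e)=\green{L}$. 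From here the first two assertions of the lemma follow from the ``fixed subalgebra of an involution'' construction (equivalently, the grade‑$0$ piece of the $\Z_2$‑graded Lie algebra picture of Definition~\ref{def:additiveGradedLieAlgebraObject}): since $\dcross\circ e=-e$ one checks $e\circ b_0\circ(e\otimes e)=b_0\circ(e\otimes e)$, so $b:=b_0\circ(e\otimes e)$ is a well‑defined morphism $\green{L}\otimes\green{L}\to\green{L}$ in $\Kar(\B)$; expanding $e$ and applying the defining relations of $\B$ rewrites $b$ as exactly the $8$‑term morphism in the statement, and that $b$ satisfies \eqref{eq:skew} and \eqref{eq:jacobi} follows by pre‑ and post‑composing the corresponding identities for $b_0$ with $e$. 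Likewise $\rho:=\lambda\circ(e\otimes\id_{\go})$ equals the $2$‑term action in the statement and makes $(\go,\rho)$ an $\green{L}$‑module, \eqref{eq:lmod} again coming from composing the $\lambda$‑version with $e$.

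For self‑duality of $(\go,\rho)$ I would use Lemma~\ref{lem:dualModule} (specialised to ordinary Lie‑algebra modules via Remark~\ref{rem:generalizationsExplained}): the dual‑module structure on $\go^{*}=\go$ is the negative of the evaluation--coevaluation composite applied to $\rho$, and using $\lambda^{\vee}=-\lambda\circ(\dcross\otimes\id_{\go})$ together with $(\dcross\otimes\id_{\go})\circ(e\otimes\id_{\go})=-(e\otimes\id_{\go})$ this composite collapses to $\rho$ itself, so the evaluation and coevaluation of $\go$ are $\green{L}$‑module morphisms and $(\go,\rho)$ is self‑dual. Finally, $I_n'$ extends to $\Kar(\B)$, sends $\dcross$ to the swap on $V\otimes V$, hence $e$ to the antisymmetriser and $\green{L}$ to $\Lambda^{2}V$; under the isomorphism $V\otimes V\cong\End(V)$ induced by the $\so_n$‑invariant symmetric form on $V$ the swap corresponds to matrix transpose and $b_0$ to the commutator (Example~\ref{ex:orientedBrauerLieObject}), so $I_n'(\green{L})$ with bracket $I_n'(b)$ is $\so_n$ with its usual Lie bracket and $I_n'(\go,\rho)$ is $V$ with the natural $\so_n$‑action. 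I expect the main obstacle to be organisational rather than conceptual: carrying the Karoubi‑envelope bookkeeping through every step and matching the two explicitly displayed (and rather unwieldy) morphisms with their conceptual descriptions $b_0\circ(e\otimes e)$ and $\lambda\circ(e\otimes\id_{\go})$ via the relations of $\B$; the one genuinely new input, the automorphism property of $\theta$, reduces to the two easy diagram identities above.
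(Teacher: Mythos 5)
Your proposal is correct and, after the key technical step, converges with the paper's proof. The paper's argument hinges on the identity \eqref{eq:brauerLieIdentity}, that $e \circ b_0 \circ (e \otimes e) = b_0 \circ (e \otimes e)$ for the antisymmetrizer $e = \tfrac12(\id - \sigma)$ and the $\gl$-type bracket $b_0$ of Lemma \ref{lem:orientedBrauerLieObject}; the paper asserts this as a ``direct computation'' and then reduces \eqref{eq:skew}, \eqref{eq:jacobi}, and \eqref{eq:lmod} for the truncated structures to the $\gl$-type case by repeatedly inserting and removing $e$'s via that identity. You derive \eqref{eq:brauerLieIdentity} more conceptually: $\theta := -\sigma$ is an involutive Lie-algebra automorphism of $(\go\go, b_0)$ (a two-line check using the symmetry $\dcap \circ \dcross = \dcap$ of the pairing in $\B$), and $e = \tfrac12(\id + \theta)$ is the projector onto its fixed subalgebra, whence $e \circ b_0 \circ (e\otimes e) = \tfrac12\bigl(b_0 + b_0 \circ (\theta\otimes\theta)\bigr) \circ (e\otimes e) = b_0 \circ (e\otimes e)$. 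This is the diagrammatic counterpart of realising $\so_n$ as the fixed subalgebra of $A \mapsto -A^\top$ inside $\gl_n$. Your self-duality step via $\sigma \circ e = -e$ and Lemma \ref{lem:dualModule}, and the $I_n'$-image computations via the antisymmetriser and the symmetric form, likewise recover the paper's. The automorphism framing buys a short, transparent derivation of the central identity and makes the classical analogy explicit, at the cost of a separate check that $\theta$ respects $b_0$, which the paper leaves implicit in its unstated direct computation.
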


\begin{proof}
	A direct computation shows that
	\begin{equation} \label{eq:brauerLieIdentity}
	\begin{tikzpicture}[anchorbase]
	\draw[gmod] (0, 0) -- (0.25, 0.25) -- (0.25, 0.5);
	\draw[gmod] (0.5, 0) -- (0.25, 0.25);
	\adot{(0.25, 0.25)};
	\end{tikzpicture} = \begin{tikzpicture}[anchorbase]
	\draw[gmod] (0, 0) -- (0, 0.5);
	\adot{(0, 0.25)};
	\end{tikzpicture} \circ \left(\begin{tikzpicture}[anchorbase]
	\draw[-, thick] (0, 0) to[out=90, in=270] (0.25, 0.5) -- (0.25, 0.75);
	\draw[-, thick] (0.25, 0) -- (0.25, 0.2) arc(180:0:0.125) -- (0.5, 0);
	\draw[-, thick] (0.75, 0) to[out=90, in=270] (0.5, 0.5) -- (0.5, 0.75);
	\end{tikzpicture}
	\ -\
	\begin{tikzpicture}[anchorbase]
	\draw[-, thick] (0.5, -0.75) -- (0.5, -0.5) to[out=90, in=270] (0, 0) to[out=90, in=270] (0.25, 0.5) -- (0.25, 0.75);
	\draw[-, thick] (0.75, -0.75) -- (0.75, -0.5) to[out=90, in=270] (0.5, 0) arc(0:180:0.125) to[out=270, in=90] (0, -0.5) -- (0, -0.75);
	\draw[-, thick] (0.25, -0.75) -- (0.25, -0.5) to[out=90, in=270] (0.75, 0) to[out=90, in=270] (0.5, 0.5) -- (0.5, 0.75);
	\end{tikzpicture} \right) \circ \begin{tikzpicture}[anchorbase]
	\draw[gmod] (0, 0) -- (0, 0.5);
	\adot{(0, 0.25)};
	\end{tikzpicture}\ \begin{tikzpicture}[anchorbase]
	\draw[gmod] (0, 0) -- (0, 0.5);
	\adot{(0, 0.25)};
	\end{tikzpicture} = \left(\begin{tikzpicture}[anchorbase]
	\draw[-, thick] (0, 0) to[out=90, in=270] (0.25, 0.5) -- (0.25, 0.75);
	\draw[-, thick] (0.25, 0) -- (0.25, 0.2) arc(180:0:0.125) -- (0.5, 0);
	\draw[-, thick] (0.75, 0) to[out=90, in=270] (0.5, 0.5) -- (0.5, 0.75);
	\end{tikzpicture}
	\ -\
	\begin{tikzpicture}[anchorbase]
	\draw[-, thick] (0.5, -0.75) -- (0.5, -0.5) to[out=90, in=270] (0, 0) to[out=90, in=270] (0.25, 0.5) -- (0.25, 0.75);
	\draw[-, thick] (0.75, -0.75) -- (0.75, -0.5) to[out=90, in=270] (0.5, 0) arc(0:180:0.125) to[out=270, in=90] (0, -0.5) -- (0, -0.75);
	\draw[-, thick] (0.25, -0.75) -- (0.25, -0.5) to[out=90, in=270] (0.75, 0) to[out=90, in=270] (0.5, 0.5) -- (0.5, 0.75);
	\end{tikzpicture} \right) \circ \begin{tikzpicture}[anchorbase]
	\draw[gmod] (0, 0) -- (0, 0.5);
	\adot{(0, 0.25)};
	\end{tikzpicture}\ \begin{tikzpicture}[anchorbase]
	\draw[gmod] (0, 0) -- (0, 0.5);
	\adot{(0, 0.25)};
	\end{tikzpicture}.
	\end{equation}
	The first equality confirms that $\begin{tikzpicture}[anchorbase]
	\draw[gmod] (0, 0) -- (0.25, 0.25) -- (0.25, 0.5);
	\draw[gmod] (0.5, 0) -- (0.25, 0.25);
	\adot{(0.25, 0.25)};
	\end{tikzpicture}$ is indeed a morphism from $\green{L} \otimes \green{L}$ to $\green{L}$ in $\Kar(\B)$. Lemma \ref{lem:orientedBrauerLieObject} tells us that $\left(\go\go, \begin{tikzpicture}[anchorbase]
	\draw[gmod] (0, 0) -- (0.25, 0.25) -- (0.25, 0.5);
	\draw[gmod] (0.5, 0) -- (0.25, 0.25);
	\end{tikzpicture}\right) := \left(\go\go, \begin{tikzpicture}[anchorbase]
	\draw[-, thick] (0, 0) to[out=90, in=270] (0.25, 0.5) -- (0.25, 0.75);
	\draw[-, thick] (0.25, 0) -- (0.25, 0.2) arc(180:0:0.125) -- (0.5, 0);
	\draw[-, thick] (0.75, 0) to[out=90, in=270] (0.5, 0.5) -- (0.5, 0.75);
	\end{tikzpicture}
	\ -\
	\begin{tikzpicture}[anchorbase]
	\draw[-, thick] (0.5, -0.75) -- (0.5, -0.5) to[out=90, in=270] (0, 0) to[out=90, in=270] (0.25, 0.5) -- (0.25, 0.75);
	\draw[-, thick] (0.75, -0.75) -- (0.75, -0.5) to[out=90, in=270] (0.5, 0) arc(0:180:0.125) to[out=270, in=90] (0, -0.5) -- (0, -0.75);
	\draw[-, thick] (0.25, -0.75) -- (0.25, -0.5) to[out=90, in=270] (0.75, 0) to[out=90, in=270] (0.5, 0.5) -- (0.5, 0.75);
	\end{tikzpicture}\right)$ is a Lie algebra in $\B$. Hence: 
	\begin{multline*}
	\begin{tikzpicture}[anchorbase]
	\draw[gmod] (0.25, 0) -- (0.5, 0.25);
	\draw[gmod] (0, 0) -- (0, 0.25) -- (0.25, 0.5);
	\draw[gmod] (0.75, 0) -- (0.5, 0.25) -- (0.25, 0.5) -- (0.25, 0.75);
	\adot{(0.5, 0.25)};
	\adot{(0.25, 0.5)};
	\end{tikzpicture}
	\overset{\eqref{eq:brauerLieIdentity}}{=}
	\begin{tikzpicture}[anchorbase]
	\draw[gmod] (0.25, 0) -- (0.5, 0.25);
	\draw[gmod] (0, 0) -- (0, 0.25) -- (0.25, 0.5);
	\draw[gmod] (0.75, 0) -- (0.5, 0.25) -- (0.25, 0.5) -- (0.25, 0.75);
	\adot{(0.125, 0.375)};
	\adot{(0.375, 0.375)};
	\adot{(0.375, 0.125)};
	\adot{(0.625, 0.125)};
	\end{tikzpicture} 
	\overset{\eqref{eq:brauerLieIdentity}}{=}
	\begin{tikzpicture}[anchorbase]
	\draw[gmod] (0.25, 0) -- (0.5, 0.25);
	\draw[gmod] (0, 0) -- (0, 0.25) -- (0.25, 0.5);
	\draw[gmod] (0.75, 0) -- (0.5, 0.25) -- (0.25, 0.5) -- (0.25, 0.75);
	\adot{(0, 0.125)};
	\adot{(0.375, 0.125)};
	\adot{(0.625, 0.125)};
	\end{tikzpicture}
	\overset{\eqref{eq:jacobi}}{=}
	\begin{tikzpicture}[anchorbase]
	\draw[gmod] (0.5, 0) -- (0.25, 0.25);
	\draw[gmod] (0, 0) -- (0.25, 0.25) -- (0.5, 0.5);
	\draw[gmod] (0.75, 0) -- (0.75, 0.25) -- (0.5, 0.5) -- (0.5, 0.75);
	\adot{(0.125, 0.125)};
	\adot{(0.375, 0.125)};
	\adot{(0.75, 0.125)};
	\end{tikzpicture}
	\ +\
	\begin{tikzpicture}[anchorbase]
	\draw[gmod] (0, -0.375) -- (0, -0.25) -- (0.25, 0) -- (0.5, 0.25);
	\draw[gmod] (0.25, -0.375) -- (0.25, -0.25) -- (0, 0) -- (0, 0.25) -- (0.25, 0.5);
	\draw[gmod] (0.75, -0.375) -- (0.75, -0.25) -- (0.75, 0) -- (0.5, 0.25) -- (0.25, 0.5) -- (0.25, 0.75);
	\adot{(0, -0.25)};
	\adot{(0.25, -0.25)};
	\adot{(0.75, -0.25)};
	\end{tikzpicture}
	\overset{\eqref{eq:brauerLieIdentity}}{=}
	\begin{tikzpicture}[anchorbase]
	\draw[gmod] (0.5, 0) -- (0.25, 0.25);
	\draw[gmod] (0, 0) -- (0.25, 0.25) -- (0.5, 0.5);
	\draw[gmod] (0.75, 0) -- (0.75, 0.25) -- (0.5, 0.5) -- (0.5, 0.75);
	\adot{(0.125, 0.125)};
	\adot{(0.375, 0.125)};
	\adot{(0.625, 0.375)};
	\adot{(0.375, 0.375)};
	\end{tikzpicture}
	\ +\
	\begin{tikzpicture}[anchorbase]
	\draw[gmod] (0, -0.375) -- (0, -0.25) -- (0.25, 0) -- (0.5, 0.25);
	\draw[gmod] (0.25, -0.375) -- (0.25, -0.25) -- (0, 0) -- (0, 0.25) -- (0.25, 0.5);
	\draw[gmod] (0.75, -0.375) -- (0.75, -0.25) -- (0.75, 0) -- (0.5, 0.25) -- (0.25, 0.5) -- (0.25, 0.75);
	\adot{(0.125, 0.375)};
	\adot{(0.375, 0.375)};
	\adot{(0.375, 0.125)};
	\adot{(0.625, 0.125)};
	\end{tikzpicture}
	\overset{\eqref{eq:brauerLieIdentity}}{=}
	\begin{tikzpicture}[anchorbase]
	\draw[gmod] (0.5, 0) -- (0.25, 0.25);
	\draw[gmod] (0, 0) -- (0.25, 0.25) -- (0.5, 0.5);
	\draw[gmod] (0.75, 0) -- (0.75, 0.25) -- (0.5, 0.5) -- (0.5, 0.75);
	\adot{(0.25, 0.25)};
	\adot{(0.5, 0.5)};
	\end{tikzpicture}
	\ +\
	\begin{tikzpicture}[anchorbase]
	\draw[gmod] (0, -0.375) -- (0, -0.25) -- (0.25, 0) -- (0.5, 0.25);
	\draw[gmod] (0.25, -0.375) -- (0.25, -0.25) -- (0, 0) -- (0, 0.25) -- (0.25, 0.5);
	\draw[gmod] (0.75, -0.375) -- (0.75, -0.25) -- (0.75, 0) -- (0.5, 0.25) -- (0.25, 0.5) -- (0.25, 0.75);
	\adot{(0.5, 0.25)};
	\adot{(0.25, 0.5)};
	\end{tikzpicture},
	\end{multline*}
	so $\begin{tikzpicture}[anchorbase]
	\draw[gmod] (0, 0) -- (0.25, 0.25) -- (0.25, 0.5);
	\draw[gmod] (0.5, 0) -- (0.25, 0.25);
	\adot{(0.25, 0.25)};
	\end{tikzpicture}$ satisfies \eqref{eq:jacobi}. A similar calculation shows that it satisfies \eqref{eq:skew}, and thus $\green{L}$ is indeed a Lie algebra.
	\details{\begin{equation*}
		\begin{tikzpicture}[anchorbase]
		\draw[gmod] (0.5, -0.5) -- (0, 0) -- (0.25, 0.25) -- (0.25, 0.5);
		\draw[gmod] (0, -0.5) -- (0.5, 0) -- (0.25, 0.25);
		\adot{(0.25, 0.25)};
		\end{tikzpicture}
		\overset{\eqref{eq:brauerLieIdentity}}{=}
		\begin{tikzpicture}[anchorbase]
		\draw[gmod] (0.5, -0.5) -- (0, 0) -- (0.25, 0.25) -- (0.25, 0.5);
		\draw[gmod] (0, -0.5) -- (0.5, 0) -- (0.25, 0.25);
		\adot{(0, 0)};
		\adot{(0.5, 0)};
		\end{tikzpicture}
		=
		\begin{tikzpicture}[anchorbase]
		\draw[gmod] (0.5, -0.5) -- (0, 0) -- (0.25, 0.25) -- (0.25, 0.5);
		\draw[gmod] (0, -0.5) -- (0.5, 0) -- (0.25, 0.25);
		\adot{(0.125, -0.375)};
		\adot{(0.375, -0.375)};
		\end{tikzpicture}
		\overset{\eqref{eq:skew}}{=}
		-\ \begin{tikzpicture}[anchorbase]
		\draw[gmod] (0, 0) -- (0.25, 0.25) -- (0.25, 0.5);
		\draw[gmod] (0.5, 0) -- (0.25, 0.25);
		\adot{(0.125, 0.125)};
		\adot{(0.375, 0.125)};
		\end{tikzpicture}
		\overset{\eqref{eq:brauerLieIdentity}}{=}
		-\ \begin{tikzpicture}[anchorbase]
		\draw[gmod] (0, 0) -- (0.25, 0.25) -- (0.25, 0.5);
		\draw[gmod] (0.5, 0) -- (0.25, 0.25);
		\adot{(0.25, 0.25)};
		\end{tikzpicture}.
		\end{equation*}}
	Next, we have that $\frac{1}{2}\left(\begin{tikzpicture}[anchorbase]
	\draw[-, thick] (0, 0) -- (0, 0.75);
	\draw[-, thick] (0.6, 0) -- (0.6, 0.2) arc(0:180:0.2) -- (0.2, 0);
	\end{tikzpicture}
	\ -\ 
	\begin{tikzpicture}[anchorbase]
	\draw[bmod] (0, 0) -- (0, 0.25) arc(180:0:0.25) -- (0.5, 0);
	\draw[bmod] (0.25, 0) -- (0.25, 0.25) to[out=90, in=270] (0, 0.5) -- (0, 0.75);
	\end{tikzpicture}\right) = \begin{tikzpicture}[anchorbase]
	\draw[-, thick] (0, 0) -- (0, 0.75);
	\draw[-, thick] (0.6, 0) -- (0.6, 0.2) arc(0:180:0.2) -- (0.2, 0);
	\end{tikzpicture} \circ \begin{tikzpicture}[anchorbase]
	\draw[gmod] (0, 0) -- (0, 0.5);
	\draw[bmod] (0.25, 0) -- (0.25, 0.5);
	\adot{(0, 0.25)};
	\end{tikzpicture}$\ , and hence it is a morphism from $\green{L} \otimes (\go, \id_{\go})$ to $(\go, \id_{\go})$. Lemma \ref{lem:orientedBrauerLieObject} tells us that $\left(\go, \begin{tikzpicture}[anchorbase]
	\draw[-, thick] (0, 0) -- (0, 0.75);
	\draw[-, thick] (0.6, 0) -- (0.6, 0.2) arc(0:180:0.2) -- (0.2, 0);
	\end{tikzpicture}\right)$ is a module for $\left(\go\go, \begin{tikzpicture}[anchorbase]
	\draw[gmod] (0, 0) -- (0.25, 0.25) -- (0.25, 0.5);
	\draw[gmod] (0.5, 0) -- (0.25, 0.25);
	\end{tikzpicture}\right)$. A straightforward calculation similar to the one above then shows that $\left(\left(\go, \id_{\go}\right),\ \frac{1}{2}\left(\begin{tikzpicture}[anchorbase]
	\draw[-, thick] (0, 0) -- (0, 0.75);
	\draw[-, thick] (0.6, 0) -- (0.6, 0.2) arc(0:180:0.2) -- (0.2, 0);
	\end{tikzpicture}
	\ -\ 
	\begin{tikzpicture}[anchorbase]
	\draw[bmod] (0, 0) -- (0, 0.25) arc(180:0:0.25) -- (0.5, 0);
	\draw[bmod] (0.25, 0) -- (0.25, 0.25) to[out=90, in=270] (0, 0.5) -- (0, 0.75);
	\end{tikzpicture}\right) \right)$ is an $\green{L}$-module. Applying the definition from Lemma \ref{lem:dualModule} immediately shows that this $\green{L}$-module is self-dual.
	\details{Write $\begin{tikzpicture}[anchorbase]
		\draw[gmod] (0, 0) -- (0.25, 0.25);
		\draw[bmod] (0.5, 0) -- (0.25, 0.25) -- (0.25, 0.5);	
		\end{tikzpicture} = \begin{tikzpicture}[anchorbase]
		\draw[bmod] (0, 0) -- (0, 0.75);
		\draw[bmod] (0.6, 0) -- (0.6, 0.2) arc(0:180:0.2) -- (0.2, 0);
		\end{tikzpicture}$ for the $\go\go$-action on $\go$ in $\B$, and $\begin{tikzpicture}[anchorbase]
		\draw[gmod] (0, 0) -- (0.25, 0.25);
		\draw[bmod] (0.5, 0) -- (0.25, 0.25) -- (0.25, 0.5);
		\adot{(0.25, 0.25)};
		\end{tikzpicture} = \begin{tikzpicture}[anchorbase]
		\draw[gmod] (0, 0) -- (0.25, 0.25);
		\draw[bmod] (0.5, 0) -- (0.25, 0.25) -- (0.25, 0.5);
		\adot{(0.125, 0.125)};
		\end{tikzpicture}$ for the $L$-action on $(\go, \id_{\go})$ in $\Kar(\B)$. We have:
		\begin{equation*}
		\begin{tikzpicture}[anchorbase]
		\draw[gmod] (0.25, 0) -- (0.5, 0.25);
		\draw[gmod] (0, 0) -- (0, 0.25) -- (0.25, 0.5);
		\draw[bmod] (0.75, 0) -- (0.5, 0.25) -- (0.25, 0.5) -- (0.25, 0.75);
		\adot{(0.5, 0.25)};
		\adot{(0.25, 0.5)};
		\end{tikzpicture}
		=
		\begin{tikzpicture}[anchorbase]
		\draw[gmod] (0.25, 0) -- (0.5, 0.25);
		\draw[gmod] (0, 0) -- (0, 0.25) -- (0.25, 0.5);
		\draw[bmod] (0.75, 0) -- (0.5, 0.25) -- (0.25, 0.5) -- (0.25, 0.75);
		\adot{(0, 0.125)};
		\adot{(0.375, 0.125)};
		\end{tikzpicture}
		\overset{\eqref{eq:lmod}}{=}
		\begin{tikzpicture}[anchorbase]
		\draw[gmod] (0.5, 0) -- (0.25, 0.25);
		\draw[gmod] (0, 0) -- (0.25, 0.25) -- (0.5, 0.5);
		\draw[bmod] (0.75, 0) -- (0.75, 0.25) -- (0.5, 0.5) -- (0.5, 0.75);
		\adot{(0.125, 0.125)};
		\adot{(0.375, 0.125)};
		\end{tikzpicture}
		\ +\
		\begin{tikzpicture}[anchorbase]
		\draw[gmod] (0, -0.375) -- (0, -0.25) -- (0.25, 0) -- (0.5, 0.25);
		\draw[gmod] (0.25, -0.375) -- (0.25, -0.25) -- (0, 0) -- (0, 0.25) -- (0.25, 0.5);
		\draw[bmod] (0.75, -0.375) -- (0.75, 0) -- (0.5, 0.25) -- (0.25, 0.5) -- (0.25, 0.75);
		\adot{(0, -0.25)};
		\adot{(0.25, -0.25)};
		\end{tikzpicture} 
		\overset{\eqref{eq:brauerLieIdentity}}{=} 
		\begin{tikzpicture}[anchorbase]
		\draw[gmod] (0.5, 0) -- (0.25, 0.25);
		\draw[gmod] (0, 0) -- (0.25, 0.25) -- (0.5, 0.5);
		\draw[bmod] (0.75, 0) -- (0.75, 0.25) -- (0.5, 0.5) -- (0.5, 0.75);
		\adot{(0.125, 0.125)};
		\adot{(0.375, 0.125)};
		\adot{(0.375, 0.375)};
		\end{tikzpicture}
		\ +\
		\begin{tikzpicture}[anchorbase]
		\draw[gmod] (0, -0.375) -- (0, -0.25) -- (0.25, 0) -- (0.5, 0.25);
		\draw[gmod] (0.25, -0.375) -- (0.25, -0.25) -- (0, 0) -- (0, 0.25) -- (0.25, 0.5);
		\draw[bmod] (0.75, -0.375) -- (0.75, 0) -- (0.5, 0.25) -- (0.25, 0.5) -- (0.25, 0.75);
		\adot{(0, 0.25)};
		\adot{(0.25, 0)};
		\end{tikzpicture}
		=
		\begin{tikzpicture}[anchorbase]
		\draw[gmod] (0.5, 0) -- (0.25, 0.25);
		\draw[gmod] (0, 0) -- (0.25, 0.25) -- (0.5, 0.5);
		\draw[bmod] (0.75, 0) -- (0.75, 0.25) -- (0.5, 0.5) -- (0.5, 0.75);
		\adot{(0.25, 0.25)};
		\adot{(0.5, 0.5)};
		\end{tikzpicture}
		\ +\
		\begin{tikzpicture}[anchorbase]
		\draw[gmod] (0, -0.375) -- (0, -0.25) -- (0.25, 0) -- (0.5, 0.25);
		\draw[gmod] (0.25, -0.375) -- (0.25, -0.25) -- (0, 0) -- (0, 0.25) -- (0.25, 0.5);
		\draw[bmod] (0.75, -0.375) -- (0.75, 0) -- (0.5, 0.25) -- (0.25, 0.5) -- (0.25, 0.75);
		\adot{(0.5, 0.25)};
		\adot{(0.25, 0.5)};
		\end{tikzpicture},
		\end{equation*}
		as desired.}
	The claims about $I_n'$-images follow from direct calculations; note that $I_n'\left(\go\go, \begin{tikzpicture}[anchorbase]
	\draw[gmod] (0, 0) -- (0.25, 0.25) -- (0.25, 0.5);
	\draw[gmod] (0.5, 0) -- (0.25, 0.25);
	\end{tikzpicture}\right)$ is isomorphic to $\gl_n$ equipped with the usual Lie bracket, and $I_n'\left(\begin{tikzpicture}[anchorbase]
	\draw[gmod] (0, 0) -- (0, 0.5);
	\adot{(0, 0.25)};
	\end{tikzpicture}\right)$ is projection onto $\so_n$.
	\details{Recall that $I_n'(\go) = \kk^n$, and that $I_n'(\go) \otimes I_n'(\go) \isom \gl_n$ via the map $v \otimes w \mapsto v w^\top$ (the outer product). We have $I_n'\left(\dcross\right)(v \otimes w) = w \otimes v$, so applying this isomorphism and extending linearly, we see that $I_n'\left(\dcross\right)$ is the transpose map $X \mapsto X^\top$ on $\gl_n$. We know that any given matrix $X \in \gl_n$ decomposes into the sum of a symmetric matrix and a skew symmetric matrix, say $X = A + B$. Then:
		\begin{equation*}
		I_n'\left(\begin{tikzpicture}[anchorbase]
		\draw[gmod] (0, 0) -- (0, 0.5);
		\adot{(0, 0.25)};
		\end{tikzpicture}\right)(X) 
		=
		\frac{1}{2}\left(X - X^\top \right)
		=
		\frac{1}{2}\left(A + B - A^\top - B^\top \right)
		=
		\frac{1}{2}(A + B - A + B)\\
		=
		B,
		\end{equation*}
		which is the skew-symmetric part of $X$, as desired.}
\end{proof}

Constructions similar to those in Lemma \ref{lem:unorientedBrauerObject} can be applied to many diagrammatic categories that are related to the representations of Lie algebras. Let $\g$ be a finite-dimensional Lie algebra in $\kkvec$ and $\rho \colon \g \to \gl(V)$ a faithful representation (for some finite-dimensional vector space $V$). Suppose that $\CC$ is a symmetric monoidal $\kk$-linear category containing dual objects $\goup$ and $\godown$, and that $F \colon \CC \to \g\dashmod$ is a functor such that $F(\goup) = V$. Suppose further that $e \colon \goup\godown \to \goup\godown$ is an idempotent endomorphism such that the extension of $F$ to $\Kar(\CC)$ maps $(\goup\godown, e)$ to $\im(\rho) \isom \g$ , i.e.\ the adjoint $\g$-module. Then, as in Example \ref{ex:orientedBrauerLieObject}, $F$ maps the Lie algebra
	$$\left(\left(\goup\godown, e \right), e \circ \left(\begin{tikzpicture}[anchorbase]
	\draw[->, thick] (0, 0) to[out=90, in=270] (0.25, 0.5) -- (0.25, 0.75);
	\draw[<-, thick] (0.25, 0) -- (0.25, 0.2) arc(180:0:0.125) -- (0.5, 0);
	\draw[<-, thick] (0.75, 0) to[out=90, in=270] (0.5, 0.5) -- (0.5, 0.75);
	\end{tikzpicture}
	\ -\
	\begin{tikzpicture}[anchorbase]
	\draw[->, thick] (0.5, -0.75) -- (0.5, -0.5) to[out=90, in=270] (0, 0) to[out=90, in=270] (0.25, 0.5) -- (0.25, 0.75);
	\draw[<-, thick] (0.75, -0.75) -- (0.75, -0.5) to[out=90, in=270] (0.5, 0) arc(0:180:0.125) to[out=270, in=90] (0, -0.5) -- (0, -0.75);
	\draw[<-, thick] (0.25, -0.75) -- (0.25, -0.5) to[out=90, in=270] (0.75, 0) to[out=90, in=270] (0.5, 0.5) -- (0.5, 0.75);
	\end{tikzpicture} \right) \circ (e \otimes e)\right)$$
	in $\Kar(\CC)$ to $\im(\rho)$ equipped with the Lie bracket inherited from $\gl(V)$, and $F$ maps the modules $\left(\left(\goup, \id_{\go}\right), \begin{tikzpicture}[anchorbase]
	\draw[->, thick] (0, 0) -- (0, 0.75);
	\draw[->, thick] (0.6, 0) -- (0.6, 0.2) arc(0:180:0.2) -- (0.2, 0);
	\end{tikzpicture} \circ (e \otimes \id_{\go}) \right)$ and $\left(\left(\godown, \id_{\go}\right), -\ \begin{tikzpicture}[anchorbase]
	\draw[->, thick] (0, 0) -- (0, 0.25) arc(180:0:0.25) -- (0.5, 0);
	\draw[<-, thick] (0.25, 0) -- (0.25, 0.25) to[out=90, in=270] (0.5, 0.5) -- (0.5, 0.75);
	\end{tikzpicture} \circ (e \otimes \id_{\go}) \right)$ to $V$ (the natural module for $\im(\rho)$) and its dual $V^*$, respectively. (Note that one needs to check that these definitions do in fact yield a Lie algebra and a pair of modules in $\Kar(\CC)$. For the cases discussed below, this follows easily from essentially the same arguments as in Lemma \ref{lem:unorientedBrauerObject}.)
	
For instance, one could take $\g$ to be the simple Lie algebra of type $F_4$, $\CC$ the $F_4$ diagrammatic category defined and studied in \cite{f4diagrammatics} (with parameters $\alpha = \frac{7}{3}, \delta = 26$), $\goup$ and $\godown$ to both be the generating object $\go$ of $\CC$, $F$ the functor $\Phi \colon \CC \to \g\dashmod$ discussed in \cite[\S5]{f4diagrammatics}, and $e$ to be the idempotent $e_1$ defined at the start of Section 6 of that paper (see \cite[Thm.~6.4]{f4diagrammatics}, and note that $V_{\omega_1}$ denotes the adjoint module). The constructions outlined above would then yield objects in $\CC$ corresponding to $\g$ and its (self-dual) natural module.
	
One could similarly apply these definitions to the $G_2$ diagrammatic category (with quantum parameter set to $q = 1$) studied in \cite{kuperbergG2-2} and \cite{kuperbergG2-1} to obtain objects corresponding to the simple Lie algebra of type $G_2$ and its natural module.
	
For a final family of examples, consider the oriented and unoriented Frobenius Brauer supercategories $\OB(A)$ and $\B(A)$ (see \cite{orientedFrobeniusBrauer} and \cite{diagrammaticsForRealSupergroups}), where $A$ is a Frobenius superalgebra -- the content of the present paper generalizes straightforwardly to the case of supercategories. The constructions outlined above would yield objects corresponding to the Frobenius superalgebras $\gl(m|n, A)$ and $\osp(m|2n, A)$, and their natural supermodules.

\bibliographystyle{alphaurl}
\bibliography{LieObjects}

\begin{thebibliography}{BCNR17}

\bibitem[BCNR17]{Brundan_2017}
Jonathan Brundan, Jonathan Comes, David Nash, and Andrew Reynolds.
\newblock A basis theorem for the affine oriented {B}rauer category and its
  cyclotomic quotients.
\newblock {\em Quantum Topol.}, 8(1):75--112, 2017.
\newblock \href {https://arxiv.org/abs/1404.6574} {\path{arXiv:1404.6574}},
  \href {https://doi.org/10.4171/QT/87} {\path{doi:10.4171/QT/87}}.

\bibitem[BR17]{drinfeldJTwistedYangians}
Samuel Belliard and Vidas Regelskis.
\newblock Drinfeld {J} presentation of twisted {Y}angians.
\newblock {\em Symmetry, Integrability and Geometry: Methods and Applications},
  March 2017.
\newblock URL: \url{http://dx.doi.org/10.3842/SIGMA.2017.011}, \href
  {https://doi.org/10.3842/sigma.2017.011} {\path{doi:10.3842/sigma.2017.011}}.

\bibitem[Bra37]{brauerAlgebras}
Richard Brauer.
\newblock On algebras which are connected with the semisimple continuous
  groups.
\newblock {\em Ann. of Math. (2)}, 38(4):857--872, 1937.
\newblock \href {https://doi.org/10.2307/1968843} {\path{doi:10.2307/1968843}}.

\bibitem[Bru17]{negligibleMorphisms}
Jonathan Brundan.
\newblock Representations of the oriented skein category, 2017.
\newblock \href {https://arxiv.org/abs/1712.08953} {\path{arXiv:1712.08953}}.

\bibitem[CFK10]{chariMapAlgebras}
Vyjayanthi Chari, Ghislain Fourier, and Tanusree Khandai.
\newblock A categorical approach to {W}eyl modules.
\newblock {\em Transform. Groups}, 15(3):517--549, 2010.
\newblock \href {https://arxiv.org/abs/0906.2014} {\path{arXiv:0906.2014}},
  \href {https://doi.org/10.1007/s00031-010-9090-9}
  {\path{doi:10.1007/s00031-010-9090-9}}.

\bibitem[CG05]{chariGreenstein}
Vyjayanthi Chari and Jacob Greenstein.
\newblock An application of free {L}ie algebras to polynomial current algebras
  and their representation theory.
\newblock In {\em Infinite-dimensional aspects of representation theory and
  applications}, volume 392 of {\em Contemp. Math.}, pages 15--31. Amer. Math.
  Soc., Providence, RI, 2005.
\newblock \href {https://arxiv.org/abs/math/0502409}
  {\path{arXiv:math/0502409}}, \href {https://doi.org/10.1090/conm/392/07350}
  {\path{doi:10.1090/conm/392/07350}}.

\bibitem[CP94]{chariPressley}
Vyjayanthi Chari and Andrew Pressley.
\newblock {\em A guide to quantum groups}.
\newblock Cambridge University Press, Cambridge, 1994.

\bibitem[CW12]{comesWilson}
Jonathan Comes and Benjamin Wilson.
\newblock Deligne's category $\text{Rep}({GL}_\delta)$ and representations of
  general linear supergroups.
\newblock {\em Representation Theory}, 16:568--609, 2012.
\newblock \href {https://arxiv.org/abs/1108.0652} {\path{arXiv:1108.0652}},
  \href {https://doi.org/10.1090/S1088-4165-2012-00425-3}
  {\path{doi:10.1090/S1088-4165-2012-00425-3}}.

\bibitem[Dri87]{drinfeldYangians}
Vladimir~G. Drinfeld.
\newblock A new realization of {Y}angians and of quantum affine algebras.
\newblock {\em Dokl. Akad. Nauk SSSR}, 296(1):13--17, 1987.

\bibitem[FL04]{feiginLoktev}
Boris Feigin and Sergey Loktev.
\newblock Multi-dimensional {W}eyl modules and symmetric functions.
\newblock {\em Comm. Math. Phys.}, 251(3):427--445, 2004.
\newblock \href {https://arxiv.org/abs/0212001v4} {\path{arXiv:0212001v4}},
  \href {https://doi.org/10.1007/s00220-004-1166-8}
  {\path{doi:10.1007/s00220-004-1166-8}}.

\bibitem[GSZ23]{f4diagrammatics}
Raj Gandhi, Alistair Savage, and Kirill Zainoulline.
\newblock Diagrammatics for ${F}_4$.
\newblock {\em Bulletin of the London Mathematical Society}, 55(1):349--374,
  2023.
\newblock \href {https://arxiv.org/abs/2107.12464} {\path{arXiv:2107.12464}},
  \href {https://doi.org/10.1112/blms.12733} {\path{doi:10.1112/blms.12733}}.

\bibitem[GV14]{LieMonads}
Isar Goyvaerts and Joost Vercruysse.
\newblock Lie monads and dualities.
\newblock {\em J. Algebra}, 414:120--158, 2014.
\newblock \href {https://arxiv.org/abs/1302.6869} {\path{arXiv:1302.6869}},
  \href {https://doi.org/10.1016/j.jalgebra.2014.05.021}
  {\path{doi:10.1016/j.jalgebra.2014.05.021}}.

\bibitem[Koi89]{koikeWalledBrauerAlgebras}
Kazuhiko Koike.
\newblock On the decomposition of tensor products of the representations of the
  classical groups: by means of the universal characters.
\newblock {\em Adv. Math.}, 74(1):57--86, 1989.
\newblock \href {https://doi.org/10.1016/0001-8708(89)90004-2}
  {\path{doi:10.1016/0001-8708(89)90004-2}}.

\bibitem[Kup94]{kuperbergG2-2}
Greg Kuperberg.
\newblock The quantum {$G_2$} link invariant.
\newblock {\em Internat. J. Math.}, 5(1):61--85, 1994.
\newblock \href {https://arxiv.org/abs/math/9201302}
  {\path{arXiv:math/9201302}}, \href
  {https://doi.org/10.1142/S0129167X94000048}
  {\path{doi:10.1142/S0129167X94000048}}.

\bibitem[Kup96]{kuperbergG2-1}
Greg Kuperberg.
\newblock Spiders for rank {$2$} {L}ie algebras.
\newblock {\em Comm. Math. Phys.}, 180(1):109--151, 1996.
\newblock URL: \url{http://projecteuclid.org/euclid.cmp/1104287237}, \href
  {https://arxiv.org/abs/q-alg/9712003} {\path{arXiv:q-alg/9712003}}.

\bibitem[Lau10]{lauMultiloop}
Michael Lau.
\newblock Representations of multiloop algebras.
\newblock {\em Pacific J. Math.}, 245(1):167--184, 2010.
\newblock \href {https://arxiv.org/abs/0811.2011} {\path{arXiv:0811.2011}},
  \href {https://doi.org/10.2140/pjm.2010.245.167}
  {\path{doi:10.2140/pjm.2010.245.167}}.

\bibitem[LZ15]{LZBrauerCategory}
Gustav Lehrer and Ruibin Zhang.
\newblock The {B}rauer category and invariant theory.
\newblock {\em J. Eur. Math. Soc. (JEMS)}, 17(9):2311--2351, 2015.
\newblock \href {https://arxiv.org/abs/1207.5889} {\path{arXiv:1207.5889}},
  \href {https://doi.org/10.4171/JEMS/558} {\path{doi:10.4171/JEMS/558}}.

\bibitem[MS23]{orientedFrobeniusBrauer}
Alexandra McSween and Alistair Savage.
\newblock Affine oriented {F}robenius {B}rauer categories.
\newblock {\em Communications in Algebra}, 51(2):742--756, 2023.
\newblock \href {https://arxiv.org/abs/2101.04582} {\path{arXiv:2101.04582}},
  \href {https://doi.org/10.1080/00927872.2022.2113401}
  {\path{doi:10.1080/00927872.2022.2113401}}.

\bibitem[Neh11]{extendedAffineLieAlgebras}
Erhard Neher.
\newblock Extended affine {L}ie algebras---an introduction to their structure
  theory.
\newblock In {\em Geometric representation theory and extended affine {L}ie
  algebras}, volume~59 of {\em Fields Inst. Commun.}, pages 107--167. Amer.
  Math. Soc., Providence, RI, 2011.
\newblock \href {https://arxiv.org/abs/1003.2352} {\path{arXiv:1003.2352}},
  \href {https://doi.org/10.1090/fic/059/05} {\path{doi:10.1090/fic/059/05}}.

\bibitem[NSS12]{equivariantMapAlgebras}
Erhard Neher, Alistair Savage, and Prasad Senesi.
\newblock Irreducible finite-dimensional representations of equivariant map
  algebras.
\newblock {\em Trans. Amer. Math. Soc.}, 364(5):2619--2646, 2012.
\newblock \href {https://arxiv.org/abs/0906.5189} {\path{arXiv:0906.5189}},
  \href {https://doi.org/10.1090/S0002-9947-2011-05420-6}
  {\path{doi:10.1090/S0002-9947-2011-05420-6}}.

\bibitem[Sav21]{stringDiagrams}
Alistair Savage.
\newblock String diagrams and categorification.
\newblock In {\em Interactions of quantum affine algebras with cluster
  algebras, current algebras and categorification---in honor of {V}yjayanthi
  {C}hari on the occasion of her 60th birthday}, volume 337 of {\em Progr.
  Math.}, pages 3--36. Birkh\"{a}user/Springer, Cham, 2021.
\newblock \href {https://arxiv.org/abs/1806.06873} {\path{arXiv:1806.06873}},
  \href {https://doi.org/10.1007/978-3-030-63849-8\_1}
  {\path{doi:10.1007/978-3-030-63849-8\_1}}.

\bibitem[SSS24]{diagrammaticsForRealSupergroups}
Saima Samchuck-Schnarch and Alistair Savage.
\newblock Diagrammatics for real supergroups.
\newblock {\em Annals of Representation Theory}, 1(2):125--191, 2024.
\newblock \href {https://arxiv.org/abs/2301.01414} {\path{arXiv:2301.01414}},
  \href {https://doi.org/10.5802/art.7} {\path{doi:10.5802/art.7}}.

\bibitem[Tur89]{turaevWalledBrauerAlgebras}
Vladimir~G. Turaev.
\newblock Operator invariants of tangles, and {$R$}-matrices.
\newblock {\em Izv. Akad. Nauk SSSR Ser. Mat.}, 53(5):1073--1107, 1135, 1989.
\newblock \href {https://doi.org/10.1070/IM1990v035n02ABEH000711}
  {\path{doi:10.1070/IM1990v035n02ABEH000711}}.

\bibitem[TV17]{monoidalCategoriesAndTopologicalFieldTheory}
Vladimir Turaev and Alexis Virelizier.
\newblock {\em Monoidal categories and topological field theory}, volume 322 of
  {\em Progress in Mathematics}.
\newblock Birkh\"{a}user/Springer, Cham, 2017.
\newblock \href {https://doi.org/10.1007/978-3-319-49834-8}
  {\path{doi:10.1007/978-3-319-49834-8}}.

\end{thebibliography}

\end{document}